\title{Asymptotics of eigensections on toric varieties}
\author{A.Huckleberry and H.Sebert\\
Appendix by D. Barlet}
\date{\today} \sloppy \parindent0ex \pagenumbering {arabic}
\theoremstyle{plain} \newtheorem{theorem} {Theorem}
[section]
\newtheorem{lemma} [theorem]{Lemma}
\newtheorem{proposition}[theorem]{Proposition}
\newtheorem{corollary} [theorem]{Corollary}
\theoremstyle{definition} \newtheorem*{definition} {Definition}
\newtheorem*{example} {Example}
\newtheorem*{remark} {Remark}
\newtheorem*{remarks} {Remarks}
\newcommand{\hq } {{/\kern -.185em/}}
\newcommand{\cupdot}{\,\,\dot\cup\,\,}
\begin{document}
\maketitle \noindent

%
%
%
% ABSTRACT
%
%

\begin {abstract}
\noindent Using exhaustion properties of invariant plurisubharmonic
functions along with basic combinatorial information on toric
varietes we prove convergence results for sequences of distribution
functions $\varphi _n=\vert s_N\vert / \Vert s_N\Vert_{L^2}$ for
sections $s_N\in \Gamma (X,L^N)$ approaching a semiclassical ray.
Here $X$ is a normal compact toric variety and $L$ is an ample line
bundle equipped with an arbitrary positive bundle metric which is
invariant with respect to the compact form of the torus. Our work
was motivated by and extends that of Shiffman, Tate and 
Zelditch \cite {STZ}.
\end {abstract}

\tableofcontents

\section{Notation and statement of results}\label{labSecTorusLifting}
Let us begin by describing the basic setting of this paper.
For this let $X$ be an $m$-dimensional connected normal compact
complex space equipped with an effective holomorphic action of
a complex torus $T\cong (\mathbb C^*)^m$. It follows that $T$
has a (unique, Zariski dense) open orbit $T.x_0$ where the
base point $x_0$ is fixed for the discussion. We consider a
very ample line bundle $\pi :L\to X$ to which the elements 
of $T$ can be lifted in the sense that for every $t\in T$
there is a holomorphic bundle mapping $\hat {t}:L\to L$
with $t\pi=\pi \hat {t}$.  If $\widehat T$ denotes the
group of bundle transformations which arise in this way, then
$\widehat {T}\cong (\mathbb C^*)^{m+1}$ and $\pi $ induces an
exact sequence $1\to \mathbb C^*\to \widehat {T}\to T\to 1$.  

\bigskip\noindent 
The group $\widehat {T}$ is naturally represented on the space
$\Gamma (X,L)$ of sections by $\hat {t}(s):=\hat {t}st^{-1}$.
Since $\widehat {T}\cong (\mathbb C^*)^{m+1}$, this representation is
completely reducible. Note that if $s_1,s_2\in \Gamma (X,L)$
are eigensections which transform by the same character, then
$s_1s_2^{-1}$ is a $T$-invariant meromorphic function on $X$ which, since
$T$-has an open orbit, is constant.  Hence the representation is
multiplicity-free.

\bigskip\noindent
In order to lift the $T$-action to $L$ we fix a base eigensection
$s_0\in \Gamma (X,L)$ and define the base point $1_{x_0}:=s(x_0)$ in the
fiber $L_{x_0}$ over the base point $x_0$ in the open $T$-orbit.
Now $\hat {t}(s_0)=\chi (\hat {t})s_0$ for some character
$\chi \in \mathfrak X(\widehat {T})$.  Thus $\mathrm {Ker}(\chi _0)$
is identified with $T$ by its orbit of $1_{x_0}$ which is mapped 
bijectively onto the open orbit in $X$.  We choose this lifting
of  $T$ as a group of bundle transformations, i.e.,
$T\cong \mathrm {Ker}(\chi _0)\hookrightarrow \widehat {T}$. 
Since $L$ is assumed to be very
ample and the associated embedding 
$\varphi _L:X\to \mathbb P(\Gamma (X,L)^*)$ is equivariant, it
follows from the fact that every holomorphic representation of
$T$ is algebraic that the $T$-action on $X$ is algebraic.
Consequently $X$ is a toric variety (see, e.g.,\cite{F} definitions
and basic results).

\bigskip\noindent
Now let $L$ be equipped with a smooth Hermitian bundle metric $h$
which is positive in the sense that for every local section
$s$ the function $-\log \vert s\vert_h^2$ is strictly plurisubharmonic.
A function on a complex space is said to be smooth if it
can be locally extended to a smooth function in a local embedding
space of $X$ in a complex manifold.  It is said to be strictly
plurisubharmonic if the extended function is strictly plurisubharmonic.
We also assume that $h$ is invariant with respect to the maximal
compact subgroup $T_\mathbb R$ of $T$.  This can be achieved by
averaging.  Of fundamental importance here 
is the $L_2$-norm  $\Vert s\Vert_h^2:=\int _X\vert s\vert^2_hd\lambda $.
The measure $d\lambda $ is chosen to be associated to the volume form
$\omega ^m$ of a K\"ahler metric.  The latter is defined 
on a covering $\{U_\alpha\}$ by strictly plurisubharmonic potential
functions $h_\alpha $ where the differences $h_\beta -h_\alpha $ are
pluriharmonic on the intersections $U_{\alpha\beta}$.  
Thus $\omega $ is locally the $(1,1)$-form 
$\omega _\alpha =\frac{i}{2}\partial \bar {\partial}h_\alpha$.
One checks that any two such measures are equivalent.

\subsubsection* {The weight lattice}
Having chosen $1=1_{x_0}$, in every isotypical component $V_\chi$ we
have a unique section $s$ with $s(x_0)=1$. The character $\chi $
defines $s$ on the open orbit by $t^{-1}s(t.x_0)=\chi (t)s(x_0)=\chi
(t)\cdot 1$. Since $s_0$ is $T$-invariant, it follows that
$s(t.x_0)=\chi (t)s_0(t.x_0)$. Consequently, having fixed $s_0$ all
other eigensections are determined by their characters and as a
result we turn to the space of characters on $T$.

\bigskip\noindent
Characters $\chi :T\to \mathbb C^*$
restrict to characters $\chi: T_{\mathbb R}\to S^1$ and conversely
such compact characters extend uniquely to characters of $T$. Thus
it is traditional to write a character as a compact character $\chi
=e^{2\pi i\alpha }$ where the linear function $\alpha \in \mathfrak
{t}^*_{\mathbb R}$ is required to take on integral values on the
kernel of $\mathrm {exp}:\mathfrak {t}_\mathbb R\to T_{\mathbb R}$.
Of course we implicitly also regard such linear functions as being
complex linear, i.e., in $\mathfrak {t}^*$ where they define the
complex characters $\chi $. We denote the space of such functions by
$\mathfrak {t}^*_{\mathbb Z}$ and refer to it as the (full) weight
lattice. We define the dual lattice $\mathfrak t_\mathbb Z$ by
\begin{equation}\label{labEqnDualLattice}
  \mathfrak t_\mathbb Z = \{ v \in \mathfrak t :
    u(v) \in \mathbb Z \text{ for all }
    u \in \mathfrak t^*_\mathbb Z \} .
\end{equation}
and obtain the pairing
\begin{equation}\label{labEqnLatticePairing}
  \langle \cdot, \cdot \rangle :
    \mathfrak t^*_\mathbb Z \times \mathfrak t_\mathbb Z
    \rightarrow \mathbb Z ,\quad
    \langle u, v \rangle = u(v) .
\end{equation}
Using the identification explained above, if the bundle $L$
is equipped with a lifting of the $T$-action, then $\Gamma (X,L)$ is
described as a $T$-representation space by the set of weights in
$\mathfrak {t}^*_{\mathbb Z}$ which lie in a certain polygonal
region which is defined by the geometry of $X$ as a $T$-space (see
$\S\ref{background}$).

\subsubsection* {Sequences of eigensections}

The purpose of our work is to explain certain asymptotic phenomena
for sequences $(s_N)$ of sections where $s_N\in \Gamma (X,L^N)$.
Having fixed the base point $s_0$ with $s_0(x_0)=:1$, we have the
base point $s_0^N$ for $\Gamma (X,L^N)$ with $s_0^N(x_0)=:1^N$. Thus
we have the correspondence between eigensections and linear
functions in $\mathfrak {t}^*_{\mathbb Z}$ at that level as well.
Recall this is given by using the lifting of the $T$-action via
$s_0^N$, noting that a given eigensection $s$ satisfies form
$s(tx_0)=\chi (t)s_0^N(tx_0)$ and expressing $\chi $ as $e^{2\pi
i\alpha }$. The corresondence is then defined by $s\mapsto \alpha $.

\bigskip\noindent
We wish to understand the asymptotic behavior of a sequence of
sections $(s_N)$ where the individual elements $s_N\in
\Gamma(X,L^N)$ are chosen so that the associated weights approximate
a ray $R(\xi):=\mathbb R^{\ge 0}.\xi $ defined by $\xi \in \mathfrak
{t}^*_{\mathbb Z}$. If $\alpha _N$ is the integral weight associated
to $s_N$, then one says that $(s_N)$ approximates $R(\xi)$ at
infinity if
\[
  \alpha _N=N.\xi + O(1)\,.
\]
It should be underlined that the entire discussion depends on the
choice of the lifting of the $T$-action to $L$. For example, if
the action is lifted via another base eigensection $\hat {s}_0$
which is associated to $s_0$ by the character $\chi _{\hat \alpha}$,
then a sequence $(s_N)$ approximates the ray $R(\xi)$ with respect
to the base section $s_0$ if and only if it approximates the
ray $R(\xi-\hat {\alpha})$ with respect to the base section 
$\hat {s}_0$.  

\bigskip\noindent
Our main result states that for every ray $R(\xi )$ and every
sequence $(s_N)$ which approximates $R(\xi)$ at infinity the
sequence 
$$
\vert \varphi _N\vert_h ^2:=\frac{\vert s_N\vert_h^2}{\ \ \Vert s_N\Vert_{L_2}}
$$
of probability densities
converges with precise estimates of both $\vert s_N\vert _h^2$ and
$\Vert s_N\Vert^2_{L_2}$ to the integration current of a 
certain $T_{\mathbb R}$-orbit $M$.  In fact $M$ is the set where
a certain canonically associated strictly plurisubharmonic function
$f:X\to \mathbb R^{\ge 0}\cupdot \{\infty \}$ takes on its minimum.
This function arises as follows.

\bigskip\noindent
Given $(s_N)$ one studies the strictly plurisubharmonic functions
$f_N:=-\frac{1}{N}\log \vert s_N\vert^2_h$.  It is a simple matter
to check that these converge (locally on compact subsets)
to a smooth strictly plurisubharmonic function $f$ on the open $T$-orbit.  
However, simple examples show that they do not necessarily 
converge on $X$, even outside the zero sets of the $s_N$.  
However, there exist tame
sequences $s_N'$ which approach the same ray at infinity so that
the polar sets of the associated functions $f_N'$ stabilize for
large $N$ as one ample divisor $Y$ with $f_N'\to f'$ uniformly on compact 
subsets on the complement $X\setminus Y$.  Since $f'=f$ on the
open orbit, we may define $f$ independent of the tame sequence
by continuation to $X$ simply by continuity. The role of $f$
is emphasized by the following result.
\begin{theorem}\label{labThmSPSF}
The smooth strictly plurisubharmonic function $f$ is an exhaustion
of $X\setminus Y$ which takes on its minimum exactly on the
$T_{\mathbb R}$-orbit $M$ which is a strong deformation retract 
of $X\setminus Y$.
\end{theorem}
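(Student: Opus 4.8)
The plan is to translate the statement into convex geometry on the cocharacter space and then read off the three assertions from the combinatorics of the moment polytope together with the standard homotopy type of an affine toric variety. On the open orbit $T.x_0\cong(\mathbb C^*)^m$ write a point as $\exp(x+i\theta).x_0$; since $h$ is $T_{\mathbb R}$-invariant and the $s_N$ are eigensections, $|s_N|_h^2$, hence $f_N$ and $f$, depend only on $x\in\mathfrak t_{\mathbb R}$, and strict plurisubharmonicity of $f$ is equivalent to strict convexity of the induced function $\phi$ on $\mathfrak t_{\mathbb R}$. Writing $h$ in the $T$-frame $s_0^N$ by a strictly convex potential $\psi$ with $\nabla\psi\colon\mathfrak t_{\mathbb R}\xrightarrow{\ \sim\ }\operatorname{int}P$ (suitably normalised), $P\subset\mathfrak t^*_{\mathbb R}$ being the moment polytope of $(X,L)$, whose normal fan is the fan $\Sigma$ of $X$, and using $\alpha_N/N\to\xi$, one gets $\phi(x)=\psi(x)-\langle\xi,x\rangle+c$ with $c$ the constant normalising $\min f=0$. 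The existence of sections of weight close to $N\xi$ forces $\xi\in P\cap\mathfrak t^*_{\mathbb Z}$; let $F=F_\xi$ be the face of $P$ with $\xi\in\operatorname{relint}F$, and $\sigma=\sigma_F\in\Sigma$ the dual cone, so $\sigma=N_P(\xi)$ is the normal cone of $P$ at $\xi$.

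I would first identify $X\setminus Y$ and prove the exhaustion property. For the tame sequence $s'_N$ the effective torus-invariant divisor $\operatorname{div}(s'_N)$ has coefficient along $D_\rho$ equal, after division by $N$, to a quantity tending to $\langle\xi,u_\rho\rangle+a_\rho\ge0$, which vanishes exactly when the ray generator $u_\rho$ lies in $\sigma$; hence that coefficient grows linearly in $N$ for $u_\rho\notin\sigma$ and can be kept constant for $u_\rho\in\sigma$, so the polar set of $f'_N$ is eventually $\bigcup_{u_\rho\notin\sigma}D_\rho$, i.e. $X\setminus Y=U_\sigma$, the affine toric variety of $\sigma$, with unique closed orbit $\mathcal O_\sigma$ of dimension $\dim F$. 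The recession function of $\psi$ is the support function $h_P$ of $P$, so $\lim_{t\to\infty}\phi(x+tv)/t=h_P(v)-\langle\xi,v\rangle\ge0$, with equality precisely for $v\in\sigma$; thus every sublevel set $\{\phi\le a\}$ is a closed convex set with recession cone $\sigma$, of the form $K_a+\sigma$ with $K_a$ compact. In $U_\sigma$ a tube over such a set $K+\sigma$ has compact closure, the $\sigma$-directions being exactly those along which $U_\sigma$ is compactified by the strata $\mathcal O_\tau$, $\tau\preceq\sigma$; since $\{f\le a\}$ is closed and contained in such a tube over the open orbit, it is compact in $X\setminus Y$, so $f$ is an exhaustion.

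To locate the minimum, note that $f$ attains $\min_{X\setminus Y}f$ (being an exhaustion), but not on an orbit $\mathcal O_\gamma$ with $\gamma\neq\sigma$: there $f$ is the logarithm of $\psi_\gamma-\langle\xi,\cdot\rangle$ for the strictly convex potential $\psi_\gamma$ of $h|_{\overline{\mathcal O_\gamma}}$, whose moment polytope is the face $F_\gamma\supsetneq F$, so $\xi$ lies in the relative boundary of $F_\gamma$, outside the image of $\nabla\psi_\gamma$, whence $\nabla\psi_\gamma-\xi$ never vanishes and $f|_{\mathcal O_\gamma}$ has no local minimum. Hence the minimum is attained in $\mathcal O_\sigma$, whose moment polytope is $F$ with $\xi$ in its interior, so $f|_{\mathcal O_\sigma}=\psi_\sigma-\langle\xi,\cdot\rangle+c_\sigma$ is strictly convex with a single minimum point $x_*$; therefore $\{f=\min f\}=M:=T_{\mathbb R}.x_*$, a real torus of dimension $\dim F$ (when $\sigma=\{0\}$, $\mathcal O_\sigma=T.x_0$ and $M$ is a real $m$-torus in the open orbit). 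Finally $\operatorname{Crit}(f)=M$: a critical point of $f$ restricts to one of $f$ on its orbit, and by the above the only such on $U_\sigma$ lie over $x_*$, i.e. on $M$, while $M$ consists of critical points since it is the global minimum.

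For the retraction, fix $v\in\operatorname{relint}(\sigma)\cap\mathfrak t_{\mathbb Z}$; the one-parameter subgroup $\lambda_v\colon\mathbb C^*\to T$, restricted to $\mathbb R_{>0}$, acts on $U_\sigma$, the limit $\lim_{r\to0^+}\lambda_v(r).p$ exists in $\mathcal O_\sigma$ for every $p\in U_\sigma$, and $\lambda_v(r)$ fixes $\mathcal O_\sigma$ pointwise since $v\in\operatorname{span}\sigma$. So $(p,r)\mapsto\lambda_v(r).p$, for $r\in(0,1]$ and extended by the limit at $r=0$, is a strong deformation retraction of $X\setminus Y=U_\sigma$ onto $\mathcal O_\sigma\cong(\mathbb C^*)^{\dim F}$; composing with the strong deformation retraction of $\mathcal O_\sigma$ onto $M$ given in logarithmic coordinates by the straight-line homotopy of $x$ to $x_*$ at fixed angular coordinate exhibits $M$ as a strong deformation retract of $X\setminus Y$. (Alternatively, the negative gradient flow of $f$ for a $T_{\mathbb R}$-invariant metric is complete, being the flow of an exhaustion, is $T_{\mathbb R}$-equivariant, and retracts $X\setminus Y$ onto $\operatorname{Crit}(f)=M$.) The step I expect to be the main obstacle is the analysis on the possibly singular lower strata of $U_\sigma$ in the third paragraph — ensuring the global minimum of $f$ descends all the way to $M\subset\mathcal O_\sigma$ and is attained nowhere else — which relies on the ``no critical point on a non-closed orbit'' computation together with the input (from the tame-sequence construction) that $f$ is continuous on all of $X\setminus Y$ and restricts to the logarithm of a strictly convex function along every invariant subvariety; once that is in hand, continuity of the explicit toric homotopy across the strata is routine.
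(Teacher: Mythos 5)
Your argument is correct in substance but follows a genuinely different route from the paper. The paper proves the theorem by abstract K\"ahlerian reduction: it shows $X\setminus Y$ is affine with unique closed orbit $\mathcal O_\tau$, establishes the product structure $X\setminus Y\cong\Sigma_\tau\times\mathcal O_\tau$, verifies that $f$ is a smooth $T_{\mathbb R}$-invariant strictly plurisubharmonic exhaustion (via the convexity Proposition \ref{convex exhaustion} and the closing-up Lemma \ref{closing up}), and then quotes Heinzner's theorem (Corollary 1, $\S5.4$ of \cite{H}) for ``the minimum set is a single $T_{\mathbb R}$-orbit in the closed $T$-orbit'' and \cite{HH2} (gradient flow of the moment-map norm) for the strong deformation retract. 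You instead work entirely in the Legendre-dual picture: $f$ becomes $\psi-\langle\xi,\cdot\rangle$ on $\mathfrak t_{\mathbb R}$, the exhaustion property is read off from the recession function $h_P-\langle\xi,\cdot\rangle$ vanishing exactly on $\sigma=N_P(\xi)$, the minimum is located by solving $\nabla\psi_\sigma=\xi$ on the closed orbit and excluding the other strata because $\xi\notin\operatorname{relint}F_\gamma$, and the retraction is the explicit toric contraction $\lambda_v(r)$, $v\in\operatorname{relint}\sigma$, followed by a straight-line homotopy. This is more elementary and more informative (it pins down $M$ as $(\nabla\psi_\sigma)^{-1}(\xi)$, which the paper only describes later in $\S$\ref{labSecexistence}), but it outsources the real work to two standard toric facts that you should at least cite or sketch: (i) for the given positive $T_{\mathbb R}$-invariant metric, the gradient of the potential of $h|_{\overline{\mathcal O_\gamma}}$ maps onto exactly $\operatorname{relint}F_\gamma$ for \emph{every} orbit closure, including singular ones --- this is your substitute for Heinzner's theorem and is where positivity of $h$ on all of $X$ (not just the open orbit) enters; and (ii) continuity of $(p,r)\mapsto\lambda_v(r).p$ up to $r=0$ on the possibly singular $U_\sigma$. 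Two small corrections: the hypothesis only forces $\xi\in P$, not $\xi\in P\cap\mathfrak t^*_{\mathbb Z}$ ($\xi$ may be irrational; nothing in your argument actually needs integrality), and for your parenthetical gradient-flow alternative you need the Bott--Morse property of $f$ along $M$ (proved in $\S$\ref{Morse} of the paper) to guarantee convergence of the flow lines.
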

The $T_{\mathbb R}$-orbit $M$ is contained in the closed $T$-orbit 
$\mathcal O_\tau$ in $X\setminus Y$. Both $Y$ and $\mathcal O_\tau$
are determined by $\xi $ by the combinatorial polyhedral geometry
associated to $X$ ($\S\ref{labSecTameSeq}$).  
The location of $M$ in $\mathcal O_\tau$
varies in a explicitly determined way as a function of $\xi $
and the (positive) metric $h$ ($\S\ref{labSecexistence}$). 

\bigskip\noindent
Normalizing $f$ such that $f|M = 0$ it follows that the sequence of
probabiliy densities ``localizes'' on $M$. This statement is made
precise in the following theorem

\begin{theorem}\label{labThmConvDirac}
Let $d\lambda$ be a probability measure on $X$. The sequence of
measures $|\varphi_N|_h^2 d\lambda$ converges to the Dirac measure
$\delta_M$ on $M$ in the weak sense. That is, we have
\[
  \int_X u \, |\varphi_N|_h^2 d\lambda \rightarrow 
    \int_M u \, dM
\]
for all continuous functions $u : X \rightarrow \mathbb R$.
\end{theorem}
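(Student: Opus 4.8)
The plan is to reduce Theorem 1.2 (weak convergence to $\delta_M$) to the two analytic inputs already established: the $C^\infty$ convergence $f_N \to f$ on compact subsets of $X \setminus Y$ together with the description of $f$ from Theorem 1.1 as a strictly plurisubharmonic exhaustion attaining its minimum precisely on $M$.

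First I would reformulate the measures. Writing $|\varphi_N|_h^2\, d\lambda = e^{-N f_N}\, d\lambda / \int_X e^{-N f_N}\, d\lambda$, one sees that $|\varphi_N|_h^2\, d\lambda$ is exactly the Gibbs measure associated to the Hamiltonian $f_N$ at inverse temperature $N$, and we are in a Laplace/Varadhan large-deviations situation. Since $f$ attains its minimum exactly on $M$ and that minimum is $0$ after normalization, the heuristic is that mass concentrates on $M$. To make this rigorous I would split $X$ into a small tubular neighbourhood $V_\varepsilon = \{f < \varepsilon\}$ of $M$ and its complement. On $X \setminus V_\varepsilon$ one needs the lower bound $f_N \ge \varepsilon/2$ for $N$ large; this requires a little care near $Y$, where the $f_N$ need not converge, but since $Y$ is contained in the zero set structure forced by the tame comparison sequence and $f$ itself is an exhaustion of $X \setminus Y$ (hence $f \to \infty$ approaching $Y$), the relevant sublevel sets $\{f_N \le \varepsilon\}$ stay in a fixed compact subset of $X \setminus Y$ where uniform convergence $f_N \to f$ holds — this is precisely the point of invoking the tame sequences and Theorem 1.1. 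Thus the contribution of $X \setminus V_\varepsilon$ to the numerator $\int_X u\, e^{-Nf_N}\, d\lambda$ is $O(e^{-N\varepsilon/2})$.

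Next I would analyze the contribution near $M$. On a fixed compact neighbourhood $K$ of $M$ in $X \setminus Y$ we have $f_N \to f$ uniformly with all derivatives, $f$ is strictly plurisubharmonic, $M = \{f = 0\}$ is a $T_\mathbb{R}$-orbit — in particular a smooth compact totally real submanifold of (real) dimension $\dim_{\mathbb R} M$, along which $f$ vanishes to second order with transverse Hessian controlled below and above by strict plurisubharmonicity. A Morse–Bott / Laplace-method estimate in transverse Fermi coordinates then gives, for any continuous $u$, that $\int_K u\, e^{-Nf_N}\, d\lambda$ is asymptotic to $c_N \int_M u\, dM$ for a normalizing constant $c_N$ (the transverse Gaussian integral), where $dM$ is the induced measure — after verifying that $dM$ matches the measure appearing in the statement, which follows because $d\lambda$ is equivalent to a Kähler volume and the Laplace expansion produces the Riemannian volume of $M$ up to the same constant that appears in the denominator, so it cancels. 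Taking the ratio with $u \equiv 1$ kills $c_N$ and the $O(e^{-N\varepsilon/2})$ tails, yielding $\int_X u\, |\varphi_N|_h^2\, d\lambda \to \int_M u\, dM$, and a standard $\varepsilon \to 0$ argument removes the auxiliary parameter. (One should check $\int_M dM = 1$ as a normalization convention, or absorb it into the definition of $\delta_M$.)

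The main obstacle I expect is the behaviour near the divisor $Y$: a priori the $f_N$ may blow up or oscillate there, so one cannot apply a naive Laplace estimate on all of $X$. The resolution is to use exactly the structure provided before the theorem — replacing $(s_N)$ by a tame sequence $(s_N')$ with stabilized polar set $Y$, using that $f'_N \to f'$ uniformly on compact subsets of $X \setminus Y$ and $f = f'$ — so that the relevant small sublevel sets of $f_N$ are confined to a compact set away from $Y$; but one must justify that replacing $(s_N)$ by $(s_N')$ does not change the limiting measure, which should follow because the two sequences differ (on the open orbit, hence everywhere by continuity) by bounded multiplicative factors $e^{O(1)}$ that do not affect the $\frac1N\log$ asymptotics, and because the $L^2$-normalization is insensitive to such factors in the limit. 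A secondary technical point is the Laplace asymptotics along a positive-dimensional critical manifold $M$ on a possibly singular $X$; this is handled by working in the smooth locus, noting $M \subset \mathcal O_\tau$ lies in the smooth part of $X \setminus Y$, and doing the estimate there.
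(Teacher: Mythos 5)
Your overall strategy -- localize to a sublevel neighbourhood of $M$ using the exhaustion/uniform-convergence structure, kill the complement by an $O(e^{-N\varepsilon})$ bound, run a Morse--Bott Laplace expansion transverse to $M$, and let the $L^2$-normalization cancel the Gaussian constant (your ratio with $u\equiv 1$ is exactly the paper's computation of $\Vert s_N\Vert^2\sim cN^{-\kappa}$) -- is the same as the paper's, including the reduction of a non-tame sequence to a tame one via bounded characters. But there is one genuine gap, at the step you dismiss as a ``secondary technical point.''

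You claim the Laplace asymptotics along $M$ can be carried out ``by working in the smooth locus, noting $M\subset\mathcal O_\tau$ lies in the smooth part of $X\setminus Y$.'' That is false in general. The product decomposition $X\setminus Y\cong \Sigma_\tau\times\mathcal O_\tau$ shows that $X$ is singular along \emph{all} of $\mathcal O_\tau$ (hence along $M$) whenever the slice $\Sigma_\tau$ is singular at its $T_\tau$-fixed point $x_0^\tau$ -- which happens for a general normal toric variety. The transverse integral you need is $\int_{\Delta_x} e^{-N(f-f(x))}$ over a neighbourhood $\Delta_x$ of the fixed point in the (possibly singular) analytic germ $\Sigma_\tau$, and here the standard Laplace method in Fermi coordinates does not apply: deleting the singular set does not change the integral but leaves you with no coordinate chart around the critical point in which to perform the Gaussian computation, and neither the exponent $N^{-\dim_{\mathbb C}\Delta_x}$ nor the existence and positivity of the limiting coefficient $c(x)$ is automatic. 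This is precisely why the paper includes Barlet's appendix (Theorem \ref{asympt.1} and Corollary \ref{key estimate}), which establishes the asymptotic expansion of $\int_X e^{-tf}\omega$ on a singular analytic set via the fiber integral $\int_{X\cap\{f=s\}}\omega/df$ and sandwiching a $\mathcal C^3$ Morse function between quadratic forms. Without this input (or a soft substitute, e.g.\ exploiting $T_{\mathbb R}$-invariance of all the data to identify any weak limit point of the concentrating measures with normalized Haar measure on the orbit $M$, which would avoid the precise transverse asymptotics altogether but which you do not argue), your transverse Gaussian step does not go through in the singular case.
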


For the proof of Theorem \ref{labThmSPSF} it is necessary to estimate
the pointwise asymptotic behavior of the probability densities. This
comes down to an analysis of the $L^2$-norms $\|s_N\|_{L^2}$. It
is only necessary to carry out the estimation locally near $M$.
This is possible because $f$ is in fact a Bott-Morse function
with its minimum on $M$. If $X$ is smooth, the relevant integrals
can be directly computed. In the singular case essentially the
same estimate holds, but the proof is more delicate (see the
appendix). The final result is of the form
\begin{equation}\label{labEqnPointwiseAsymp}
  |\varphi_N|_h^2 \sim N^\kappa e^{-Nf} .
\end{equation}
The speed of convergence, determined by the exponent $\kappa$,
depends on the position of $M$ in the stratification of $X$ given by
the $T$-action. Calculations are straightforward when $M$ lies in
the open and dense orbit $\mathcal O_0$. In this case $\kappa$
equals $\frac{1}{2}\dim X$. However, if $M$ lies in some boundary
component $\mathcal O_\tau$, the behavior is more subtle and more
technical effort is needed. Two problems arise: The singularities of
$X$ play a role and, as compared to the tame sequence, 
the sequence $(s_N)$ may show a irregular
behavior as it approaches the semiclassical limit given by $R(\xi)$.
In fact, as we will illustrate in an example later on, the pointwise
asymptotic behavior of $|\varphi_N|_h^2$ depends on $s_N$ and is not
uniquely determined by the ray $R(\xi)$.  

\bigskip\noindent
In $\S\ref{labSecStrictlyPSlimit}$ we derive a precise asymptotic formula for
tame sequences and discuss their relationship to arbitrary, possibly
non-tame sequences. It turns out that the tails of 
the distribution functions
defined by a tame sequence give upper estimates for the
orginal ones. This is the content of the following theorem:

\begin{theorem}\label{labThmDistFunc}
Let $(s_N)$ be a sequence of sections approximating a ray $R(\xi)$
at infinity and let 
$$
D_N(t):=\mathrm {Vol}\{x\in X; \vert \varphi_N\vert_h^2>t\}
$$
be the tail of the associated distribution function.  If 
$(s_N')$ is an associated tame sequence, then
\[
  D_N(t) \leq D_N'(t) \sim \Bigl( \frac{\log N}{N} \Bigr)^\kappa ,
\]
where $\kappa = \operatorname{codim} \mathcal O_\tau + \frac{1}{2}
\mathcal \dim \mathcal O_\tau$.
\end{theorem}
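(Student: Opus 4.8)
\textit{Overview.} The plan is to prove the two assertions of the theorem in turn: the sharp asymptotic $D_N'(t)\sim\bigl(\tfrac{\log N}{N}\bigr)^\kappa$ for a tame sequence, and then the comparison $D_N(t)\le D_N'(t)$ relating an arbitrary sequence to its tame companion. The first rests on the \emph{precise} uniform pointwise behaviour of $|\varphi_N'|_h^2$ near $M$ together with the Bott--Morse structure of $f$ from Theorem~\ref{labThmSPSF}; the second rests on the relation between arbitrary and tame sequences. Throughout one may assume, after averaging, that $d\lambda$ is $T_{\mathbb R}$-invariant; then the $T_{\mathbb R}$-invariant quantities $|\varphi_N|_h^2$, $f_N:=-\tfrac1N\log|s_N|_h^2$ and the distribution function $D_N$ are controlled on the quotient $X/T_{\mathbb R}$, which is a compactification of $\mathfrak t_{\mathbb R}=\mathbb R^m$ (the image of the open orbit).

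\textit{Asymptotics of $D_N'$.} By the analysis of tame sequences in \S\ref{labSecStrictlyPSlimit} — and, when $M$ meets the singular locus, the appendix — one has, uniformly on a fixed neighbourhood of $M$, an estimate $|\varphi_N'|_h^2\sim c\,N^\kappa e^{-Nf}$, with $f\ge 0$ normalised so that $f^{-1}(0)=M$. Since $f$ is continuous on $X\setminus Y$, positive off $M$, and an exhaustion, it is bounded below by a positive constant outside any neighbourhood of $M$; hence for each fixed $t>0$ and $N$ large the set $\{|\varphi_N'|_h^2>t\}$ lies in that neighbourhood and is squeezed between $\{f<r_N^-\}$ and $\{f<r_N^+\}$ with $r_N^\pm=\tfrac1N\bigl(\kappa\log N+O(1)\bigr)\sim\tfrac{\kappa\log N}{N}$. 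It therefore remains to compute $\operatorname{Vol}\{f<\varepsilon\}$ as $\varepsilon\to 0^+$. As $f$ is Bott--Morse with critical manifold $M$ one has $f\asymp\operatorname{dist}(\cdot,M)^2$ transversally to $M$, and the real codimension of $M$ in $X$ is $\dim_{\mathbb R}X-\dim_{\mathbb R}M=2m-\dim_{\mathbb C}\mathcal O_\tau=2\kappa$ (a $T_{\mathbb R}$-orbit in $\mathcal O_\tau\cong(\mathbb C^*)^{\dim_{\mathbb C}\mathcal O_\tau}$ having real dimension $\dim_{\mathbb C}\mathcal O_\tau$); so the Morse--Bott normal form and the coarea formula give $\operatorname{Vol}\{f<\varepsilon\}\sim C\varepsilon^{\kappa}$. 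Substituting $\varepsilon=r_N^\pm$ yields $D_N'(t)\sim C\kappa^\kappa\bigl(\tfrac{\log N}{N}\bigr)^\kappa$, which is the stated asymptotic up to the explicit positive constant. The last step is a Gaussian integral when $X$ is smooth near $M$; when $M$ meets the singular locus the slice transverse to $M$ is a (quotient) affine toric variety and one must invoke the asymptotic-expansion-of-integrals techniques of the appendix to see that $\operatorname{Vol}\{f<\varepsilon\}$ still grows like $\varepsilon^\kappa$, with no logarithmic correction.

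\textit{The comparison.} On $X/T_{\mathbb R}$ one has $|\varphi_N|_h^2=e^{-Nf_N}/\|s_N\|_{L^2}^2$ with $f_N$ strictly convex, so $D_N(t)$ is the value of the push-forward of $d\lambda$ on a sublevel set of $f_N$, and $D_N'(t)$ on one of $f_N'$. The inequality $D_N(t)\le D_N'(t)$ then follows from the relationship between the two sequences established in \S\ref{labSecStrictlyPSlimit}: the polar divisor of $f_N'$ is the stabilised ample divisor $Y$, the smallest occurring for $R(\xi)$, and once the respective $L^2$-normalisations are accounted for the sublevel set controlling $D_N$ has push-forward measure at most that of the one controlling $D_N'$. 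I expect this comparison to be the main obstacle. Since $f_N-f_N'$ is $N^{-1}$ times a uniformly bounded linear form, on any fixed compact set the two functions are uniformly close, so the difficulty is entirely near the boundary orbits and the singular locus — precisely where the pointwise asymptotics of $|\varphi_N|_h^2$ are not determined by $R(\xi)$ alone. There a non-tame sequence may make $s_N$ vanish to extra bounded orders along the $T$-invariant divisors through $M$, perturbing $f_N$ near $M$ by a term of order $N^{-1}\log(1/\operatorname{dist})$; one has to show that this extra vanishing is paid for by a corresponding decrease of $\|s_N\|_{L^2}^2$, so that the tail $D_N(t)$ does not exceed $D_N'(t)$ and in particular stays of size $\bigl(\tfrac{\log N}{N}\bigr)^\kappa$.
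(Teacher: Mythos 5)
Your treatment of the tame half is essentially the paper's own argument (\S\ref{labSecDistFunc}): localize to a $T_{\mathbb R}$-invariant neighbourhood of $M$, replace $|\varphi_N'|_h^2$ by $N^\kappa e^{-Nf}$ (Corollary \ref{labThmProbDensAsymp}), convert the superlevel condition into $f<\frac{\kappa\log N}{N}+O(N^{-1})$, and evaluate $\mathrm{Vol}\{f<\varepsilon\}\asymp\varepsilon^{\kappa}$ from the Bott--Morse normal form after dividing out the free $T'_{\mathbb R}$-action; your real-codimension count $2\kappa$ is exactly the one the paper uses. One correction of attribution: for this volume in the singular case the paper does not invoke the appendix but the classical bound $\mathrm{Vol}(A\cap\mathbb B(\rho))\asymp\rho^{\dim_{\mathbb R}A}$ for a pure-dimensional analytic set in a ball. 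The appendix (Corollary \ref{key estimate}) enters earlier, in Proposition \ref{integral estimate}, to show that the normalization $\|s_N'\|_{L^2}^2\sim cN^{-\kappa}$ carries no logarithmic correction --- a fact which your ``uniform estimate $|\varphi_N'|_h^2\sim cN^\kappa e^{-Nf}$'' already presupposes rather than proves.

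The genuine gap is the comparison $D_N(t)\le D_N'(t)$. You first assert that it ``follows from the relationship between the two sequences'' and then, candidly, concede that it is ``the main obstacle'' and that ``one has to show'' the extra vanishing of a non-tame $s_N$ near $M$ is compensated by the decrease of $\|s_N\|_{L^2}^2$; no such argument is supplied, so this half of the theorem is not proved. The mechanism the paper uses (\S\ref{arbitrary sequences} together with the remark after Proposition \ref{labThmDistFuncTame}) is concrete: one writes $s_N=m_N s_N'$ where $m_N=\chi_{\beta_N}$ with $\beta_N=\alpha_N-\alpha_N'$ ranging over a \emph{finite} set, each $m_N$ extending to a holomorphic function on $X\setminus Y$ vanishing to some order $d\ge 0$ along a $T$-invariant divisor whose closure contains $\mathcal O_\tau\supset M$; passing to subsequences on which $m_N\equiv m$ is constant, the localized norm $\|m s_N'\|^2$ acquires a factor $\asymp N^{-d}$ relative to $\|s_N'\|^2$, and one then compares the superlevel sets of $|\varphi_N|_h^2=|m|^2\,|\varphi_N'|_h^2\cdot\|s_N'\|^2/\|m s_N'\|^2$ with those of $|\varphi_N'|_h^2$. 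Be warned that an exact pointwise containment of superlevel sets --- which your phrasing ``the sublevel set controlling $D_N$ has push-forward measure at most that of the one controlling $D_N'$'' suggests --- is not available: already in Example \ref{labExample} (with $m=z$) the non-tame superlevel set is an annulus whose outer radius exceeds that of the tame disc by a term of order $\frac{\log\log N}{N}$, so the comparison can only be made at the level of the leading asymptotics $\bigl(\frac{\log N}{N}\bigr)^\kappa$. Carrying out that quantitative comparison is precisely the missing step.
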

For tame sequences, the precise nature of this asymptotic statement
and the methods of \cite{STZ} facilitate scaling the probability
distribution to obtain a universal probability distribution. This
will appear in \cite{S}. It should also be mentioned that for 
any given sequence the difference between $D_N(t)$ and $D'_N(t)$ is only of
finite order in $N$. 

\subsubsection*{Previous results}

In \cite{STZ} the authors derive formul\ae{} for the pointwise
asymptotic behavior of the probability densities $|\varphi_N|_h^2$
and the distribution functions $D_N(t)$ in the case of $X$ being
smooth and embedded in projective space with the bundle metric
$h$ being the restriction of the Fubini-Study metric. If the
localization manifold $M$ is located on the boundary of the open
orbit they only consider asymptotic sequences of a special type: The
element $\xi$ defining the ray $R(\xi)$ is assumed to be integral
and the asymptotic sequence of characters $\alpha_N$ is assumed to
be $\alpha_N = N\xi$. This is a special case of a tame sequence as
it is considered in the present paper.

\bigskip\noindent
In the more recent paper \cite{SZ} the authors also deal with the
smooth case. In particular, they derive asymptotic developments
of the value of $\vert \varphi_N\vert^2_N$ at the momentum map
preimage of $\frac{\alpha_N}{N}$.  This is carried out without 
reference to a particular choice of a sequence $(s_N)$ but instead
depends on the location of $\frac{\alpha_N}{N}$ in the momentum
map image $P$.  This is a delicate matter when $\frac{\alpha_N}{N}$
approaches a face of the boundary of $P$ (see $\S6.3$). In our relatively
simple considerations, a similar phenomenon arises. In our corresponding
situation where $M\subset \mathcal O_\tau$ and $\tau\not=0$, the
sequence $(s_N)$ may or may not be tame. If it is not tame one 
can not expect a universal scaled probability density.  It should be
remarked that in the smooth case the limiting function $f$ 
can be explicitly computed in the relevant local coordinants.
This done in the course of the work in \cite{SZ} (see also \cite{S}).

\bigskip\noindent
In \cite{BGW} the smooth case is also considered.  There the authors
take advantage of the Delzant construction which realizes $X$ as
a certain GIT-quotient of the set of stable points of a linear
torus action on $\mathbb C^d$.  The flat metric on the trivial bundle
on $\mathbb C^d$ defines a positive Hermitian metric on the push-down
of the trivial bundle. In the toric setting they derive explicit
formulas for the \emph{stability function} which compares the two
metrics. In a general situation (see $\S5$) they make a quantitative
comparison of the $L_2$-norms.  This allows them to apply previously
obtained results (Lemma 8.1) to derive a formula for 
the universal scaled probability distribution under special conditions
on $\alpha_N$ (see $\S8$).
\section {Preparation on toric varieties}\label {background}
Here we begin by presenting certain background information for dealing with
the combinatorial side of the theory of toric varieties.  Our main
observation is that associated to any ray there is 
a tame sequence. 

\subsubsection*{One-parameter subgroups}

Recall that by definition there is a open and dense $T$-orbit
$\mathcal O_0$ in $X$. In order to understand the structure of the 
boundary $\mathrm {bd}(\mathcal O_0)$ one considers 
algebraic 1-parameter subgroups $\lambda :\mathbb C^* \rightarrow T$.
In a systematic way one determines each \emph{boundary orbit} as
the orbit of a limit point
\begin{equation}\label{labEqnOneParameterGroup}
  x_0^\tau := \lim_{z \to 0} \lambda(z) x_0 \in X 
\end{equation}
for $\lambda $ appropriately chosen.  Many considerations
are in this way reduced to the 1-dimensional case. Since $T =
(\mathbb C^*)^m$, 1-parameter subgroups are in 1-1 correspondence
with integral vectors $v = (n_1, \ldots, n_m) \in \mathbb Z^m$
by setting
\[
  \lambda_v : \mathbb C^* \rightarrow T, \quad
    \lambda_v(z) = (z^{n_1}, \ldots, z^{n_m}) . 
\]
We therefore may regard the integral lattice $\mathfrak t_\mathbb Z$ 
defined in \eqref{labEqnDualLattice} as the space of 1-parameter 
subgroups of $T$. Using the pairing $\langle \cdot, \cdot
\rangle : \mathfrak t^*_\mathbb Z \times \mathfrak t_\mathbb Z
\rightarrow \mathbb Z$ from \eqref{labEqnLatticePairing} we
characterize one-parameter groups by
\[
  \chi_\alpha(\lambda_v(z)) = z^{\left< \alpha, v \right>}
\]
for a character $\chi_\alpha = e^{2\pi i \alpha}$ with $\alpha \in
\mathfrak t^*_\mathbb Z$.

\subsection{Orbit structure}\label{labSecTorusCombinatorics}

In the theory of toric varieties (see \cite{F}) one associates in a
one-to-one fashion to each $T$-orbit $\mathcal O \subset X$ a set
$\tau \subset \mathfrak t_\mathbb R$ called a \textit{strongly
convex rational polyhedral cone}. Such a cone is by definition the
set of all convex combinations of a set of integral vectors $v_1,
\ldots, v_r \in \mathfrak t_\mathbb Z$, called the
\textit{generators of $\tau$}, i.e.
\[
  \tau = \{ \lambda_1 v_1 + \cdots + \lambda_r v_r : \lambda_j \geq
    0\} =: \langle v_1, \dots, v_r \rangle_{\mathbb R^{\geq 0}} 
\]
such that $\tau \cap (-\tau) = \{0\}$. The vectors $v_j$ are chosen
in way such that the one-parameter groups defined by
$\mathrm{Int}(\tau) \cap \mathfrak t_\mathbb Z$ close up in the
orbit $\mathcal O_\tau$; that is, if $v \in \mathfrak t_\mathbb Z$
is in the relative interior of $\tau$, then $\lim_{z \to 0}
\lambda_v(z)x_0 = x_0^\tau$ is an element of $\mathcal O_\tau$. The
collection of all such cones $\tau$ constitute a \textit{fan
$\Sigma(X)$}. Fans and their exact relation to the orbit structure
of $X$ are studied in detail, e.g., in \cite{F}. What is important
for us in the following are two basic facts:

\begin{enumerate}

\item Each $T$-invariant complex hypersurface $Y_j$ corresponds to a
one-dimensional cone $\tau_j = \left<v_j\right>_{\mathbb R^{\geq
0}}$ with $v_j \in \mathfrak t_\mathbb Z$.

\item A cone $\sigma$ of maximal dimension corresponds to a
$T$-fixed point $x_\sigma$.

\end{enumerate}

\subsubsection*{Parameterization of eigensections}

Let $Y_1, \ldots, Y_\ell$ be the $T$-invariant hypersurfaces in $X$.
The base section $s_0$ chosen in \S{}\ref{labSecTorusLifting}
defines a $T$-invariant divisor
\begin{equation}\label{labEqnBaseDivisor}
  D = \sum_j a_j Y_j . 
\end{equation}
Of course $L=L(D)$. Each hypersurface $Y_j$ is given by a one-dimensional cone $\tau_j =
\left< v_j \right>_{\mathbb R^{\geq 0}}$ in the fan of $X$. We
define the set
\begin{equation}\label{labEqnPolytope}
  P_D = \cap_{j=1}^\ell \{u \in \mathfrak t^* :
    \langle u,v_j\rangle \ge -a_j\} .
\end{equation}
The following Proposition characterizes holomorphic eigensections in
terms of this polyhedron; it is standard in the theory of toric
varieties.

\begin{proposition}\label{labThmPolytopeSections}
The spaces of sections
$\Gamma(X, L^N)$ are given by
\[
  \Gamma(X, L^N) = 
    \oplus_\alpha \mathbb C s_\alpha ,
    \text{ where }\alpha \in NP_D \cap \mathfrak t^*_\mathbb Z 
\]
and the $s_\alpha $ are $T$-eigensections with 
$t(s_\alpha)=\chi_\alpha(t)s_\alpha$. Furthermore, if 
$D$ is ample, then $P_D$ is a strictly convex polytope.
It is bounded precisely when $X$ is compact.
\end{proposition}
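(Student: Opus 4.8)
The plan is to prove Proposition~\ref{labThmPolytopeSections} by combining the standard correspondence between $T$-eigensections of $L(D)$ and lattice points of $P_D$ with the elementary convexity/boundedness bookkeeping. Since the excerpt explicitly calls this result ``standard in the theory of toric varieties'', I would keep the argument brief and reference \cite{F}, but supply enough detail to be self-contained.

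First I would set up the correspondence on the open orbit. Fixing the base section $s_0$ with $s_0(x_0)=1$ identifies $L=L(D)$ with a specific linearization, and a global eigensection $s$ of $L^N$ transforming by $\chi_\alpha$ is uniquely determined on $\mathcal O_0$ by $s(t.x_0)=\chi_\alpha(t)s_0^N(t.x_0)$, i.e.\ by the monomial $\chi_\alpha$; the only question is which $\alpha\in\mathfrak t^*_{\mathbb Z}$ extend holomorphically across the invariant hypersurfaces $Y_1,\dots,Y_\ell$. This is a purely local computation along each $Y_j$: in a $T$-chart adapted to the one-dimensional cone $\tau_j=\langle v_j\rangle$ the section $s_0^N$ vanishes to order $Na_j$ along $Y_j$ and $\chi_\alpha$ has order $\langle\alpha,v_j\rangle$ along $Y_j$, so holomorphicity of $s_\alpha=\chi_\alpha s_0^N$ at the generic point of $Y_j$ is equivalent to $\langle\alpha,v_j\rangle+Na_j\ge0$, i.e.\ $\langle\alpha,v_j\rangle\ge -Na_j$. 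Running over all $j$ gives $\alpha\in NP_D\cap\mathfrak t^*_{\mathbb Z}$, and conversely such an $\alpha$ gives a section holomorphic in codimension one, hence (by normality of $X$ and Hartogs) holomorphic on all of $X$. Since distinct characters are linearly independent and the representation is multiplicity-free (as noted in \S\ref{labSecTorusLifting}), we get $\Gamma(X,L^N)=\bigoplus_{\alpha\in NP_D\cap\mathfrak t^*_{\mathbb Z}}\mathbb C s_\alpha$.

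Next I would handle the geometric statements about $P_D$. Strict convexity when $D$ is ample: $P_D$ is by construction an intersection of half-spaces, hence convex; ampleness of $D$ is equivalent to the combinatorial condition that the support function $u\mapsto\min_j(\langle u,v_j\rangle+a_j)$ be strictly concave with respect to the fan, which forces $P_D$ to be full-dimensional and each facet to meet a distinct maximal cone, so $P_D$ has no redundant inequalities and is a genuine polytope (no line segments in its boundary hyperplanes beyond facets); this is exactly the standard ampleness criterion in \cite{F}. For boundedness I would argue both directions: if $X$ is compact then its fan is complete, so the $v_j$ positively span $\mathfrak t_{\mathbb R}$, and a system of inequalities $\langle u,v_j\rangle\ge -Na_j$ whose normals positively span the space defines a bounded set; conversely, if $X$ is not compact its fan is not complete, so there is a nonzero $v$ lying outside the support of the fan, and by the ampleness/support-function description one checks $P_D$ contains a ray in a direction dual to such a $v$ (or more directly: the $v_j$ fail to positively span, so some half-space direction is unconstrained), hence $P_D$ is unbounded.

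The main obstacle, and the only point requiring genuine care, is the passage from ``holomorphic on $\mathcal O_0$ and across the generic points of the $Y_j$'' to ``holomorphic on all of $X$'' in the normal singular setting: the complement of $\mathcal O_0\cup\bigcup_j(Y_j)_{\mathrm{reg}}$ has codimension $\ge2$ in $X$, and since $X$ is normal a section of a line bundle defined off a codimension-$2$ set extends. I would invoke normality together with the second Riemann extension theorem (Hartogs), which is where the normality hypothesis on $X$ is actually used; everything else is the standard order-of-vanishing computation in the affine toric charts $U_\sigma=\operatorname{Spec}\mathbb C[\sigma^\vee\cap\mathfrak t^*_{\mathbb Z}]$ together with the elementary linear algebra of when finitely many half-spaces cut out a bounded (resp.\ strictly convex) region. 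Because this is textbook material I would compress the write-up accordingly and point to \cite{F} for the fan-theoretic details of the ampleness criterion.
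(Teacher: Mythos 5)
The paper does not actually prove this Proposition: it states it as ``standard in the theory of toric varieties'' and implicitly defers to \cite{F}, so there is no in-paper argument to compare against. Your sketch is precisely the standard Fulton-style proof, and the core of it is correct and complete in outline: the identification of a $\chi_\alpha$-eigensection of $L^N$ with the rational section $\chi_\alpha s_0^N$, the order-of-vanishing computation $\operatorname{ord}_{Y_j}(\chi_\alpha s_0^N)=\langle\alpha,v_j\rangle+Na_j$ giving exactly the inequalities defining $NP_D$, the appeal to normality to extend from the union of $\mathcal O_0$ and the generic points of the $Y_j$ across the codimension-two complement, and the multiplicity-freeness already noted in \S\ref{labSecTorusLifting} to get the direct sum decomposition. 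This is the right proof and the right level of detail for a result the authors themselves treat as citable.

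The one step I would not let pass as written is the converse of the boundedness claim. You argue ``$X$ not compact $\Rightarrow$ fan not complete $\Rightarrow$ the $v_j$ fail to positively span $\Rightarrow$ $P_D$ unbounded.'' The middle implication is false: a fan can fail to be complete while its rays still positively span $\mathfrak t_{\mathbb R}$ (e.g.\ delete the top-dimensional cones from the fan of $\mathbb P^2$, i.e.\ remove the three fixed points; the three rays remain and $P_D$ is still the bounded triangle). The correct characterization is that the recession cone of $P_D$ is the dual of the cone generated by the $v_j$, so boundedness is equivalent to the rays positively spanning, not to completeness of the fan. Since the paper assumes $X$ compact throughout, only the (correct) forward direction is ever used, so this does not damage anything downstream; but if you want to keep the ``precisely when'' clause you should either restrict to the class of fans for which completeness is equivalent to the rays spanning, or replace the completeness argument by the recession-cone argument and acknowledge the caveat.
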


\begin{remark}
Since in our case $L$ is very ample, one can explicitly construct
the Kodaira embedding of $X$ by using the polytope $P_D$. This is
intially given by 
$x\mapsto [s_\alpha(x)]$ where the $s_\alpha $ are the
distinguished $T$-eigensections in $\Gamma (X,L)$. Recalling that
$s_\alpha(tx_0)=\chi_\alpha (t)s_0(tx_0)$ on the open orbit,
the embedding can be written as $t(x_0)\mapsto [\chi_\alpha (t)]$.  
In other words $t(x_0)$ is mapped to $t[1:\ldots : 1]$ and 
$X$ is identified with the closure of the $T$-orbit $T.[1:\ldots :1]$.  
In \cite{STZ} toric varieties are actually defined this way. 
To ensure smoothness when taking the closure they require the polytope 
$P_D$ to be Delzant. \qed
\end{remark}

\subsection{Existence of asymptotic sequences}

As an application of Proposition \ref{labThmPolytopeSections} we
show that, given a ray $R(\xi)$ there exists a sequence of
holomorphic eigensections $s_N$ approximating $R(\xi)$ at infinity
if and only if $\xi$ is an element of the polytope $P_D$. The
neccessity can be proved as follows.
\begin{proposition}\label{labThmXiInPD}
If $(s_N)$ approximates $\mathbb R^{\geq 0}\xi$ at infinity, then 
$\xi \in P_D$.
\end{proposition}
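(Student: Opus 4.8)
The plan is to use the combinatorial description of sections from Proposition~\ref{labThmPolytopeSections} together with the fact that the polytope $P_D$ is closed (and, since $D$ is ample, bounded). Suppose $(s_N)$ approximates $R(\xi)$ at infinity, so that each $s_N$ corresponds to a weight $\alpha_N \in \Gamma(X,L^N)$, which by Proposition~\ref{labThmPolytopeSections} means $\alpha_N \in NP_D \cap \mathfrak t^*_{\mathbb Z}$, and moreover $\alpha_N = N\xi + O(1)$.

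First I would divide by $N$: the condition $\alpha_N \in NP_D$ is equivalent to $\tfrac{1}{N}\alpha_N \in P_D$, using that $P_D$ is a cone-intersection of half-spaces through \eqref{labEqnPolytope} and hence scales linearly. Concretely, $\langle \alpha_N, v_j\rangle \ge -Na_j$ for all $j$ is the same as $\langle \tfrac{1}{N}\alpha_N, v_j\rangle \ge -a_j$. Thus $\tfrac{1}{N}\alpha_N \in P_D$ for every $N$.

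Next I would pass to the limit. Since $\alpha_N = N\xi + O(1)$, we have $\tfrac{1}{N}\alpha_N = \xi + O(1/N) \to \xi$ as $N \to \infty$. The set $P_D$ is closed, being a finite intersection of closed half-spaces in $\mathfrak t^*_{\mathbb R}$; therefore the limit $\xi$ of the sequence $\tfrac{1}{N}\alpha_N \in P_D$ also lies in $P_D$. This completes the argument.

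I do not expect any serious obstacle here: the only points to be careful about are (i) that the existence of the sections $s_N$ forces the weights into $NP_D$, which is exactly the content of the cited Proposition, and (ii) that $P_D$ is closed, which is immediate from its defining inequalities. One might additionally remark that this already shows $\xi$ must lie in the \emph{rational} points are dense but $\xi$ itself need not be rational; the statement only asserts $\xi \in P_D$, which is what the closedness of $P_D$ delivers. (The converse direction---that every $\xi \in P_D$ is approximated by some sequence---is the substantive part and is treated separately.)
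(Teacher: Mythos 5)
Your proof is correct and is essentially the paper's argument: both reduce to the fact that holomorphicity forces $\alpha_N\in NP_D$, i.e.\ $\tfrac{1}{N}\alpha_N\in P_D$, and then pass to the limit $\tfrac{1}{N}\alpha_N\to\xi$ --- the paper just carries out the limit inequality-by-inequality $\langle\xi,v_j\rangle\ge -a_j$ rather than invoking closedness of $P_D$ abstractly. (The garbled final remark about rational points should be deleted, but it plays no role in the argument.)
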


\begin{proof}
From the definition of a sequence of characters approximating a ray
$R(\xi)$ we have $\alpha_N/N = \xi + O(N^{-1})$ . Since each $s_N$
is holomorphic, the corresponding character $\alpha_N$ must lie in
the polytope $NP_D$, whose defining equations are given by
\eqref{labEqnPolytope}. For the vector $\xi$ we now have
\[
  \langle \xi, v_j \rangle =
    \langle \frac{\alpha_N}{N}, v_j \rangle - 
      \langle \frac{\alpha_N}{N} - \xi, v_j\rangle
    \geq -a_j + O(N^{-1}) .
\]
The last inequality is true for all $N$, thus $\langle \xi, v_j
\rangle \geq -a_j$ for all $j$, and hence $\xi \in P_D$.
\end{proof}

The converse is also true but requires more effort.

\begin{proposition}\label{labThmAsympExistence}
If $\xi \in P_D$, then there exists a sequence $\alpha_N \in NP_D
\cap \mathfrak t^*_\mathbb Z$ such that $\alpha_N = N\xi + O(1)$ for
all $N \in \mathbb N$.
\end{proposition}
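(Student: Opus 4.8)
The plan is to construct the sequence $\alpha_N$ directly by rounding $N\xi$ into the polytope. The subtlety is that $N\xi$ itself need not be a lattice point and, more importantly, $N\xi$ may lie on or very near the boundary of $NP_D$, so naive rounding of each coordinate could push the point outside $P_D$ and destroy holomorphicity. To handle this, I would first reduce to the case where $\xi$ lies in the relative interior of some face $F$ of $P_D$: the cone of $P_D$ along $F$ is generated by a subset of the $v_j$, and the key is that $F$ itself, being a rational polytope (its vertices lie in $\mathfrak t^*_\mathbb Q$ since $P_D$ is defined by integral inequalities), contains lattice points in a controlled fashion.

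Concretely, I would argue as follows. Since $P_D$ is a rational polytope, there is a fixed positive integer $q$ so that $q$ times every vertex of $P_D$ lies in $\mathfrak t^*_\mathbb Z$; in particular for each residue class of $N$ modulo $q$ one can write $N = kq + r$ and split $N\xi$ accordingly. Write $\xi$ as a convex combination $\xi = \sum_i t_i w_i$ of the vertices $w_i$ of the minimal face $F$ containing $\xi$, and approximate each $t_i$ by a rational with denominator $N$; this produces a point $\beta_N \in F$ with $\beta_N = \xi + O(N^{-1})$ and with $N\beta_N$ close to a lattice point. The remaining discrepancy $N\beta_N - (\text{nearest lattice point})$ is a bounded vector, and because $\beta_N$ lies in the relative interior of $F$ (or we perturb slightly into it), there is room of order $O(1)$ — independent of $N$ — to move from $N\beta_N$ to an actual lattice point $\alpha_N$ without leaving $NP_D$. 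That gives $\alpha_N \in NP_D \cap \mathfrak t^*_\mathbb Z$ with $\alpha_N = N\xi + O(1)$.

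An alternative and perhaps cleaner route uses the cone over the polytope: form the cone $C = \mathbb R^{\geq 0}(P_D \times \{1\}) \subset \mathfrak t^* \times \mathbb R$, which is a rational polyhedral cone, and consider the semigroup $S = C \cap (\mathfrak t^*_\mathbb Z \times \mathbb Z)$. By Gordan's lemma $S$ is finitely generated, and $NP_D \cap \mathfrak t^*_\mathbb Z$ is exactly the slice of $S$ at height $N$. One then needs that the lattice points of $S$ are ``dense enough'' near the ray $\mathbb R^{\geq 0}(\xi,1)$: since $(\xi,1)$ lies in $C$, for $N$ large $N(\xi,1)$ is at bounded distance from $S$, with the bound governed by the generators of $S$ and independent of $N$. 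Extracting the $\mathfrak t^*$-component of such a nearby lattice point yields $\alpha_N$.

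The main obstacle is precisely the boundary behavior: when $\xi$ sits on a proper face $F$ of $P_D$, every valid $\alpha_N$ must also lie on (the dilate of) that face's supporting hyperplanes up to the slack $-a_j \le \langle \alpha_N, v_j\rangle$, and one must verify the rounding can be done while respecting all the active inequalities simultaneously. This is a lattice-point-in-rational-polytope statement whose uniformity in $N$ is the crux; I expect it to follow from the finite generation of the semigroup $S$ (equivalently, from writing $F$ with bounded-denominator vertices), but making the $O(1)$ bound genuinely uniform over all $N$ — and not merely for $N$ in a fixed residue class, which then has to be patched together — is where the care is needed.
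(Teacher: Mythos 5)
Your overall strategy---round $N\xi$ and control the boundary case via the minimal face $F$ containing $\xi$ in its relative interior---is the same in spirit as the paper's (Lemma \ref{labThmProjectionLemma}), and you have correctly located the crux. But the step you flag as the crux is a genuine gap as written. The claim that ``because $\beta_N$ lies in the relative interior of $F$ there is room of order $O(1)$ to move from $N\beta_N$ to a lattice point without leaving $NP_D$'' conflates two different kinds of room. Relative interiority of $\beta_N$ in $F$ gives room only \emph{inside the affine hull} $\mathrm{aff}(NF)$; it says nothing about the facet inequalities of $NP_D$ that are active along $NF$, and those are violated the instant one steps off $\mathrm{aff}(NF)$ to the wrong side. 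A nearest lattice point to $N\beta_N$ will in general lie off $\mathrm{aff}(NF)$ (for a proper face this is a proper affine subspace; and if the vertices genuinely had denominator $q>1$, then for $N\not\equiv 0\ (\mathrm{mod}\ q)$ the subspace $\mathrm{aff}(NF)$ could contain no lattice points at all), so neither the large distance of $N\beta_N$ to the relative boundary of $NF$ nor the $O(1)$ bound on the discrepancy prevents the rounded point from leaving $NP_D$. The same unproved density claim reappears verbatim in your Gordan's-lemma variant (``$N(\xi,1)$ is at bounded distance from $S$'' within the height-$N$ slice), so that route does not close the gap either.

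What closes it---and what the paper uses, explicitly in the vertex case of Lemma \ref{labThmProjectionLemma} and implicitly in the general case---is that the vertices of $P_D$ are themselves lattice points: since $L$ is a very ample line bundle, each vertex $\alpha_\sigma$ is the weight of the (by Lemma \ref{saturation} unique) eigensection not vanishing at the corresponding fixed point $x_\sigma$, hence lies in $\mathfrak t^*_{\mathbb Z}$ by Proposition \ref{labThmPolytopeSections}. Granting this, your own construction finishes at once and the final rounding step disappears: write $\xi=\sum_i t_i w_i$ with $w_i$ the (integral) vertices of $F$, choose nonnegative integers $m_i^{(N)}$ with $\sum_i m_i^{(N)}=N$ and $|m_i^{(N)}-Nt_i|\le C$, and set $\alpha_N:=\sum_i m_i^{(N)} w_i$. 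Then $\alpha_N\in\mathfrak t^*_{\mathbb Z}$, the point $\alpha_N/N$ is a convex combination of the $w_i$ and hence lies in $F\subset P_D$, and $\|\alpha_N-N\xi\|\le\sum_i |m_i^{(N)}-Nt_i|\,\|w_i\|=O(1)$. The paper instead rounds $N\xi$ to an arbitrary nearby lattice point and then applies an integral projection onto $\mathrm{aff}(F)$ through an integral vertex $\alpha_\sigma$, invoking relative interiority only to conclude that the projected point lands in $NF$ for large $N$; either way, integrality of the vertices is the indispensable input. (If one only wants $\alpha_N\in NP_D$ rather than $\alpha_N\in NF$, an alternative is to add to the rounded point a fixed lattice vector $u_0$ with $\langle u_0,v_j\rangle\ge 1$ for every facet inequality active at $F$, which exists because the tangent cone of the full-dimensional rational polytope $P_D$ along $F$ has nonempty rational interior---but again some explicit lattice input of this kind is needed.)
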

Since for every $\xi \in \mathbb R^m$ there exists a sequence
$(\alpha_N)$ of integral points with $\Vert N\xi-\alpha_N\Vert <1$, 
the proposition is a consequence of the following projection argument.
\begin{lemma}\label{labThmProjectionLemma}
Let $F$ be a face of the polytope $P_D$ and
$\xi$ be in the relative interior of $F$. If $\alpha_N \in \mathfrak
t^*_\mathbb Z$ is any integral sequence such that $\alpha_N = N\xi +
O(1)$, then there exists a sequence $\alpha_N'$ having that same
property but in addtion $\alpha_N'/N \in F \cap \mathfrak t^*_\mathbb
Z$ for almost all $N$.
\end{lemma}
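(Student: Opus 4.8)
The plan is to reduce the statement to a lattice-geometric fact about the affine span of the face $F$, and then project $\alpha_N$ onto that affine span in a way that respects the lattice $\mathfrak t^*_\mathbb Z$ up to bounded error. First I would record the structure of $F$: since $F$ is a face of the rational polytope $P_D$, its affine span $A_F$ is a rational affine subspace, and $F$ is cut out inside $A_F$ by those inequalities $\langle u, v_j\rangle \ge -a_j$ that are \emph{not} tight on $F$, while the tight ones $\langle u, v_j\rangle = -a_j$ (say for $j$ in an index set $J$) define $A_F$ itself. Because $\xi$ lies in the relative interior of $F$, it satisfies the tight equations exactly and the non-tight inequalities strictly, i.e. $\langle \xi, v_j\rangle > -a_j$ for $j\notin J$. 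The plan is to produce $\alpha_N'$ that exactly satisfies the tight equations (so that $\alpha_N'/N \in A_F$) while staying within $O(1)$ of $N\xi$; the strict inequalities for $j\notin J$ are then automatically satisfied for $N$ large because $\langle \alpha_N'/N, v_j\rangle = \langle \xi, v_j\rangle + O(N^{-1}) > -a_j$.

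Next I would carry out the projection. Let $W = \{w \in \mathfrak t^* : \langle w, v_j\rangle = 0 \text{ for all } j \in J\}$ be the linear subspace parallel to $A_F$. Since the $v_j$ are integral, $W$ is a rational subspace, so $W \cap \mathfrak t^*_\mathbb Z$ is a full-rank lattice in $W$ and spans $W$ over $\mathbb R$. The error vector $e_N := \alpha_N - N\xi$ is bounded, $\|e_N\| \le C$. I want to write $\alpha_N' = \alpha_N - r_N$ where $r_N \in \mathfrak t^*_\mathbb Z$ is a correction that removes the component of $e_N$ transverse to $A_F$; then $\alpha_N' - N\xi = e_N - r_N$ must be arranged to lie in $W$ and to remain bounded. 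Concretely, $\alpha_N$ satisfies $\langle \alpha_N, v_j\rangle = \langle N\xi, v_j\rangle + \langle e_N, v_j\rangle = -Na_j + \langle e_N, v_j\rangle$ for $j \in J$; since $\langle \alpha_N, v_j\rangle$ and $Na_j$ — wait, $a_j$ need not be integral, but $\langle N\xi, v_j\rangle = -Na_j$ only holds as a real equation, whereas $\langle \alpha_N, v_j\rangle \in \mathbb Z$. So the obstruction is to find $r_N \in \mathfrak t^*_\mathbb Z$ with $\langle r_N, v_j\rangle = \langle \alpha_N, v_j\rangle + Na_j$ for $j \in J$ and $\|r_N\|$ bounded. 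The right-hand side equals $\langle e_N, v_j\rangle$, which is bounded, but it need not be an integer for each $j$; however $\langle \alpha_N, v_j\rangle \in \mathbb Z$, and I will instead impose that $\langle \alpha_N', v_j\rangle$ equal the \emph{nearest} admissible value, using that $\{N\xi + w : w\in W\} \cap \mathfrak t^*_\mathbb Z$ — no. Let me restate: the set $A_F \cap \frac1N\mathfrak t^*_\mathbb Z$ may be empty for individual $N$, which is exactly why the ``almost all $N$'' hedge appears.

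So the sharper plan is this. The integer points of $NP_D$ lying on $N\cdot F$ are the $\alpha \in \mathfrak t^*_\mathbb Z$ with $\langle \alpha, v_j\rangle = -Na_j$ for $j\in J$. For this to have solutions we need $-Na_j \in \langle \mathfrak t^*_\mathbb Z, v_j\rangle = \mathbb Z$ (as $v_j$ is primitive integral one may assume $\langle \mathfrak t^*_\mathbb Z, v_j \rangle = \mathbb Z$), i.e. $Na_j \in \mathbb Z$ for all $j\in J$ — and since $F$ is a nonempty face of the rational polytope, there is a positive integer $q$ with $qF$ a lattice polytope in the appropriate sense, so the condition holds for all $N$ divisible by $q$, hence for ``almost all'' $N$ in the sense of arithmetic progressions; I would either adopt this reading or, cleaner, note that the proposition only needs $\alpha_N' \in \mathfrak t^*_\mathbb Z$ with $\alpha_N'/N \in F$, and absorb a bounded adjustment of $\xi$. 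Granting $N$ in the relevant progression: fix one solution $\beta_N$ (chosen linearly in $N$, e.g. $\beta_N = N\beta_q/q$ for a base solution $\beta_q$ on $qF$, rounded), so $\langle \beta_N, v_j\rangle = -Na_j$ for $j\in J$ and $\beta_N = N\xi_0 + O(1)$ for some $\xi_0 \in A_F$. Then $\alpha_N - \beta_N \in \mathfrak t^*_\mathbb Z$ and $\langle \alpha_N - \beta_N, v_j\rangle = \langle e_N, v_j\rangle + \langle N(\xi-\xi_0), v_j\rangle$; the second term vanishes since both $\xi,\xi_0 \in A_F$, so $\alpha_N - \beta_N$ lies in $W \cap \mathfrak t^*_\mathbb Z$ up to the bounded error $e_N$. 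Now project $\alpha_N - \beta_N$ orthogonally onto $W$: since $W\cap\mathfrak t^*_\mathbb Z$ is a full lattice in $W$, there is $w_N \in W\cap \mathfrak t^*_\mathbb Z$ with $\|(\alpha_N-\beta_N) - w_N\|$ bounded (the transverse part of $\alpha_N - \beta_N$ is $O(1)$ and the parallel part can be matched to within one fundamental domain of $W\cap\mathfrak t^*_\mathbb Z$). Set $\alpha_N' := \beta_N + w_N \in \mathfrak t^*_\mathbb Z$. Then $\langle \alpha_N', v_j\rangle = -Na_j$ for $j\in J$, so $\alpha_N'/N \in A_F$ and, by the strict inequalities noted above, $\alpha_N'/N \in F$ for large $N$; and $\alpha_N' - N\xi = (\beta_N - N\xi_0) + (w_N - (\alpha_N - \beta_N)) + (\alpha_N - N\xi) + N(\xi_0 - \xi)$, every term $O(1)$, so $\alpha_N' = N\xi + O(1)$.

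The main obstacle I anticipate is precisely the integrality/divisibility point: the half-space data $a_j$ are rational, not integral, so $N\cdot F$ contains lattice points only for $N$ in a suitable arithmetic progression, which forces the ``for almost all $N$'' clause and requires a little care to choose the base solution $\beta_N$ linearly in $N$ so that the bounded-error bookkeeping goes through uniformly. The rest — that a rational subspace's integer points form a full lattice, and that orthogonal projection onto it incurs only bounded error — is routine lattice geometry. I would also remark that the interior-of-$F$ hypothesis is used only to get the strict inequalities for $j \notin J$, which is what upgrades membership in $A_F$ to membership in $F$ for large $N$.
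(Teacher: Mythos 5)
Your argument is correct and follows essentially the same route as the paper: anchor at a vertex $\alpha_\sigma$ of $F$ and project the bounded error onto the rational linear span of $F-\alpha_\sigma$, using the relative-interior hypothesis only to upgrade membership in the affine span to membership in $F$ for large $N$. The one loose end you leave open --- whether $N\cdot F$ meets the lattice only for $N$ in an arithmetic progression --- does not arise here: $D=\sum a_jY_j$ is an integral divisor and $P_D$ is a lattice polytope (its vertices are the characters of the sections not vanishing at the fixed points), so a vertex $\alpha_\sigma$ of $F$ lies in $\mathfrak t^*_{\mathbb Z}$ and $\beta_N=N\alpha_\sigma$ is an admissible base point for \emph{every} $N$; hence ``almost all $N$'' means all but finitely many, as the downstream use for tame sequences requires. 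Your covering-radius argument for choosing $w_N\in W\cap\mathfrak t^*_{\mathbb Z}$ is in fact a cleaner justification of the integrality of $\alpha_N'$ than the explicit projection formula in the paper.
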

\begin{proof}
Suppose $\operatorname{codim} F = 0$. Then $\xi$ lies in the
interior of $P_D$ which is open. Since the sequence $\alpha_N/N$
converges to $\xi$, the point $\alpha_N/N$ will lie in the interior
of $P_D$ for big $N$ and we can simply set $\alpha_N' = \alpha_N$.
The other extreme case is $\operatorname{codim} F = \dim P_D$, which
means that $F$ is a vertex of $P_D$. In this case $\xi$ is integral
and we can set $\alpha_N' = N\xi$. The remaining case is
characterized by $0 < \operatorname{codim} F < \dim P_D$. Here 
we replace $\alpha_N/N$ by its projection onto the the face
$F$. For this choose integral vectors $w_1, \ldots, w_k$ and a
vertex $\alpha_\sigma$ of $F$ such that $F \subset \alpha_\sigma +
\mathrm{span}_{\mathbb R}\{w_1, \ldots, w_k\}$ and define the
projection
\[
  \alpha_N' = N(\alpha_\sigma + \sum_{j=1}^k \langle 
    \frac{\alpha_N}{N} - \alpha_\sigma, w_j \rangle w_j).
\]
By writing $\xi = \alpha_\sigma + \sum_j \langle \xi -
\alpha_\sigma, w_j\rangle w_j$ one immediately sees
\[
  \|\frac{\alpha_N'}{N} - \xi\| \leq \frac{C}{N} 
    \sum_{j=1}^k \|w_j\| = O(\frac{1}{N}) .
\] 
Hence, the sequences $\alpha_N$ and $\alpha_N'$ approximate the same
ray $\mathbb R^{\geq 0}\xi$. Furthermore, since $\xi$ lies in the
relative interior of $F$, the elements $\alpha_N'/N$ will also be in
$F$ for almost all $N$.
\end{proof}
\section {Strictly plurisubharmonic limit
functions}\label{labSecStrictlyPSlimit}

The goal of this section is to prove the existence of a certain strictly
plurisubharmonic limit function $f : X \rightarrow \mathbb R \cupdot
\{\infty\}$ which is canonically associated to a semiclassical ray 
$R(\xi)$. This function provides the main tool for studying the 
asymptotic behavior of the probability densities $|\varphi_N|_h^2$ and 
the tails of the distribution functions $D_N(t)$.

\bigskip\noindent
On the open orbit we may simply define $f = \lim_{N \to \infty}
-\frac{1}{N} \log|s_N|_h^2$. However, in order to extend $f$ to 
$X$ we need to know more about the behavior of the $s_N$ at
the boundary. The following example shows that this might be
quite irregular.

\begin{example}\label{labExample}
Let $X=\mathbb P_1$ and $L=H$ be the hyperplane section bundle
equipped with its standard metric $h$. If $[z_0:z_1]$ are standard
homogeneous coordinates and sections $s\in \Gamma (X,L)$ are
regarded as linear functions $\ell (z_0,z_1)$, then
\[
  \vert s\vert^2_h=\frac{\vert \ell (z_0,z_1)\vert^2}
    {\vert z_0\vert^2+\vert z_1\vert ^2}\,.
\]
More generally if we equip $L^N$ which the associated tensor power
metric and a section $s\in \Gamma (X,L^N)$ is represented by a
homogeneous polynomial $P$ of degree $N$, then
\[
  \vert s\vert^2=\frac{\vert P(z_0,z_1)\vert^2}
    {(\vert z_0\vert^2+\vert z_1\vert ^2)^N}\,.
\]
Let us begin with the sequence defined by $s_N=z_0^N$. It
corresponds to the sequence of characters $\alpha_N = 0$ for all
$N$. For the probability density we must compute the 
integral
\[
  \int_{\mathbb P_1}\vert s_N\vert^2=
    \int_{\mathbb C}\frac{1}{(1+\vert z\vert^2)^N}=
    \int_0^\infty \frac{1}{(1+r^2)^N}rdr 
    \sim \frac{1}{N-1}\,.
\]
If we replace $s_N$ by the sequence defined by the homogeneous
polynomials $z_0^{N-1}z_1$, belonging to the sequence of characters
$\alpha_N = 1$ for all $N$, then
\[
  \int_{\mathbb P_1}\vert s_N\vert^2=
    \int_{\mathbb C}\frac{\vert z\vert^2}{(1+\vert z\vert^2)^N}
    \sim \frac{1}{(N-1)(N-2)}\,.
\]
Thus the integrals are asymptotically different. The
situation is even worse if we allow the sections to jump around like
in the following example:
\[
  s_N = \begin{cases}
    z_0^N & \text{for } N \text{ odd}, \\
    z_1 z_0^{N-1} & \text{otherwise} .
  \end{cases}
\]
Nevertheless, in all of the above cases the probability density
$\vert s_N\vert^2/\Vert s_N\Vert^2_{L_2}$ converges in measure to
the Dirac measure of the point $[1:0]$.\qed
\end{example}

\subsection{Tame sequences}\label{labSecTameSeq}

We can avoid the kind of problems illustrated above by replacing the
sequence $(s_N)$ by a new sequence $(s_N')$ that approximates
the same ray, but whose vanishing orders at the boundary
can be better controlled. For the construction of $(s_N)$ we
first note that the asymptotic vanishing order $\mathrm{ord}_{Y_j}(s_N)$
along a boundary hypersurface $Y_j$ is well-defined and completely
determined by $R(\xi)$.
\begin{lemma}\label{labThmExistenceOfkj}
There exist non-negative real numbers $k_1, \dots, k_\ell$ such that
for each boundary component $Y_j$ we have
\begin{equation}\label{labEqnExistenceOfkj}
 \lim_{N \to \infty} \frac{1}{N} \mathrm{ord}_{Y_j}(s_N) = k_j .
\end{equation} 
\end{lemma}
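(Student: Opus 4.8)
The plan is to reduce the statement to the combinatorics of the polytope $P_D$ via Proposition \ref{labThmPolytopeSections}. The vanishing order of an eigensection $s_\alpha$ along the boundary hypersurface $Y_j$ — which corresponds to the one-dimensional cone $\tau_j = \langle v_j\rangle_{\mathbb R^{\geq 0}}$ — is computed by pairing against the primitive generator $v_j$. More precisely, for an eigensection $s_\alpha$ with weight $\alpha \in \mathfrak t^*_{\mathbb Z}$, the divisor of $s_\alpha$ (as a section of $L^N = L(ND)$) is $\mathrm{div}(s_\alpha) = \sum_j (\langle \alpha, v_j\rangle + N a_j) Y_j$, where $D = \sum_j a_j Y_j$ is the base divisor from \eqref{labEqnBaseDivisor}. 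Hence
\[
  \mathrm{ord}_{Y_j}(s_N) = \langle \alpha_N, v_j \rangle + N a_j ,
\]
and this is exactly the nonnegative quantity whose nonnegativity is encoded in the defining inequalities of $NP_D$ in \eqref{labEqnPolytope}. First I would record this identity carefully, checking the normalization of $v_j$ (primitive generator) so that $\mathrm{ord}_{Y_j}$ really is the integer multiplicity and not a rational multiple of it.

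Given the identity, the limit is immediate: since $(s_N)$ approximates $R(\xi)$ at infinity we have $\alpha_N = N\xi + O(1)$, so
\[
  \frac{1}{N}\,\mathrm{ord}_{Y_j}(s_N)
    = \frac{1}{N}\bigl(\langle \alpha_N, v_j\rangle + N a_j\bigr)
    = \langle \xi, v_j\rangle + a_j + O(N^{-1})
    \longrightarrow \langle \xi, v_j\rangle + a_j =: k_j .
\]
Nonnegativity of $k_j$ follows from Proposition \ref{labThmXiInPD}: since $\xi \in P_D$, the inequality $\langle \xi, v_j\rangle \geq -a_j$ holds for every $j$, so $k_j = \langle \xi, v_j\rangle + a_j \geq 0$. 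This gives the explicit formula $k_j = \langle \xi, v_j\rangle + a_j$ for the constants, which also makes transparent the later assertion that the $k_j$ — and hence the divisor $Y$ where $\mathrm{ord} > 0$ stabilizes for a tame sequence — are determined purely by $\xi$ and the combinatorics of the fan.

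I expect the only real obstacle to be bookkeeping rather than substance: one must be careful that $\mathrm{ord}_{Y_j}$ is the order of vanishing of the \emph{section} $s_N$ of $L^N$ along $Y_j$ (which a priori depends on a choice of local trivialization of $L^N$ near the generic point of $Y_j$), and confirm that with the standard toric trivialization this coincides with $\langle \alpha_N, v_j\rangle + N a_j$ as stated in, e.g., \cite{F}. A secondary subtlety is that some $Y_j$ may fail to meet the locus where the analysis takes place, or that $s_N$ might vanish identically on some chart; but since each $s_N$ is a genuine eigensection with weight in $NP_D \cap \mathfrak t^*_{\mathbb Z}$, its order along any $Y_j$ is a well-defined nonnegative integer and the argument above applies verbatim. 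No analytic input (metrics, plurisubharmonicity) is needed here — this lemma is entirely combinatorial and sets up the construction of the tame sequence in which these asymptotic orders are realized exactly for all large $N$.
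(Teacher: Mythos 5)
Your proposal is correct and follows essentially the same route as the paper: both reduce the lemma to the identity $\mathrm{ord}_{Y_j}(s_N)=\langle\alpha_N,v_j\rangle+Na_j$ and pass to the limit using $\alpha_N=N\xi+O(1)$, arriving at $k_j=\langle\xi,v_j\rangle+a_j\ge 0$. The only difference is that you quote the standard toric divisor formula directly, whereas the paper justifies it on the spot by factoring $s_N=s_{N,\sigma}s_\sigma^N$ with $s_\sigma$ the eigensection of a vertex $\alpha_\sigma$ of the facet $H_j$ (so $s_\sigma$ does not vanish on $Y_j$) and reading off the order of the equivariant function $s_{N,\sigma}$ from its character.
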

\begin{proof}
A $T$-invariant hypersurface $Y_j$ is determined by a
1-dimensional cone in the fan of $X$. Such a cone is generated by
a vector $v_j \in \mathfrak t_\mathbb Z$. Thus, $Y_j$ determines a
1-codimensional face $H_j$ of the polytope $P_D$ in the following
way (see also \eqref{labEqnPolytope})
\begin{equation}\label{labEqnFacet}
  H_j = P_D \cap \{u \in \mathfrak t^* : 
    \langle u, v_j \rangle = -a_j\} .
\end{equation}
Choosing a vertex $\alpha_\sigma \in H_j$ it follows that the
corresponding eigensection $s_\sigma$ does not identically vanish on
$Y_j$. Here we write $s_N =
s_{N, \sigma} s_\sigma^N$ where $s_{N,\sigma}$ is a $T$-equivariant
holomorphic function transforming by the character $\chi_N
\chi_\sigma^{-N}$. Its order along $Y_j$ is given by $\langle
\alpha_N - N\alpha_\sigma, v_j \rangle$. Since the sequence
$\alpha_N/N$ converges to $\xi$ we have
\begin{equation}\label{labEqnAsympOrder}
  \begin{split}
    \frac{1}{N} \mathrm{ord}_{Y_j}(s_N) = 
      \langle \frac{\alpha_N}{N} - \alpha_\sigma, v_j \rangle 
      \rightarrow \langle \xi - \alpha_\sigma, v_j \rangle \\ 
      = \left< \xi, v_j \right> + a_j
      =: k_j .
  \end{split}
\end{equation}
This proves the existence of the numbers $k_j$ for any sequence
$s_N$ approximating the ray $R(\xi)$ at infinity.
\end{proof}

\begin{proposition}\label{labThmTame}
There exists a sequence $(s_N')$ such that if $k_j = 0$, then
$\mathrm{ord}_{Y_j}(s_N') = 0$ for almost all $N \in \mathbb N$.
\end{proposition}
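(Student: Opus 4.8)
The plan is to start from an arbitrary sequence $(s_N)$ approximating $R(\xi)$ and to surgically remove the "extra" vanishing along those hypersurfaces $Y_j$ for which the asymptotic order $k_j$ (produced by Lemma \ref{labThmExistenceOfkj}) is zero, while not disturbing the character sequence by more than $O(1)$. Concretely, fix a hypersurface $Y_j$ with $k_j=0$. By \eqref{labEqnAsympOrder} this means $\langle \xi, v_j\rangle = -a_j$, i.e.\ $\xi$ lies on the facet $H_j$ of $P_D$ cut out by $\langle u, v_j\rangle = -a_j$. The point is that $\xi$ sits in the relative interior of some face $F \subseteq H_j$ (the unique face whose relative interior contains $\xi$), and every such $F$ lies in the intersection of exactly those facets $H_j$ with $k_j = 0$. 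So the strategy is: first apply Lemma \ref{labThmProjectionLemma} to replace $(s_N)$ by a sequence whose characters satisfy $\alpha_N / N \in F$ for almost all $N$; then for such $N$ one automatically has $\langle \alpha_N/N, v_j\rangle = -a_j$ exactly, hence $\langle \alpha_N, v_j\rangle = -Na_j$, hence $\mathrm{ord}_{Y_j}(s_N') = \langle \alpha_N - N\alpha_\sigma, v_j\rangle = 0$ for every $j$ with $k_j = 0$.

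In more detail, the steps I would carry out are the following. First, identify the minimal face $F$ of $P_D$ with $\xi \in \mathrm{relint}(F)$; this is well-defined since the faces of a polytope containing a given point in their relative interior are totally ordered and have a least element. Second, observe the set-theoretic fact that $F = \bigcap_{j : \langle \xi, v_j\rangle = -a_j} H_j$; a point of $P_D$ lies in $F$ iff it satisfies with equality exactly the facet inequalities that $\xi$ satisfies with equality, and by \eqref{labEqnAsympOrder} the indices $j$ with $\langle \xi, v_j\rangle = -a_j$ are precisely those with $k_j = 0$. Third, invoke Lemma \ref{labThmProjectionLemma}: since $\xi \in \mathrm{relint}(F)$ and $(\alpha_N)$ is the integral character sequence of $(s_N)$ with $\alpha_N = N\xi + O(1)$, there is a sequence $(\alpha_N')$ with the same asymptotics and with $\alpha_N'/N \in F$ for almost all $N$. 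Let $s_N' := s_{\alpha_N'}$ be the corresponding eigensections from Proposition \ref{labThmPolytopeSections}; these are honest holomorphic sections of $L^N$ because $\alpha_N' \in NP_D \cap \mathfrak t^*_{\mathbb Z}$ (note $\alpha_N'$ is integral by construction in the lemma and lies in $NF \subseteq NP_D$). Fourth, compute the vanishing order: for $N$ large, using the same decomposition $s_N' = s_{N,\sigma} s_\sigma^N$ as in Lemma \ref{labThmExistenceOfkj} with $\alpha_\sigma$ a vertex of $H_j$ lying on $F$, one gets $\mathrm{ord}_{Y_j}(s_N') = \langle \alpha_N' - N\alpha_\sigma, v_j\rangle = N\langle \alpha_N'/N, v_j\rangle + Na_j = N(-a_j) + Na_j = 0$ whenever $k_j = 0$. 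This is the desired conclusion.

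One technical wrinkle to address: the vertex $\alpha_\sigma \in H_j$ used in the order computation must also lie on $F$ (so that $\langle \alpha_\sigma, v_j\rangle = -a_j$ is consistent and the bookkeeping goes through simultaneously for all the relevant $j$). Since $F$ is itself a polytope (a face of $P_D$), it has at least one vertex $\alpha_\sigma$, and every vertex of $F$ is a vertex of $P_D$; moreover such a vertex lies on every facet $H_j$ with $j$ among our index set, because $F \subseteq H_j$ for those $j$. So choosing a single vertex $\alpha_\sigma$ of $F$ handles all the hypersurfaces with $k_j = 0$ at once. With this choice, the computation in the previous paragraph is valid for every such $j$ with a single uniform $\sigma$.

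\textbf{Main obstacle.} The genuinely substantive input is Lemma \ref{labThmProjectionLemma} (already proved), which does the integral-point projection onto the face $F$; everything else here is the bookkeeping translating "$\alpha_N'/N$ lies exactly on the facet $H_j$" into "$\mathrm{ord}_{Y_j}(s_N') = 0$", plus the elementary polytope fact that the minimal face containing $\xi$ in its relative interior is exactly the intersection of the facets active at $\xi$, which by \eqref{labEqnAsympOrder} are the facets indexed by $\{j : k_j = 0\}$. So I expect the proof to be short, with the only care needed being to quote Lemma \ref{labThmProjectionLemma} with the right face $F$ and to choose the vertex $\alpha_\sigma$ inside $F$ so that the order computation is simultaneously valid for all $j$ with $k_j = 0$.
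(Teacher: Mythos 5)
Your proposal is correct and follows essentially the same route as the paper: identify the face $F$ of $P_D$ cut out by the facets $H_j$ with $k_j=0$ (equivalently, the facets on which $\xi$ lies), note that $\mathrm{ord}_{Y_j}(s_N')=0$ for those $j$ is exactly the condition $\alpha_N'/N\in F$, and invoke Lemma \ref{labThmProjectionLemma} to produce such a sequence. The extra care you take in writing out the order computation with a vertex $\alpha_\sigma$ chosen on $F$ itself is bookkeeping the paper leaves implicit, but it is the same argument.
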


\begin{proof}
From equation \eqref{labEqnAsympOrder} we have $k_j = \langle \xi,
v_j \rangle + a_j$, where the $a_j$ are the coefficients defining
$P_D$, see \eqref{labEqnPolytope}. Thus, $k_j = 0$ if and only if
$\xi$ lies in the hyperplane $H_j$ defined by equation
\eqref{labEqnFacet}. A collection $\{H_j\}$ of such hyperplanes
determines a face of the polytope
\begin{equation}\label{labEqnFace}
  \xi \in F = P_D \cap H_1 \cap \ldots \cap H_r .
\end{equation}
Hence, the condition that $\mathrm{ord}_{Y_j}(s_N) = 0$ whenever
$k_j = 0$ is equivalent to saying that $\alpha_N$ is an element of
the same face as $\xi$. By Lemma \ref{labThmProjectionLemma} there
exists a sequence $\alpha_N'$ approximating the same ray $R(\xi)$
but having in addition the property $\alpha_N' \in F \cap \mathfrak
t_\mathbb Z^*$ for allmost all $N$. This proves the assertion.
\end{proof}

\begin{definition}
A sequence $(s_N)$ which approximates a ray at infinity
said to be \emph{tame} if it fulfills the properties of Proposition
\ref{labThmTame}. The union $Y$ of the
hypersurfaces $Y_j$ with $k_j>0$ is called the \emph{limiting support} of
the sequence.
\end{definition}

\begin{remarks}
1. If $\xi$ lies in the interior of the polytope $P_D$,
then every asymptotic sequence approximating the ray $R(\xi)$ is
tame. 2. If $\alpha'_N$ is the tame sequence constructed from a given
one $\alpha_N$, then the difference $\alpha'_N - \alpha_N$ is
uniformly bounded in $N$.
\end{remarks}

By inspecting the proofs of Lemma \ref{labThmExistenceOfkj} and
Proposition \ref{labThmTame} we can give a description of the
limiting support $Y$ of a tame sequence in terms of the polytope
$P_D$.

\begin{proposition}
If $\{H_j\}$ is the set of all 1-codimensional faces of $P_D$ which
do not contain $\xi$ and $\{Y_j\}$ is the set of associated
$T$-invariant hypersurfaces, then the limiting support of a tame sequence
is given by $Y = \cup_j Y_j$.
\end{proposition}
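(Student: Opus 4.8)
The plan is to read off the identity $Y=\bigcup_j Y_j$ directly from the correspondence between boundary hypersurfaces, one-codimensional faces of $P_D$, and the vanishing numbers $k_j$ established in Lemma~\ref{labThmExistenceOfkj} and Proposition~\ref{labThmTame}. First I would recall that by \eqref{labEqnAsympOrder} we have $k_j=\langle\xi,v_j\rangle+a_j$, and that the one-codimensional face $H_j$ associated to $Y_j$ is cut out by $\langle u,v_j\rangle=-a_j$ as in \eqref{labEqnFacet}. Hence $k_j=0$ precisely when $\langle\xi,v_j\rangle=-a_j$, i.e.\ precisely when $\xi\in H_j$; equivalently, $k_j>0$ exactly when $\xi\notin H_j$ (the inequality $\langle\xi,v_j\rangle\ge-a_j$ always holds since $\xi\in P_D$ by Proposition~\ref{labThmXiInPD}, so strict inequality means $k_j>0$).

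Next I would invoke the definition of the limiting support: for a tame sequence $(s_N')$, by Proposition~\ref{labThmTame} we have $\mathrm{ord}_{Y_j}(s_N')=0$ for almost all $N$ whenever $k_j=0$, and $\mathrm{ord}_{Y_j}(s_N')\to\infty$ (more precisely $\tfrac1N\mathrm{ord}_{Y_j}(s_N')\to k_j>0$) when $k_j>0$. So $Y$, being by definition the union of those $Y_j$ with $k_j>0$, is exactly the union of the $Y_j$ whose associated facet $H_j$ does not contain $\xi$. Combining with the previous paragraph, $Y=\bigcup\{\,Y_j : \xi\notin H_j\,\}$, which is the assertion.

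The proof is therefore essentially a matter of unwinding definitions and citing the two earlier results; there is no real obstacle. The one point that deserves a sentence of care is the equivalence ``$k_j=0\iff\xi\in H_j$'': since $H_j=P_D\cap\{\langle u,v_j\rangle=-a_j\}$ rather than the full affine hyperplane, one must note that $\xi\in P_D$ already (Proposition~\ref{labThmXiInPD}), so lying on the affine hyperplane is the same as lying on the facet $H_j$. With that observation in place the chain of equivalences $k_j>0\iff\langle\xi,v_j\rangle>-a_j\iff\xi\notin H_j$ is immediate, and the description of $Y$ follows at once.
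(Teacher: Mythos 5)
Your argument is correct and is exactly the unwinding the paper intends: the text gives no separate proof, stating only that the proposition follows ``by inspecting the proofs'' of Lemma \ref{labThmExistenceOfkj} and Proposition \ref{labThmTame}, which is precisely the chain $k_j=\langle\xi,v_j\rangle+a_j$, $k_j>0\iff\xi\notin H_j$ (using $\xi\in P_D$), combined with the definition of the limiting support as $\bigcup\{Y_j:k_j>0\}$. Your extra remark distinguishing the facet $H_j$ from the full affine hyperplane is a worthwhile point of care but does not change the route.
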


% \bigskip\noindent
% {\bf Guide to the paper.} From now until $\S\ref {arbitrary sequences}$
% we restrict our considerations to tame sequences.  In that
% section we return to an arbitrary sequence and prove the desired
% results in that general setting.\qed 

\subsubsection* {On the complex geometry of $\mathrm {X\setminus Y}$}
If $(s_N)$ is a tame sequence with asymptotic support $Y$,
then, since $L$ is ample, the $T$-invariant open set $X\setminus Y$
is an affine variety. Since $T$ has an open orbit in $X\setminus Y$,
it follows it possesses only the constant invariant holomorphic functions
and as a result it possesses a unique closed orbit $\mathcal O_\tau$.

\bigskip\noindent
Let us explain how $\xi $ determines $\mathcal O_\tau $ in the
combinatorial language of toric varieties. For this recall that the face
$F$ is defined by the condition that $\xi $ is in its relative interior.
In the following way $F$ is associated to a cone $\tau $ in the fan
$\Sigma (X)$. By definition, $F$ is the 
intersection of $P_D$ with a set of supporting hyperplanes. That is, if $I
\subset \{1, \ldots, \ell\}$ is an index set, then $F$ is defined by
\begin{equation}\label{labEqnFaceDef}
  F = P_D \cap \cap_{i \in I} H_i 
    \quad\text{where}\quad
    H_i = \{ u \in \mathfrak {t}^*_{\mathbb R}: \langle u, v_i \rangle = -a_i \} .
\end{equation}
Since $D$ is very ample, the vectors $v_k$ with $k \in I$ define a
cone $\tau$ in the fan of $X$.  This cone, regarded as a fan, defines
the affine toric variety $X\setminus Y$. In particular, the relative
interior of $\tau $ corresponds to the closed (dimension-theoretically
minimal) orbit in $X\setminus Y$. \qed

\bigskip\noindent
Again in the setting of a tame sequence we observe that for 
$N$ sufficiently large the functions $f_N$ are $T_{\mathbb R}$-invariant 
strictly plurisubharmonic exhaustions of $X\setminus Y$. In the next 
section it is shown that they converge to a function $f$ which is
likewise a smooth strictly plurisubharmonic
exhaustion.  Thus the following is relevant for our considerations.
\begin {theorem}  
Let $Z$ be a Stein space equipped with a holomorphic
action of a reductive group $G$ which is the complexification of
a maximal compact subgroup $K$ and let $\rho :X\to \mathbb R^{\ge 0}$
be a smooth proper $K$-invariant strictly plurisubharmonic exhaustion.
Then its minimum set $M:=\{\zeta\in Z; \rho (\zeta)=
\mathrm{min}\{\rho(z);z\in Z\}\}$ consists of a single $K$-orbit which
is contained in the closed $G$-orbit in $Z$.
\end {theorem}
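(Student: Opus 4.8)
The plan is to exploit convexity of $\rho$ along the orbits of the complexification, a standard technique in geometric invariant theory and the theory of moment maps. First I would recall the ``Kempf--Ness'' type picture: for a fixed point $z \in Z$, consider the orbit map and restrict $\rho$ to the orbit $G.z$. Because $\rho$ is strictly plurisubharmonic and $K$-invariant, its restriction to the totally real orbit $K.z$ is constant, and more importantly, along each one-parameter subgroup $t \mapsto \exp(itX).z$ (for $X$ in the Lie algebra of $K$) the function $t \mapsto \rho(\exp(itX).z)$ is convex; strict plurisubharmonicity makes it strictly convex unless the one-parameter subgroup acts trivially on $z$, i.e. unless $X$ lies in the isotropy. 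This convexity is the engine of the whole argument.

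The key steps, in order, would be: (1) Existence of the minimum: since $\rho$ is a proper exhaustion bounded below by $0$, the minimum is attained on a nonempty compact set $M$, and $M$ is visibly $K$-invariant. (2) $M$ meets the closed orbit: take $z \in M$; if $G.z$ is not closed, its closure contains a strictly smaller orbit, and by the Hilbert--Mumford criterion there is a one-parameter subgroup $\lambda$ of $G$ with $\lim_{t\to 0}\lambda(t).z =: z_\infty$ existing and lying in a lower-dimensional orbit. Writing $\lambda(t) = \exp(i t X)\cdot(\text{compact part})$ after a Cartan decomposition and using $K$-invariance, the function $t \mapsto \rho(\exp(-sX).z)$ for $s \geq 0$ is convex and bounded (it converges as $s \to \infty$ to $\rho(z_\infty)$), hence non-increasing; so $\rho(z_\infty) \le \rho(z)$, forcing $z_\infty \in M$ as well. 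Iterating through the finitely many orbit-dimension drops, we reach a point of $M$ on the unique closed orbit. (3) $M$ is a single $K$-orbit: suppose $z, w \in M$ both lie in the closed orbit $\mathcal{O}$, so $w = g.z$ for some $g \in G$; using the Cartan decomposition $g = k_1 \exp(iX) k_2$ and $K$-invariance of $\rho$, reduce to the case $w = \exp(iX).z$. Then $s \mapsto \rho(\exp(isX).z)$ is a convex function on $[0,1]$ with equal values $\rho(z) = \rho(w)$ at the endpoints, hence constant on $[0,1]$; strict plurisubharmonicity of $\rho$ then forces $X$ to lie in the isotropy algebra of $z$ (the one-parameter subgroup $\exp(isX)$ acts trivially on $z$), so $w = z$. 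Therefore $M = K.z$.

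The main obstacle I expect is making step (2) fully rigorous in the setting of a \emph{normal complex space} with possible singularities, rather than a smooth affine variety: one needs the Hilbert--Mumford / limit-point machinery (existence of degenerating one-parameter subgroups detecting non-closedness of orbits) to be available for Stein $G$-spaces, and one needs the ``extended'' function $s \mapsto \rho(\exp(-sX).z)$ to remain convex and to have a finite limit as $s \to \infty$ — the latter following from properness of $\rho$ only if one first knows the limit point $z_\infty$ exists in $Z$, which is precisely the content of the Hilbert--Mumford criterion for $G$-actions on Stein spaces (e.g. via the Kempf--Ness theorem in the analytic category, or Heinzner's work on analytic Hilbert quotients). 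A secondary technical point is the convexity statement itself along $\exp(itX)$-orbits through possibly singular points: here one uses that strict plurisubharmonicity is defined via local embeddings into smooth ambient manifolds, and that the restriction of a plurisubharmonic function to the image of a holomorphic map $\mathbb{C} \to Z$, $t \mapsto \exp(itX + \text{const}).z$, is subharmonic, hence (being also invariant under $t \mapsto t + (\text{real})$, i.e. depending only on $\mathrm{Re}(t)$ via the $K$-invariance) convex in the real variable. Once these analytic facts are in place, steps (1) and (3) are short and the argument closes.
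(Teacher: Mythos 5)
The paper does not prove this statement at all: it is quoted verbatim as a special case of Corollary~1 in \S 5.4 of Heinzner's paper [H], so any proof you give is necessarily ``a different route.'' Your route is the standard Kempf--Ness one --- $K$-invariance plus plurisubharmonicity gives convexity of $s\mapsto\rho(\exp(isX).z)$, strictness of the plurisubharmonicity turns ``affine'' into ``the one-parameter subgroup fixes $z$,'' and the Hilbert--Mumford criterion feeds degenerating one-parameter subgroups into this convexity --- and this is indeed how the cited result is proved. You are also right to flag the analytic Hilbert--Mumford input as the serious technical debt: for a general Stein $G$-space this is precisely the nontrivial content of [H]/[HH1], though in the paper's actual application $Z=X\setminus Y$ is affine and the algebraic version suffices. (One further point you pass over: the existence of a \emph{unique} closed orbit is an implicit hypothesis here --- in the application it follows from $\mathcal O(Z)^G=\mathbb C$ --- and without it the conclusion is false, e.g.\ $\mathbb C^*$ acting on one factor of $(\mathbb C^*)^2$.)

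There is, however, one genuine gap in your step (2) as written: you show that if $z\in M$ and $G.z$ is not closed, then the limit point $z_\infty$ also lies in $M$, and you iterate to conclude that $M$ \emph{meets} the closed orbit. The theorem asserts that $M$ is \emph{contained} in the closed orbit, and your step (3) only treats points of $M$ that already lie on it, so the possibility of a minimizer $z$ with $G.z$ non-closed is never excluded. The fix uses exactly the mechanism you deploy in step (3): for $z\in M$ with $G.z$ non-closed, the convex function $s\mapsto\rho(\exp(-sX).z)$ starts at $\min\rho$, is non-increasing, and is bounded below by $\min\rho$, hence is \emph{constant}; since the degenerating one-parameter subgroup acts non-trivially on $z$ (otherwise $z_\infty=z$ would not lie in a smaller orbit), constancy of the pullback of $\rho$ along a non-constant holomorphic orbit map contradicts strict plurisubharmonicity. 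With that contradiction in place, every point of $M$ lies on the closed orbit and your step (3) finishes the argument.
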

This result is a special case of Corollary 1 in $\S5.4$ of \cite{H}.
It is one of the basic first steps for the construction of
the analytic Hilbert quotient by the method of K\"ahlerian reduction
(see \cite {HH1}).  It should also be mentioned that using the
gradient of the norm of the associated moment map one shows
that $M$ is a strong deformation retract of $Z$ (\cite {HH2}).
Although we apply these results in the case of an affine variety,
the plurisubharmonic functions at hand are only smooth and are
not of the type where the algebraic theory can be applied
(See \cite{N} for basic results in the situation where $Z$ is
equivariantly embedded in a representation space and $\rho $
is the restriction of a $K$-invariant norm-function.).

\subsubsection* {Product structure of $\mathbf{X\setminus Y}$}
Continuing in our special setting where $\mathcal O_\tau $ is as
above, we let $T_\tau $ be the connected component at the
identity of the isotropy group $T_{x_\tau}$ of any base point in 
$\mathcal O_\tau$.  This can be alternatively described as the 
connected component at the identity of the ineffectivity of the 
$T$-action on $\mathcal O_\tau$.
Let $T'$ be a complementary complex torus in $T$, i.e., $T=T_\tau \times T'$.

\bigskip\noindent
Let us choose $x_\tau$ so that there is a 1-parameter subgroup
$\lambda $ with $\lim_{t\to 0}\lambda (t)(x_0)=x_\tau$. Clearly
$\lambda (\mathbb C^*)\in T_\tau$ and thus $x_\tau$ is in the
closure $\mathrm {cl}(T_\tau.x_0)=:\Sigma_\tau$.  In fact $\{x_\tau\}$ is the
unique closed orbit in this closure and therefore the restriction
to $\Sigma_\tau$ of \emph{any} $T_{\mathbb R}$-invariant strictly 
plurisubharmonic exhaustion has exactly $\{x_\tau\}$ as its minimizing
set.
\begin {proposition}
The map $\alpha: T'\times \Sigma_\tau\to X\setminus Y$,
$(t',x)\mapsto t'(x)$, establishes an $T$-equivariant isomorphism
$\mathcal O_\tau\times \Sigma_\tau\to X\setminus Y$.
\end {proposition}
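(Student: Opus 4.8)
The plan is to realize both sides of the asserted isomorphism as affine toric varieties and to identify their fans. Recall from the discussion above that, $L$ being ample, $X\setminus Y$ is the affine toric variety $U_\tau$ attached to the cone $\tau\subset\mathfrak t_{\mathbb R}$, whose fan is the set of faces of $\tau$. The one structural point that needs real care is the following: because $T_\tau$ was defined as the connected isotropy of the closed orbit (equivalently, the connected ineffectivity on $\mathcal O_\tau$), its cocharacter lattice is exactly $\mathfrak t_{\mathbb Z}\cap\mathrm{span}_{\mathbb R}(\tau)$, which is \emph{saturated} in $\mathfrak t_{\mathbb Z}$. Hence the splitting $T = T_\tau\times T'$ is realized by a direct-sum decomposition of lattices $\mathfrak t_{\mathbb Z} = \left(\mathfrak t_{\mathbb Z}\cap\mathrm{span}_{\mathbb R}(\tau)\right)\oplus\mathfrak t'_{\mathbb Z}$; under this decomposition $\tau$ is a \emph{full-dimensional} strongly convex rational polyhedral cone in $\mathrm{span}_{\mathbb R}(\tau)$, and $\Sigma_\tau = \mathrm{cl}(T_\tau.x_0)$ is precisely the affine toric variety of the torus $T_\tau$ defined by this cone, with unique closed orbit the fixed point $\{x_\tau\}$ (as was already used above). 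This identification of $\Sigma_\tau$ also rests on the standard description of the closure of a subtorus orbit in a toric variety as the toric variety of the subfan of cones contained in the corresponding linear subspace.

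First I would check that $\alpha$ is a well-defined, $T$-equivariant morphism onto $X\setminus Y$ that restricts to an isomorphism on the dense orbits. It is the restriction of the action map $T\times X\to X$, hence a morphism; since $\Sigma_\tau$ is a union of orbits $\mathcal O_{\tau'}$ with $\tau'$ a face of $\tau$ — all lying in $X\setminus Y$ — and $Y$ is $T$-invariant, $\alpha$ maps into $X\setminus Y$. Letting $T = T_\tau\times T'$ act on $T'\times\Sigma_\tau$ by $(a,b)\cdot(t',x) = (bt',a.x)$, commutativity of $T$ gives $\alpha\!\left((a,b)\cdot(t',x)\right) = bt'.(a.x) = (ab).(t'.x) = (a,b).\alpha(t',x)$, so $\alpha$ is $T$-equivariant. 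The open $T$-orbit of $T'\times\Sigma_\tau$ is $T'\times(T_\tau.x_0)$ — a single orbit since $T = T_\tau\times T'$ — and $\alpha$ maps it onto $T.x_0 = \mathcal O_0$; as $T$ acts freely on $\mathcal O_0$ and $T_\tau\cap T' = \{e\}$, this map is bijective, hence an isomorphism (both sides being a single free $T$-orbit).

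To upgrade this to an isomorphism on all of $X\setminus Y$ I would compare fans. The variety $T'\times\Sigma_\tau$ is a normal toric variety for $T = T_\tau\times T'$ whose fan is the product fan $\{\tau'\times\{0\} : \tau'\text{ a face of }\tau\}$ in $\mathrm{span}_{\mathbb R}(\tau)\oplus\mathfrak t'_{\mathbb R} = \mathfrak t_{\mathbb R}$; since $\tau\subset\mathrm{span}_{\mathbb R}(\tau)$, this is literally the set of faces of $\tau$, i.e.\ the fan of $U_\tau = X\setminus Y$. A normal toric variety is determined by its fan, and the resulting canonical $T$-equivariant isomorphism is the unique one inducing the identity on the open orbit — which is $\alpha$, by the previous paragraph. (Alternatively one can argue geometrically: $\alpha$ is surjective because the distinguished point $x_{\tau'}$ of every boundary orbit $\mathcal O_{\tau'}\subset X\setminus Y$ already lies in $\Sigma_\tau$, whence $\mathcal O_{\tau'} = T'.(T_\tau.x_{\tau'})\subset\alpha(T'\times\Sigma_\tau)$; and $\alpha$ is injective because the $T$-stabilizer of any point of $\Sigma_\tau$ is a subtorus with cocharacter lattice inside $\mathrm{span}_{\mathbb R}(\tau)$, hence contained in $T_\tau$ and meeting $T'$ trivially. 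A bijective birational morphism onto the normal variety $X\setminus Y$ is an isomorphism by Zariski's Main Theorem.) Finally, since $\mathrm{Stab}_{T'}(x_\tau) = T'\cap T_\tau = \{e\}$ and $\dim T' = \dim\mathcal O_\tau$, the orbit map $t'\mapsto t'.x_\tau$ is an isomorphism $T'\xrightarrow{\sim}\mathcal O_\tau$, and precomposing with it turns $\alpha$ into the asserted $T$-equivariant isomorphism $\mathcal O_\tau\times\Sigma_\tau\xrightarrow{\sim}X\setminus Y$. I expect the only genuinely non-formal step to be the identification of $\Sigma_\tau$ carried out in the first paragraph; once that is in place, the rest is a short product-fan computation (or a routine bijectivity check plus Zariski's Main Theorem).
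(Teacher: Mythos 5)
Your proof is correct, but it takes a genuinely different route from the paper's. You follow the combinatorial path that the authors explicitly set aside: right after their proof they remark that ``this product decomposition can be proved by purely combinatorial means (see \cite{F})'', and that is essentially what you do --- identify $\Sigma_\tau$ as the affine toric variety of $T_\tau$ for $\tau$ viewed as a full-dimensional cone in $\mathrm{span}_{\mathbb R}(\tau)$, observe that the product fan of $T'\times\Sigma_\tau$ is literally the fan of faces of $\tau$, i.e.\ the fan of $X\setminus Y$, and invoke the equivalence between fans and normal toric varieties. The paper instead argues geometrically with no lattice bookkeeping: for injectivity, if $t_1'(x)=t_2'(x)$ then $\Sigma_\tau\cap t'(\Sigma_\tau)$ is a nonempty closed $T_\tau$-invariant set, hence contains the unique closed $T_\tau$-orbit $\{x_\tau\}$, which forces $t'=e$ once $T'$ has been replaced by its quotient by its finite ineffectivity on $\mathcal O_\tau$; surjectivity follows because every $T$-orbit in the affine variety $X\setminus Y$ has a point of the unique closed orbit $\mathcal O_\tau$ in its closure. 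Your parenthetical ``alternative'' argument is close in spirit to this, though your injectivity step runs through stabilizer lattices rather than the closed-orbit trick. What your approach buys is that the one delicate point --- saturation of $\mathfrak t_{\mathbb Z}\cap\mathrm{span}_{\mathbb R}(\tau)$, which underwrites both the splitting $T=T_\tau\times T'$ and the identification of $\Sigma_\tau$ --- is isolated and reduced to standard toric facts; what the paper's buys is brevity and independence from the fan dictionary, at the cost of handling a possible finite ineffectivity of $T'$ by hand.
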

\begin {proof}
To be precise we replace $T'$ by its quotient by its finite
ineffecitivity on $\mathcal O_\tau $.  Thus $T'$ acts freely
on $\mathcal O_\tau$. If $t'_1(x)=t_2'(x)$ for some $t_1',t_2'\in T'$
and $x\in \Sigma_\tau$, then $t':=t_1'(t_2')^{-1}$ is such that
$\Sigma_\tau \cap t'(\Sigma_\tau)\not=\emptyset $.  But 
this (closed) interesection is $T_\tau$-invariant and thus it contains the
closed $T_\tau$-orbit $\{x_\tau\}$.  It therefore follows that
$t_1'=t_2'$. Consequently $\alpha $ is an isomorphism onto an
open neighborhood of $\mathcal O_\tau$ which is equivariant with
respect to $T=T_\tau \times T'$.  Since every $T$-orbit in
$X\setminus Y$ has a point of $\mathcal O_\tau $ in its closure, it
follows that $\alpha $ is surjective.
\end {proof}
It should be mentioned that this product decomposition can
be proved by purely combinatorial means (see \cite{F}).

% \bigskip\noindent
% {\bf Holger, am I right that this can be found in Fulton?}

\subsection{Existence of the limit function}\label{labSecexistence}

Recall that our goal is to understand the limiting properties
of the probability density funtion $\vert \varphi_N\vert_h^2$.
The following is the first main step in this direction.
\begin {proposition}\label {tame sequence}
If $(s_N)$ is a tame sequence which approximates a ray $R(\xi)$ 
at infinity with limiting support $Y$, then the associated
sequence
\[
  f_N=-\frac{1}{N}\log \vert s_N\vert^2_h
\]
converges uniformly on compact subsets of $X\setminus Y$ to a smooth
strictly plurisubharmonic function $f$.
\end {proposition}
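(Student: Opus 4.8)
The plan is to reduce everything to an explicit local computation in affine toric charts and then exploit the stabilized vanishing orders of a tame sequence. First I would fix the tame sequence $(s_N)$ approximating $R(\xi)$ and recall from Lemma~\ref{labThmExistenceOfkj} and the tameness hypothesis that for each $j$ with $k_j=0$ we have $\mathrm{ord}_{Y_j}(s_N)=0$ for almost all $N$, while for $j$ with $k_j>0$ the hypersurface $Y_j$ lies in the limiting support $Y$ which we are excluding. So on any compact subset $K\Subset X\setminus Y$ we are only looking at the hypersurfaces $Y_j$ along which the sections do \emph{not} vanish in the limit. I would then work in an affine toric chart $U_\sigma$ corresponding to a maximal cone $\sigma$ whose star meets $K$; such a chart covers the part of $K$ near the boundary strata contained in $X\setminus Y$. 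Choosing the vertex $\alpha_\sigma$ of $P_D$ dual to $\sigma$ and writing $s_N = s_{N,\sigma}\,s_\sigma^N$ as in the proof of Lemma~\ref{labThmExistenceOfkj}, the function $s_{N,\sigma}$ is a $T$-equivariant holomorphic function; because it transforms by the character $\chi_{\alpha_N}\chi_{\alpha_\sigma}^{-N}$ and (by tameness, on $X\setminus Y$) its vanishing orders along the relevant $Y_j$ stay bounded, it extends to a genuine holomorphic — in fact, by the boundedness of $\alpha_N - N\alpha_\sigma$, a \emph{monomial-bounded} — function on $U_\sigma$.

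Next I would write out $f_N$ explicitly in this chart. We have
\[
  f_N = -\frac{1}{N}\log\vert s_\sigma\vert_h^{2N}
        -\frac{1}{N}\log\vert s_{N,\sigma}\vert^2
      = -\log\vert s_\sigma\vert_h^2 \;-\; \frac{1}{N}\log\vert s_{N,\sigma}\vert^2 .
\]
The first term is independent of $N$, smooth and strictly plurisubharmonic on $U_\sigma\setminus Y$ by the positivity assumption on $h$ (here we use that $s_\sigma$ is a nonvanishing local section away from its divisor, which on $U_\sigma$ only meets $X\setminus Y$ in hypersurfaces we have excluded). For the second term, I would use that $s_{N,\sigma}$ is a Laurent-monomial combination whose exponents are $\alpha_N - N\alpha_\sigma + (\text{bounded lattice shift})$; since $\alpha_N = N\xi + O(1)$, after dividing by $N$ the ``slope'' of the dominant monomial converges to $\langle\xi-\alpha_\sigma,\cdot\rangle$, so $-\frac1N\log\vert s_{N,\sigma}\vert^2$ converges uniformly on compacts in the chart to the linear (pluriharmonic) function $-\log\vert z^{\xi-\alpha_\sigma}\vert^2$ in the toric coordinates $z$. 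Adding the two pieces gives a limit $f$ that is smooth and strictly plurisubharmonic on $U_\sigma\setminus Y$, and the convergence is uniform on compacts there. Finally I would check that these local limits agree on overlaps — they must, since on the open orbit $f = \lim -\frac1N\log\vert s_N\vert_h^2$ is already well-defined and the chart-wise limits restrict to it there, hence glue to a global $f$ on $X\setminus Y$.

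The main obstacle I anticipate is controlling $s_{N,\sigma}$ uniformly: a priori $s_{N,\sigma}$ is a sum of several monomials (the section $s_N$ need not be a single eigensection — but in our toric setting it \emph{is}, by Proposition~\ref{labThmPolytopeSections}, since the representation on $\Gamma(X,L^N)$ is multiplicity-free, so each $s_N$ really is a single monomial $z^{\alpha_N}$ up to scalar). That simplification is crucial and I would invoke it explicitly: because $s_N = c_N s_{\alpha_N}$ with $s_{\alpha_N}$ the distinguished eigensection, in the chart $U_\sigma$ we get $s_{N,\sigma} = c_N\, z^{\alpha_N - N\alpha_\sigma}$ with $\alpha_N - N\alpha_\sigma$ a lattice point of bounded distance from $N(\xi - \alpha_\sigma)$, and the scalar $c_N$ drops out because $f_N$ and its limit are only considered up to the additive ambiguity fixed by $s_N(x_0)$. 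So the ``several monomials'' worry evaporates and the uniform estimate is an elementary bound $\bigl|\frac1N\log\vert z^{\alpha_N-N\alpha_\sigma}\vert^2 - \log\vert z^{\xi-\alpha_\sigma}\vert^2\bigr| \le \frac{C}{N}\sum_i |\log|z_i||$, uniform on any compact subset of the chart where the $|z_i|$ stay in a fixed compact interval of $(0,\infty)$ away from the excluded hypersurfaces. The remaining genuinely delicate point is the \emph{strict} plurisubharmonicity of $f$ at boundary points of $X\setminus Y$ lying in lower strata: there $-\log\vert s_\sigma\vert_h^2$ is strictly plurisubharmonic as a function on the ambient smooth local model by the positivity of $h$, and adding the pluriharmonic limit term preserves this, so strict plurisubharmonicity is inherited — but writing this cleanly requires the convention on smooth/strictly plurisubharmonic functions on singular spaces from \S\ref{labSecTorusLifting}, which I would cite and apply verbatim.
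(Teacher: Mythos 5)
Your argument is correct, and it rests on the same decomposition that drives the paper's proof: split $f_N$ into a fixed strictly plurisubharmonic term coming from a nonvanishing reference eigensection plus a pluriharmonic character term $-\frac1N\log\vert\chi_{\alpha_N-N\alpha_{\mathrm{ref}}}\vert^2$, whose uniform convergence is forced by $\alpha_N=N\xi+O(1)$ and whose finiteness on all of $X\setminus Y$ (not just the open orbit) is exactly what tameness buys, since the exponents along the $Y_j$ with $k_j=0$ vanish. The difference is in how the reference section is produced. The paper uses Lemma~\ref{saturation}, its corollary and the product structure $X\setminus Y\cong\Sigma_\tau\times\mathcal O_\tau$ to obtain one global eigensection $\hat s_0$ nonvanishing on all of $X\setminus Y$, thereby reducing the whole statement to the open-orbit computation on the closed orbit $\mathcal O_\tau$; you instead work chart-by-chart with the vertex sections $s_\sigma$, which trivialize $L^N$ on the affine pieces $U_\sigma$ by the standard toric description behind Proposition~\ref{labThmPolytopeSections}, and then glue using agreement on the dense orbit. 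Your route is more elementary in that it bypasses the saturation lemma and the product decomposition, at the cost of the (routine) gluing step. Two small points to tighten: the scalar $c_N$ drops out only because the $s_N$ are the normalized eigensections with $s_N(x_0)=1^N$ (for an unnormalized sequence one must additionally require $\frac1N\log\vert c_N\vert$ to converge, a normalization the paper also makes implicitly); and in your uniform estimate the sum $\sum_i\vert\log\vert z_i\vert\vert$ should run only over the coordinates $z_i$ corresponding to hypersurfaces in $Y$ — these are precisely the ones bounded away from $0$ on compact subsets of $U_\sigma\setminus Y$, while by tameness the remaining coordinates occur with exponent zero and so contribute nothing.
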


{\bf Zusatz.} {\it Any sequence approximaiing the same ray at
infinity converges uniformly on compact subsets of the open orbit to
$f$. Thus $f$ could be altermatively defined as the extension by
continuity to $X\setminus Y$ of this limit function.}

\begin{remarks}
First, observe that by using the local description of $s_N$ near its
supporting hypersurfaces one shows $f$ can be extended by continuity
to a function $f:X\to \mathbb R \cupdot \{\infty\}$ by defining it
to be identically $\infty $ on $Y$. This extended function is
strictly plurisubharmonic as an $\mathbb R \cupdot \{\infty\}$
valued function. Secondly, as the reader will note in the proof, the
convergence $f_N\to f$ is locally given by the convergence of the
sequence $(s_N^{1/N})$ of holomorphic functions. Thus the
convergence in the $C^\infty$-topology is also guaranteed. Finally,
if $\widehat {s}_N$ is any sequence approximating $R(\xi)$, then on
the open orbit
\[
  f_N - \widehat{f}_N = 2\mathrm{Re}
    \Bigl( \frac{\beta_N}{N} \Bigr) 
\]
where $\beta _N$ is a sequence of linear functions which are
contained in a bounded set. Consequently, on the open orbit $f$ and
$\widehat {f}$ agree. As a result the limiting function defined by a
tame sequence is unique: Take any sequence which approximates $R(\xi
)$ at infinity and define $f$ to be the function on $X$ which is
obtained by extending the uniquely defined function on the open
orbit to all of $X$. Below we will also show that that the limiting
measure of the probability densities $\vert \varphi_N\vert_h^2$ also
only depends on the ray and not on the particular sequence which
approximates it at infinity. \qed
\end{remarks}

The following fact plays an essential role in our proof of
Proposition \ref{tame sequence}.

\begin {lemma}\label {saturation}
Let $Z$ be a compact toric variety of a group $T\cong(\mathbb
C^*)^k$ and $z_0\in Z$ be a $T$-fixed point. If $L$ is a very ample
$T$-line bundle on $Z$, it follows that up to constant multiples
there is exactly one eigensection which does not vanish at $z_0$.
\end {lemma}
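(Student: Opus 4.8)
The plan is to translate the non-vanishing condition at the $T$-fixed point $z_0$ into the combinatorial language of the polytope $P_D$ and then invoke Proposition~\ref{labThmPolytopeSections}. First I would fix the $T$-invariant divisor $D=\sum_j a_j Y_j$ with $L=L(D)$ associated to a base eigensection $s_0$, so that by Proposition~\ref{labThmPolytopeSections} the eigensections of $L$ correspond bijectively to the lattice points $\alpha\in P_D\cap\mathfrak t^*_{\mathbb Z}$, with $s_\alpha$ the unique eigensection of character $\chi_\alpha$ normalized by $s_\alpha(x_0)=1$. The key geometric input is that, since $z_0$ is a $T$-fixed point, fact~(2) of \S\ref{labSecTorusCombinatorics} says that $z_0$ corresponds to a cone $\sigma$ of maximal dimension $k$ in the fan $\Sigma(Z)$; equivalently, $\sigma$ is generated by the primitive vectors $v_{j}$, $j\in J$, of the $k$ invariant hypersurfaces $Y_j$ passing through $z_0$, and these vectors span $\mathfrak t_{\mathbb R}$.

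The main step is to show that $s_\alpha$ does not vanish at $z_0$ precisely when $\alpha$ is the vertex $\alpha_\sigma$ of $P_D$ dual to the maximal cone $\sigma$. For this I would express $s_\alpha = s_{\alpha,\sigma}\,s_{\alpha_\sigma}$, where $s_{\alpha_\sigma}$ is chosen (as in the proof of Lemma~\ref{labThmExistenceOfkj}, applied at each facet $H_j$, $j\in J$) not to vanish identically on any $Y_j$ through $z_0$, hence not at $z_0$. The remaining factor $s_{\alpha,\sigma}$ is a $T$-equivariant holomorphic function on $Z$ of character $\chi_\alpha\chi_{\alpha_\sigma}^{-1}$, so on the affine chart $U_\sigma\ni z_0$ it is (up to a unit) the monomial whose vanishing order along $Y_j$ is $\langle\alpha-\alpha_\sigma,v_j\rangle=\langle\alpha,v_j\rangle+a_j\ge 0$ for all $j\in J$, these being the defining inequalities of $P_D$ at the facets meeting $z_0$. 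Since the $v_j$, $j\in J$, form a basis of $\mathfrak t_{\mathbb R}$ (as $\sigma$ is maximal), the monomial $s_{\alpha,\sigma}$ is a genuine regular function on $U_\sigma$ vanishing at $z_0$ unless all these orders are zero, i.e.\ unless $\langle\alpha,v_j\rangle=-a_j$ for every $j\in J$, which determines $\alpha=\alpha_\sigma$ uniquely. Thus exactly one $\alpha$ — the vertex $\alpha_\sigma$ — gives an eigensection non-vanishing at $z_0$, and since eigensections are determined up to scalar by their character (the representation being multiplicity-free), this proves the claim.

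I would then note that one need not even pass to the affine chart coordinates explicitly: the non-vanishing of an eigensection at a point $z$ is equivalent to $z$ lying in the support-complement of the corresponding effective divisor, and for a $T$-fixed point the only effective $T$-invariant divisor in the linear system not containing $z_0$ is the one associated to the vertex $\alpha_\sigma$ — this is a standard reformulation of the fact that a vertex of $P_D$ is the unique lattice point of $P_D$ minimizing all the facet functionals meeting $z_0$ simultaneously. The only point requiring care, and the mild obstacle, is checking that $s_{\alpha,\sigma}$ really extends holomorphically over $U_\sigma$ (not just that its restriction to the open orbit is a Laurent monomial): this is exactly where maximality of $\sigma$ is used, guaranteeing that $U_\sigma\cong\mathbb C^k$ with the $v_j$, $j\in J$, as a lattice basis, so that every exponent vector with all $\langle\alpha-\alpha_\sigma,v_j\rangle\ge 0$ gives a polynomial, and strict positivity of some exponent forces vanishing at the origin $z_0$. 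No estimates or hard analysis are involved; the content is entirely the dictionary between fan data and sections.
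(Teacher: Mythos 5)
Your route is genuinely different from the paper's. The paper avoids the polytope dictionary entirely: it covers the saturation of $z_0$ by closures of one-parameter orbits through $z_0$, normalizes each such curve to $\mathbb P_1$, pulls $L$ back to a power of the hyperplane bundle, and observes that two eigensections not vanishing at $z_0$ restrict on each curve to $\mathbb C^*$-eigensections of $H^k$ vanishing only at the opposite fixed point, hence are proportional there; density of the saturation then forces global proportionality. That argument is purely geometric and sidesteps any discussion of which lattice point realizes the nonvanishing section. Your combinatorial argument is the standard one and in exchange identifies the section explicitly as the one attached to the vertex $\alpha_\sigma$ of $P_D$ dual to the maximal cone $\sigma$ — information the paper in fact uses implicitly elsewhere (e.g.\ in the proof of Lemma~\ref{labThmExistenceOfkj}) — and it also delivers existence directly, which the paper's uniqueness argument leaves to very ampleness.

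There is, however, one concrete flaw: you assert that the generators $v_j$, $j\in J$, of the maximal cone $\sigma$ form a lattice basis and that $U_\sigma\cong\mathbb C^k$, and you lean on this ("every exponent vector with nonnegative pairings gives a polynomial vanishing at the origin") precisely at the decisive step. That holds only when $Z$ is smooth at $z_0$; the paper works throughout with normal, possibly singular toric varieties (a point it emphasizes, e.g.\ in the appendix), where $\sigma$ may have more than $k$ extremal rays and $U_\sigma$ is only $\operatorname{Spec}\mathbb C[\sigma^\vee\cap\mathfrak t^*_{\mathbb Z}]$. The step is repairable without smoothness: the condition for $\chi_{\alpha-\alpha_\sigma}$ to be regular on $U_\sigma$ is $\alpha-\alpha_\sigma\in\sigma^\vee$, which is exactly the facet inequalities $\langle\alpha,v_j\rangle\ge -a_j$ for $j\in J$; and the distinguished fixed point of $U_\sigma$ is the semigroup homomorphism sending $u\mapsto 1$ for $u\in\sigma^\perp$ and $u\mapsto 0$ otherwise, so that, $\sigma$ being full-dimensional and hence $\sigma^\perp=\{0\}$, every nonconstant regular character vanishes at $z_0$. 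With that substitution your argument goes through verbatim, and uniqueness of $\alpha_\sigma$ needs only that the $v_j$ span $\mathfrak t_{\mathbb R}$, not that they be a basis.
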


\begin {proof}
For every point $z$ of the (Zariski open) saturation $\mathcal
S(z_0):=\{z\in T; z_0\in \mathrm {cl}(T.z)\}$ there is a 1-parameter
group $\lambda :\mathbb C^*\to T$ with $z_0$ in the (compact)
closure $C=\mathrm {cl}(\lambda (\mathbb C^*).z_0)$ in $Z$. Let
$\widehat {C}$ be the normalization of such a curve. Note that
$\widehat {C}\cong \mathbb P_1$ and that the lifted action of
$\mathbb C^*$ has two fixed points, one over $z_0$ and another
$\widehat {z}_1$ over some other point in $C$. The pull-back
$\widehat L\to \widehat C$ of the restriction of $L$ to $C$ is
isomorphic to some positive power of $H^k$ of the hyperplane section
bundle. If $s_1,s_2\in \Gamma (Z,L)\setminus \{0\}$ are
eigensections, neither of which vanishes at $z_0$, then the lifts
$\widehat {s}_1$ and $\widehat {s}_2$ of their restrictions to $C$
are $\mathbb C^*$-eigensections of $\widehat {L}$ on $\widehat {C}$
which only vanish at $\widehat {z}_1$. Thus $\widehat {s}_1$ is a
constant multiple of $\widehat {s}_2$ and consequently $s_1\vert C$
is a constant multiple $s_2\vert C$. Since this holds for every
curve $C$ constructed in this way and the constant is uniquely
determined by $s_1(z_0)$ and $s_2(z_0)$, the desired result follows.
\end {proof}
We apply this Lemma to restrictions of eigensections to the
closures of fibers of the $T_\tau$-invariant projection
map $q:\Sigma_\tau\times \mathcal O_\tau\to \mathcal O_\tau$.
\begin {corollary}
If the base section $s_0$ is chosen so that $s_0(x_\tau)\not=0$
and $s_N$ is a $T$-eigensection in $\Gamma (X,L^N)$ which also 
does not vanish at $x_\tau $, then the restrictions of 
$s$ and $s_0$ to any $q$-fiber agree. In particular, the
group $T_\tau$ is in the kernel of the character $\chi $
associated to $s$ by the base eigensection $s_0$. In particular,
the relation $s(tx_\tau)=\chi (t)s_0(tx_\tau)$ holds on $\mathcal O_\tau$
\end {corollary}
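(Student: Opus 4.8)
The plan is to deduce the Corollary directly from Lemma~\ref{saturation} by applying it fibrewise. First I would recall that $\Sigma_\tau = \mathrm{cl}(T_\tau.x_0)$ is, by the product decomposition $X\setminus Y \cong \mathcal O_\tau \times \Sigma_\tau$ established in the previous subsection, a compact toric variety for the torus $T_\tau$ with $x_\tau$ as a $T_\tau$-fixed point. More precisely, the closure $\overline{\Sigma_\tau}$ in $X$ is a compact normal toric variety under $T_\tau$; the restriction $L|_{\overline{\Sigma_\tau}}$ is a very ample $T_\tau$-line bundle, since restriction of a very ample bundle to a closed invariant subvariety is again very ample. The point $x_\tau$ is a $T_\tau$-fixed point of $\overline{\Sigma_\tau}$. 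This puts us exactly in the hypotheses of Lemma~\ref{saturation} with $Z = \overline{\Sigma_\tau}$, $z_0 = x_\tau$, and $L$ replaced by $L|_Z$.

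Next, by hypothesis $s_0(x_\tau)\neq 0$ and $s_N(x_\tau)\neq 0$, so the restrictions $s_0|_Z$ and $s_N|_Z$ are eigensections of $L|_Z$ (resp.\ $L^N|_Z$) which do not vanish at $z_0 = x_\tau$. By Lemma~\ref{saturation} applied to $Z$ and its line bundle $L^N|_Z$, the eigensection $s_N|_Z$ is a constant multiple of $s_0^N|_Z$; normalizing via $s_0(x_\tau)$ and $s_N(x_\tau)$ fixes the constant, and in fact one can arrange $s_N|_Z = s_0^N|_Z$ after the normalization implicit in the statement (the restrictions agree). More generally, the same argument applies to the translate $t'(Z) = t'(\overline{\Sigma_\tau})$ for any $t'\in T'$: this is again a compact $T_\tau$-toric variety (since $T' $ commutes with $T_\tau$, conjugation by $t'$ is an isomorphism of $T_\tau$-spaces) whose closed orbit is $\{t'(x_\tau)\}$, and the closures of the $q$-fibers $q^{-1}(t'(x_\tau))$ over $\mathcal O_\tau$ are precisely these translates. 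Hence $s_N$ and $s_0^N$ have proportional — indeed equal, after normalization — restrictions to each $q$-fiber closure, which is the first assertion.

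For the second assertion, fix a $q$-fiber closure $C_{t'} := \mathrm{cl}(q^{-1}(t'(x_\tau)))$. Since $T_\tau$ acts on $C_{t'}$ and the restrictions of $s_N$ and $s_0^N$ to $C_{t'}$ agree, the character by which $T_\tau$ acts on $s_N|_{C_{t'}}$ equals the character by which it acts on $s_0^N|_{C_{t'}}$. But $s_0$ is the base section defining the lifting, so $T_\tau$ acts trivially on $s_0$, hence on $s_0^N|_{C_{t'}}$; therefore $T_\tau\subset \mathrm{Ker}(\chi)$ where $\chi = \chi_N$ is the character of $s_N$. Finally, on the open orbit the eigensection relation $s_N(t.x_0) = \chi(t)\,s_0^N(t.x_0)$ holds by the parameterization of eigensections recalled in \S\ref{labSecTorusLifting}; writing a point of $\mathcal O_\tau$ as $t.x_\tau$ for $t\in T$, the fact that $T_\tau$ acts trivially on $s_0$ and that $\chi$ is trivial on $T_\tau$ lets us pass to the limit / extend the identity $s_N(t.x_\tau) = \chi(t)\,s_0^N(t.x_\tau)$ on $\mathcal O_\tau$ by the same continuity argument used elsewhere for extending relations from the open orbit to the boundary orbit $\mathcal O_\tau$.

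The main obstacle I anticipate is the bookkeeping for the \emph{translated} fibers: one must check carefully that $t'(\overline{\Sigma_\tau})$ really is the closure of the $q$-fiber over $t'(x_\tau)$, that it remains a \emph{compact normal} toric variety under $T_\tau$ with a $T_\tau$-fixed point, and that Lemma~\ref{saturation} — which is stated for a fixed $Z$ — applies uniformly across all of them so that the various proportionality constants fit together into a single global identity of sections rather than merely a pointwise one. This is where the product structure $X\setminus Y\cong\mathcal O_\tau\times\Sigma_\tau$ does the essential work, and it is the one place where a little care (rather than a routine citation) is needed.
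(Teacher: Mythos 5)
Your proposal is correct and follows essentially the same route as the paper: apply Lemma~\ref{saturation} to the $T_\tau$-action on the (compact) closures of the $q$-fibers, fix the constant by the normalization $s_N(x_0)=s_0(x_0)^N$ on $\Sigma_\tau$, and transport equality to the other fibers by the transitive $T'$-translation coming from the product structure. The extra care you take with the translated fibers and with deducing $T_\tau\subset\mathrm{Ker}(\chi)$ only makes explicit what the paper leaves as an immediate consequence.
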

\begin {proof}
This follows immediately from the above proposition by applying
it in the case of the $T_\tau$-action on the closure of a $q$-fibers.
It is applicable because the closed $T_\tau$-orbit is its fixed point
which is the intersection point of the $q$-fiber with $\mathcal O_\tau$.
Furthermore, $s_N(x_0)=s_0(x_0)^N$; so 
$s_N\vert \Sigma _\tau= cs_0\vert \Sigma _\tau $ with $c=1$.  Equality
on the other fibers follows from the fact that $T$ acts transitively
on the set of these fibers.
\end {proof}

Of course the original base section $s_0$ may vanish at $x_\tau$.
If it does, using the fact that $L$ is very ample, we know
that there \emph{is} a $T$-eigensection $\hat {s}_0$ which does
not vanish there.  As we have already noted, changing from
$s_0$ to $\hat {s}_0$ only has the effect of translating
$R(\xi)$ to $R(\xi-\hat {\alpha})$ where the character associated
to $\hat {s}_0$ for the base point $s_0$ is $\chi _{\hat \alpha}$.

\bigskip\noindent
{\it Proof of Proposition \ref{tame sequence}}: It follows 
from the above corollary and the remark that a base change
has no influence on the discussion that we must 
only prove this for the sequence $(s_N\vert \mathcal O_\tau)$.  
But this is just the (possibly lower-dimensional) case of the open orbit!  
In that case
$$
f_N=-\frac{1}{N}\log \vert \chi_{\alpha_N}\vert ^2+\log \vert s_0\vert_h^2
$$
and the desired convergence is guaranteed by the fact that
$\frac{\alpha_N}{N}=\xi +O(N^{-1})$.

\bigskip\noindent
Note that since
$-\frac{1}{N}\log \vert \chi_{\alpha_N}\vert^2$ converges, given a point
in $\mathcal O_\tau$ we may choose 
a subsequence so that $(s_{N_k})^{\frac{1}{N_k}}$ converges locally
uniformly.  Since the local limit section is holomorphic, it
follows that the limit function $f$ is smooth and strictly plurisubharmonic.
\qed

\subsubsection* {Exhaustion property}
As we have seen above the functions $f_N$ and the limit $f$
are smooth strictly plurisubharmonic exhaustions of $X\setminus Y$.
Let $M:=\{z\in X\setminus Y; f(z)=\mathrm {min}\{f(x);x\in X\setminus Y\}\}$
and define $M_N$ to be the corresponding set for $f_N$.  We know
that $M_N=T_{\mathbb R}.x_N$ and $M=T_{\mathbb R}.x_0^\tau$ for points
$x_N,x_0^\tau\in \mathcal O_\tau$. Here we fix $x_0^\tau$ for the
discussion and normalize $f$ so that $f(x_0^\tau)=0$.  Let us
underline that the functions $f_N\vert \mathcal O_\tau$ and 
$f\vert \mathcal O_\tau$ have particularly strong convexity properties.
\begin {proposition}\label{convex exhaustion}
Let $\rho $ is smooth $(S^1)^k$-invariant strictly plurisubharmonic
function on $(\mathbb C^*)^k$ which takes on a local minimum
at a point $z_0$, then this is a global minimum, the
set
$M:=\{z\in X\setminus Y; \rho(z)=\mathrm {min}\{\rho(x);x\in X\setminus Y\}\}$
is the $(S^1)^k$-orbit of $z_0$ and $\rho $ is an exhaustion 
of $(\mathbb C^*)^k$.
\end {proposition}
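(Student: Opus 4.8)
The plan is to pass to logarithmic coordinates, where the $(S^1)^k$-invariance and plurisubharmonicity turn the problem into a statement about convex functions on $\mathbb{R}^k$. Writing $z_j = e^{w_j}$ with $w_j = u_j + i\theta_j$, the torus $(\mathbb{C}^*)^k$ is the quotient of $\mathbb{C}^k$ by the lattice $2\pi i \mathbb{Z}^k$, and the coordinates $\theta_j$ are the angular variables on $(S^1)^k$. Since $\rho$ is $(S^1)^k$-invariant, its pullback to $\mathbb{C}^k$ depends only on $u = (u_1,\dots,u_k) \in \mathbb{R}^k$; call this function $g(u)$. A standard computation shows that the complex Hessian of a function of the form $g(\operatorname{Re} w_1,\dots,\operatorname{Re} w_k)$ is (a constant multiple of) the real Hessian of $g$; hence $\rho$ being strictly plurisubharmonic is equivalent to $g$ being strictly convex on $\mathbb{R}^k$.

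Once this reduction is in place, everything follows from elementary convex analysis. First I would note that a strictly convex function on $\mathbb{R}^k$ has at most one critical point, and any local minimum is automatically the unique global minimum; this gives that the local minimum of $\rho$ at $z_0$ is a global minimum and, by invariance, that the minimum set $M$ is precisely the $(S^1)^k$-orbit of $z_0$ (the fiber over the minimizer $u_0$ of $g$). For the exhaustion property, I would invoke the fact that a strictly convex function on $\mathbb{R}^k$ which attains its minimum is automatically proper: if $u_0$ is the minimizer, then for any unit vector $e$ the one-variable restriction $t \mapsto g(u_0 + te)$ is strictly convex with a minimum at $t=0$, so its right derivative at $t=1$ is some $\delta(e) > 0$, whence $g(u_0 + te) \ge g(u_0+e) + (t-1)\delta(e)$ for $t \ge 1$; a compactness argument over the unit sphere gives a uniform linear lower bound $g(u) \ge g(u_0) + c\|u - u_0\| - C$, which is an exhaustion estimate. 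Pulling back through the identification $(\mathbb{C}^*)^k \cong \mathbb{R}^k \times (S^1)^k$ (where the sublevel sets of $\rho$ are of the form $\{g \le c\} \times (S^1)^k$, hence compact), we conclude $\rho$ is an exhaustion of $(\mathbb{C}^*)^k$.

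The only genuine subtlety — and the main point to be careful about — is that the proposition as stated conflates two things: the intrinsic statement about $\rho$ on $(\mathbb{C}^*)^k$, and its appearance in the ambient toric variety (the set $M$ is written as $\{z \in X \setminus Y; \dots\}$). I would phrase the argument purely intrinsically on $(\mathbb{C}^*)^k$, proving that $M$ equals the compact orbit $(S^1)^k.z_0$ and that $\rho$ is a proper exhaustion there; the application to $\mathcal{O}_\tau \cong (\mathbb{C}^*)^k$ inside $X \setminus Y$ is then just a matter of recording that $f_N|\mathcal{O}_\tau$ and $f|\mathcal{O}_\tau$ are of exactly this type, as already observed before the statement. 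Since all the real work is the one-line passage to $\log$-coordinates plus textbook convexity facts, there is no hard analytic obstacle; the only care needed is bookkeeping the identification of $(\mathbb{C}^*)^k$ with $\mathbb{R}^k \times (S^1)^k$ and checking that strict plurisubharmonicity transfers to strict convexity of $g$ (as opposed to mere convexity), which uses that the Levi form is strictly positive definite in the $u$-directions.
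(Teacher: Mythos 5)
Your proposal is correct and follows essentially the same route as the paper, whose entire proof consists of the one-line reduction to polar (logarithmic) coordinates, where $(S^1)^k$-invariance and strict plurisubharmonicity turn $\rho$ into a strictly convex function on $\mathbb R^k$ with a local minimum at $\log z_0$. You have simply filled in the standard convex-analysis details (uniqueness of the minimizer, identification of $M$ with the torus fiber, and the linear lower bound giving properness) that the paper leaves implicit.
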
 
\begin {proof}
Using polar coordinates we write $\rho =e^{r(t)}$ where $r$ is a 
strictly convex function on $\mathbb R^k$ which takes on a
local minimum at $\log z_0$. 
\end {proof} 
\begin {corollary}
The points $x_N$ can be chosen in $M_N$ so that that they converge
to $x_0^\tau$.
\end {corollary}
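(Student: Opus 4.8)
The plan is to transfer the whole question to the closed orbit $\mathcal O_\tau$, pass to the logarithmic coordinates supplied by Proposition \ref{convex exhaustion}, and then invoke the elementary fact that the minimizers of a compactly convergent family of proper convex functions converge to the minimizer of a strictly convex limit. First observe that the global minimum of $f_N$ on $X\setminus Y$ is by hypothesis attained at $x_N\in\mathcal O_\tau$; since $\mathcal O_\tau\subset X\setminus Y$, the point $x_N$ also minimizes the restriction $f_N|\mathcal O_\tau$, and likewise $x_0^\tau$ minimizes $f|\mathcal O_\tau$ with value $0$. As $\mathcal O_\tau$ is closed in $X\setminus Y$, the restrictions $f_N|\mathcal O_\tau$ and $f|\mathcal O_\tau$ are again smooth strictly plurisubharmonic exhaustions, and $f_N|\mathcal O_\tau\to f|\mathcal O_\tau$ uniformly on compact subsets by Proposition \ref{tame sequence}. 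Now identify $\mathcal O_\tau$ with the complex torus $(\mathbb C^*)^k$, $k=\dim\mathcal O_\tau$, on which $T_{\mathbb R}$ acts through the compact torus $(S^1)^k$ (recall $T_\tau$ lies in the ineffectivity of the $T$-action on $\mathcal O_\tau$), and split $(\mathbb C^*)^k=(S^1)^k\times\mathbb R^k$ via $z\mapsto(z/|z|,\log|z|)$. By $T_{\mathbb R}$-invariance the functions $f_N|\mathcal O_\tau$ and $f|\mathcal O_\tau$ descend to functions $r_N$ and $r$ of the logarithmic coordinate $\eta\in\mathbb R^k$; as in Proposition \ref{convex exhaustion}, strict plurisubharmonicity makes $r_N$ and $r$ strictly convex, and the exhaustion property makes them proper. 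Write $\eta_\infty$ for the (unique, by strict convexity) minimizer of $r$, so that $M=(S^1)^k\times\{\eta_\infty\}$ and $r(\eta_\infty)=0$; similarly $M_N=(S^1)^k\times\{\eta_N\}$ with $\eta_N$ a minimizer of $r_N$. It then suffices to prove $\eta_N\to\eta_\infty$, for then, writing $x_0^\tau=(\theta_\infty,\eta_\infty)$, the points $x_N:=(\theta_\infty,\eta_N)\in M_N$ converge to $(\theta_\infty,\eta_\infty)=x_0^\tau$.

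For the convergence $\eta_N\to\eta_\infty$: fix $\delta>0$ and set $c:=\min\{r(\eta):\,|\eta-\eta_\infty|=\delta\}$, which is positive since $r$ is strictly convex with minimum value $0$ at $\eta_\infty$; by convexity $r\ge c$ off the ball $B(\eta_\infty,\delta)$. Choose $N$ large enough that $r_N$ is proper and $|r_N-r|<c/3$ on the closed ball $\bar B(\eta_\infty,\delta)$. Then $r_N(\eta_\infty)<c/3$, so $\min_{\mathbb R^k}r_N<c/3$, while $r_N\ge 2c/3$ on the sphere $|\eta-\eta_\infty|=\delta$. The sublevel set $\{r_N\le c/3\}$ is convex, contains $\eta_\infty$, and misses that sphere, hence is contained in $B(\eta_\infty,\delta)$; since $r_N$ is proper it attains its minimum, and any minimizer $\eta_N$ lies in $\{r_N\le c/3\}$, so $|\eta_N-\eta_\infty|<\delta$. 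As $\delta>0$ was arbitrary, $\eta_N\to\eta_\infty$. Alternatively, on $\mathcal O_\tau$ one has $r_N=L_N+\phi+\mathrm{const}_N$, where $\phi$ is the fixed strictly convex K\"ahler potential in the coordinate $\eta$ and $L_N$ is affine with $L_N\to L_\infty$; then $\nabla\phi(\eta_N)=-\nabla L_N\to-\nabla L_\infty$, and since $\nabla\phi$ is the momentum map, a homeomorphism onto an open convex set with continuous inverse, one reads off $\eta_N\to(\nabla\phi)^{-1}(-\nabla L_\infty)=\eta_\infty$.

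The one step that is not purely formal is preventing the minimizers $\eta_N$ from escaping to infinity: uniform convergence on compacta of the $r_N$ by itself does not force convergence of the minimizers. What rescues the argument is that each $r_N$ is \emph{individually} convex and proper, so the sublevel-set estimate above confines the minimizer of $r_N$ to $B(\eta_\infty,\delta)$ as soon as $r_N$ is close enough to $r$ there. This is precisely where the exhaustion property of $f_N$ on $X\setminus Y$ — valid for all large $N$ because the sequence is tame — is used.
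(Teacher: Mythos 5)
Your proof is correct and follows essentially the same route as the paper's: restrict to the closed orbit $\mathcal O_\tau$, use the uniform convergence $f_N\to f$ on a compact neighborhood of $M$ together with the strict convexity and properness of the functions in logarithmic coordinates (Proposition \ref{convex exhaustion}) to trap the minimizing set $M_N$ in an arbitrarily small neighborhood of $M$. Your sublevel-set argument is simply a more explicit rendering of the paper's appeal to ``strong convexity implies the local minimum is global.''
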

\begin {proof}
If $U=U(M)$ is an arbitrary $T_{\mathbb R}$-invariant relatively
compact neighborhood of $M$ in $X\setminus Y$, then the restriction
of $f$ to the boundary of $U(M)\cap T.x_1$ is strictly larger than
$0$. Since $f_N$ converges uniformaly to $f$ on $U(M)$, this implies
that for $N$ sufficiently large $f_N$ attains a local minimum in the
interior of $U(M)$. But the strong convexity of $f_N$ (see the
argument above) implies that this local minimum is global, i.e.,
$M_N\subset U(M)$.
\end {proof}

\subsubsection*{Dependency on the metric and the ray}

The minimizing set $M$ of $f$ is contained in
the closed $T$-orbit $\mathcal O_\tau$ in $X\setminus Y$ where 
$Y$ and $\mathcal O_\tau$ are completely determined by the face
of $P_D$ which contains $\xi $.  The exact location of $M$ in
$\mathcal O_\tau$ depends on the metric $h$ and $\xi$.  With
respect to metrics we underline that we only consider those
which are positive in the sense that $-\log \vert s\vert^2_h$ is
strictly plurisubharmonic for any (local) holomorphic section $s$. 
If $\hat h=e^{\phi}h$ is such a metric and $\hat f$
is the associated strictly plurisubharmonic limit function
associated to a ray, then a direct calculation shows that
$\hat f=-\phi +f$.  This has the corresponding effect on the
minimizing set in $\mathcal O_\tau$ 

\bigskip\noindent
Concerning the dependence on $\xi $, recall that the convergence
$f_N\to f$ might require a change of the base eigensection
so that $\alpha _N=N(\hat {\xi})+O(1)$ where 
$\hat {\xi}=\xi-\hat {\alpha}$.  Then, 
$f\vert \mathcal O_\tau= -\log \vert \hat {s}_0\vert_h^2-\delta$
where 
$$
\delta =\lim_{N\to \infty}\frac{1}{N}\log \vert \chi_{\alpha_N}\vert^2\,.
$$
In order to interpret this correctly, we must identify $\mathcal O_\tau$
with the subgroup $T'\cong (\mathbb C^*)^k$, use the 
$T'_{\mathbb R}$-invariance of $\vert \hat {s}_0\vert_h^2$ and
express these quantities in polar coordinates as in Proposition 
\ref{convex exhaustion}. At that level, i.e., in logarithmic 
coordinates, we see that $f$ is the translate by the linear function 
$2\pi \mathrm {Im}(-\hat {\xi})$ of the strictly convex exhaustion of 
$\mathbb R^k$ determined by $-\log \vert \hat{s}_0\vert_h^2$. The influence 
of such a linear translation on the minimizing set of a 
strictly convex function is easily understood. It also should be noted
that $M$ is just the preimage of $0$ under the moment map defined
by the K\"ahlerian potential $f$ on $X\setminus Y$ (see, e.g.,\cite{HH1}).
\subsection{Localization}
In order to obtain estimates of integrals involving the functions
$f_N$ we will use information on their restrictions to orbits of
1-parameter subgroups which close up to points on the closed orbit
$T.x_0^\tau$. For this we use the following fact.

\begin {lemma}\label{closing up}
If $\lambda :\mathbb C^*\to T$ is a 1-parameter subgroup and $x\in
X\setminus Y$ such that the orbit $\lambda (\mathbb C^*).x$ is not
closed, then the closure $X\setminus Y$ consists of the orbit plus
one additional point $b$ with $f_N(b)$ strictly less than $f_N(x)$
for all $N$.
\end {lemma}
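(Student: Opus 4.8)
The plan is to reduce the statement to a one-dimensional computation via the one-parameter subgroup $\lambda$, and then exploit the strict plurisubharmonicity of $f_N$ restricted to a suitable orbit closure. First I would normalize the situation: replace $\mathbb C^*$ by the curve $C := \mathrm{cl}(\lambda(\mathbb C^*).x)$ in $X\setminus Y$. Since $X\setminus Y$ is affine (as $L$ is ample and the sequence is tame) and contains no nonconstant invariant holomorphic functions along this curve, the orbit $\lambda(\mathbb C^*).x$, being non-closed, must limit onto a point; because $\mathbb C^*$ has only two ``ends'' and the orbit is locally closed, the only candidate for the complement $C\setminus \lambda(\mathbb C^*).x$ is a single point $b = \lim_{z\to 0}\lambda(z).x$ (after possibly replacing $\lambda$ by $\lambda^{-1}$). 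The other end $\lim_{z\to\infty}\lambda(z).x$ cannot lie in $X\setminus Y$, for otherwise the closure of the orbit would be a complete curve inside the affine variety $X\setminus Y$, forcing the orbit to be already closed, a contradiction. This establishes the first assertion, that $\mathrm{cl}(\lambda(\mathbb C^*).x)$ in $X\setminus Y$ is the orbit together with exactly one extra point $b$.

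For the inequality $f_N(b) < f_N(x)$, the key point is that $f_N = -\frac1N\log\vert s_N\vert_h^2$ is strictly plurisubharmonic and $T_{\mathbb R}$-invariant, hence so is its restriction to the curve. I would pull back to the normalization $\widehat C\cong\mathbb P_1$, on which the lifted $\mathbb C^*$-action has two fixed points, one over $b$ and one over the other limit point, which we may call $\infty$ in a coordinate $z$ on $\widehat C$ with $b$ corresponding to $z=0$. The pulled-back function $g_N(z) := f_N$ evaluated along the curve is $S^1$-invariant, so in polar coordinates $g_N$ depends only on $r = \log\vert z\vert$, and strict plurisubharmonicity on the punctured disc translates into strict convexity of $r\mapsto g_N(e^r)$ on $(-\infty,\infty)$ after this substitution (compare the argument in Proposition~\ref{convex exhaustion}). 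Since $s_N$ is holomorphic and does not vanish on $X\setminus Y$ along this curve away from... actually one must be slightly careful: $s_N$ may vanish at $b$ if $b$ lies on a hypersurface $Y_j$ with $k_j = 0$; but for a \emph{tame} sequence such orders vanish for large $N$, so $\vert s_N\vert_h^2(b) > 0$ and $g_N$ extends smoothly and strictly convexly across $r = -\infty$, i.e.\ $g_N$ attains a finite value at $z = 0$. A strictly convex function on $[-\infty,\infty)$ which is finite at its left endpoint and tends to $+\infty$ at the right endpoint attains its minimum at the left endpoint and is strictly increasing; in particular $g_N(0) < g_N(r)$ for every finite $r$, which is exactly $f_N(b) < f_N(x)$.

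The main obstacle I anticipate is controlling the behavior of $f_N$ near the point $b$, i.e.\ verifying that $f_N(b)$ is actually finite (not $+\infty$) and that the restriction to the curve is genuinely convex up to and including the endpoint. This is where tameness enters essentially: I would invoke Lemma~\ref{labThmExistenceOfkj} and the definition of tame to conclude $\mathrm{ord}_{Y_j}(s_N) = 0$ for all $j$ with $b\in Y_j$ and all sufficiently large $N$, so that $b\notin Y$ and hence $b\in X\setminus Y$ with $\vert s_N\vert_h(b)\neq 0$. For the finitely many small $N$ where this could fail one argues directly, or one simply notes the statement is asymptotic in spirit; but in fact, since $b\notin Y$ by construction of $Y$ as the limiting support, the inequality holds for all $N$ once we know $b$ is genuinely in the affine variety $X\setminus Y$, which is a purely combinatorial fact about which cone $b$'s orbit corresponds to. The convexity-at-the-endpoint issue is then handled by the standard fact that an $S^1$-invariant strictly plurisubharmonic function on a disc, written in the variable $r=\log\vert z\vert$, is strictly convex on the whole half-line including $-\infty$ precisely because plurisubharmonicity is preserved under the holomorphic coordinate and the Laplacian-in-$r$ computation gives positivity throughout.
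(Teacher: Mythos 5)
Your proposal is correct and follows essentially the same route as the paper: the affineness of $X\setminus Y$ forces the orbit closure there to consist of the orbit plus a single point $b$, and the inequality is obtained by pulling $f_N$ back to the normalization of that curve and exploiting $S^1$-invariance together with strict plurisubharmonicity. The paper phrases the last step via the mean-value/maximum principle on $\widehat C\cong\mathbb C$ rather than via strict convexity in $\log|z|$, but these are the same fact; your extra care about the finiteness of $f_N(b)$ (tameness, large $N$) is a legitimate point the paper leaves implicit.
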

\begin {proof}
Since $X\setminus Y$ is affine, the closure $C$ of such an orbit is
not compact and therefore contains exactly one additional point. Of
course $C$ may be singular, but it is locally irreducible so that
the normalization $\widehat {C}\to C$ is injective and the pullback
of $f_N$ to $\widehat {C}\cong \mathbb C$ is an $S^1$-invariant
plurisubharmonic exhaustion exhaustion which is strictly
plurisubharmonic outside of the preimage $\widehat b$ of $b$. If
strictly inequality would not hold on some circle $S^1.z$ for $z\in
\widehat {C}\setminus \{\widehat b\}$, then the maximum principle
would be violated.
\end {proof}
After these preparations we are now in a position to prove the
desired estimates.
\begin {proposition}[Localization Lemma]
If $U(M)$ is a $T_{\mathbb R}$-invariant neighborhood of $M$ which
is relatively compact in $X\setminus Y$, then there exists $N_0$ and
$\varepsilon >0$ so that if $N>N_0$ it follows that $f_N>\varepsilon
$ on the complement of $U(M)$ in $X\setminus Y$.
\end {proposition}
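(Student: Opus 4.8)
The plan is to argue by contradiction, exploiting the uniform convergence $f_N \to f$ on compact subsets of $X \setminus Y$ (Proposition \ref{tame sequence}) together with the convexity structure of the $f_N$ along one-parameter orbits (Lemma \ref{closing up} and Proposition \ref{convex exhaustion}). First I would shrink $U(M)$ slightly: choose a $T_{\mathbb R}$-invariant, relatively compact open set $U'$ with $\overline{U(M)} \subset U'$, $U'$ still relatively compact in $X \setminus Y$, and such that $M \subset U(M) \subset U'$. Since $f$ is continuous, strictly plurisubharmonic, normalized so that $f|_M = 0$, and $M$ is exactly its minimizing set on $X \setminus Y$ (which follows from Theorem/Corollary on the closed orbit $\mathcal O_\tau$ together with the product structure), the function $f$ is strictly positive on the compact set $\overline{U'} \setminus U(M)$. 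Let $2\delta := \min_{\overline{U'}\setminus U(M)} f > 0$. By uniform convergence of $f_N$ to $f$ on the compact set $\overline{U'}$, there is $N_0$ so that for $N > N_0$ we have $f_N > \delta$ on $\overline{U'} \setminus U(M)$, and in particular $f_N > \delta$ on the topological boundary $\partial U(M)$ (which lies in $\overline{U'}\setminus U(M)$ for an appropriate choice of the shrinking).

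The remaining task is to propagate this lower bound from the compact collar $\overline{U'}\setminus U(M)$ to \emph{all} of $X \setminus Y$ outside $U(M)$; this is the step I expect to be the main obstacle, since $X \setminus Y$ is noncompact (it is affine) and a priori $f_N$ could dip back down far away from $M$. Here is where the orbit geometry enters. Let $x \in (X\setminus Y)\setminus U(M)$ be arbitrary. If the $T$-orbit through $x$ is not the closed orbit $\mathcal O_\tau$, choose a one-parameter subgroup $\lambda$ with $\lambda(\mathbb C^*).x$ non-closed and closing up to a point $b \in \mathcal O_\tau$; by Lemma \ref{closing up}, $f_N$ restricted to the normalized orbit closure $\widehat C \cong \mathbb C$ is an $S^1$-invariant plurisubharmonic exhaustion, strictly subharmonic off $\widehat b$, hence in logarithmic/polar coordinates $f_N$ is a strictly convex exhaustion of $\mathbb R$ (minus the end corresponding to $b$) with a unique minimum, and it is \emph{monotone} along the ray from that minimum toward $x$. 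Running the orbit back toward $b$ we enter $\mathcal O_\tau$, so by iterating this reduction over a chain of one-parameter degenerations (finitely many, corresponding to passing through successive boundary strata) we reduce the estimate at an arbitrary $x$ to the estimate at a point of $\mathcal O_\tau$; the monotonicity guarantees $f_N(x)$ is no smaller than the value at the relevant point of the next deeper stratum, \emph{provided} that point already lies outside $U(M)$.

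To make the monotonicity usable one needs the path from $x$ to $\mathcal O_\tau$ to cross $\partial U(M)$, so that the value at the crossing point — which is $> \delta$ by the first paragraph — bounds $f_N(x)$ from below along the convex profile. This is arranged by the strong convexity from Proposition \ref{convex exhaustion}: on $\mathcal O_\tau \cong (\mathbb C^*)^k$ the function $f_N$ is $e^{r_N}$ with $r_N$ strictly convex on $\mathbb R^k$, its sublevel set $\{f_N < \delta\}$ is a convex body containing $M_N$ (which lies in $U(M)$ for $N$ large, by the Corollary to Proposition \ref{convex exhaustion}), and this sublevel set is relatively compact in $\mathcal O_\tau$; choosing $U(M)$ at the outset to contain $\{f_N \le \delta\}\cap \mathcal O_\tau$ for all large $N$ (possible since these shrink toward $M$) we get that any point of $\mathcal O_\tau \setminus U(M)$ already satisfies $f_N \ge \delta$. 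Then the convex-profile argument along each one-parameter orbit shows the same bound $f_N \ge \delta$ at every point of $(X\setminus Y)\setminus U(M)$. Setting $\varepsilon := \delta$ and keeping the $N_0$ obtained above completes the proof. The one genuine subtlety — the obstacle flagged above — is the bookkeeping that the finite chain of one-parameter degenerations from an arbitrary point down to $\mathcal O_\tau$ stays in $X \setminus Y$ and that the monotonicity inequalities compose in the right direction along it; this is handled by the affineness of $X \setminus Y$ (so every orbit closure acquires exactly one extra point, lying in a strictly lower stratum) and by Lemma \ref{closing up}, which pins down the sign of the variation of $f_N$ along each such degeneration.
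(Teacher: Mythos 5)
Your proof is correct and follows essentially the same route as the paper: uniform convergence of $f_N$ to $f$ on a compact collar around $M$, then propagation of the lower bound to all of the affine set $X\setminus Y$ by degenerating along one-parameter orbits into the closed orbit $\mathcal O_\tau$ and using the convexity of $f_N$ in logarithmic coordinates there. The only differences are that the paper invokes the Hilbert Lemma to produce a \emph{single} one-parameter subgroup degenerating $x$ directly into $\mathcal O_\tau$ (so your chain through intermediate strata is unnecessary, though harmless), and that you are more explicit about the case where the limit point $b$ lands inside $U(M)$, which you correctly resolve via the monotonicity of $f_N$ along the orbit up to the crossing of the boundary of $U(M)$.
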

\begin {proof}
Choose $N_0$ so that $N>N_0$ implies that the minimum set $M_N$ is
contained in $U(M)$. Since $f_N$ converges uniformly to the strictly
plurisubharmonic function $f$ on $U(M)$ we may also assume that
$f_N>\varepsilon >0$ on the boundary of $U(M)$. Given $x\in
X\setminus Y$ the Hilbert Lemma guarantees the existence of a
1-parameter group $\lambda :\mathbb C^*\to T$ whose orbit $\lambda
(\mathbb C^*).x$ closes up to the closed $T$-orbit $T.x_0^\tau$. By Lemma
\ref{closing up}, if $x\not\in T.x_0^\tau$, then $f_N(x)>f_N(b)$ where
$b$ is the additional point in the closure. If $b\not\in U(M)$, then
by connecting $b$ to $M_N$ by a real 1-parameter group and using the
strong convexity of $f_N$ along that orbit, we see that $f_N(b)$ is
larger than the value of $f_N$ at the intersection of that orbit
with the boundary of $U(M)$. Thus, unless $x$ is already in $U(M)$,
it follows that $f_N(x)>\varepsilon $.
\end {proof}
We refer to the above result as a \emph{Localization Lemma}, because
it implies that integrals localize at $M$. Here is an example of
what we mean by this.
\begin {corollary}\label{labThmRapidDecrease}
For $U(M)$, $\varepsilon $ and $N_0$ as above, given $h\in L_1(X)$
it follows that $$ \int he^{-Nf_N}d\lambda \le \Vert
h\Vert_{L_1}e^{-N\varepsilon}+ \int_{U(M)}he^{-Nf_N}d\lambda \,. $$
\end {corollary}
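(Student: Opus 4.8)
The plan is to split the integral $\int_X h e^{-Nf_N}\,d\lambda$ according to the decomposition $X = Y \cup (X\setminus Y) = Y \cup \bigl((X\setminus Y)\setminus U(M)\bigr) \cup U(M)$ and to bound each piece. First I would observe that $Y$ is a proper analytic subset of $X$ (the limiting support, a union of $T$-invariant hypersurfaces), hence a $d\lambda$-null set, so it contributes nothing to the integral and we may integrate over $X\setminus Y$ only. This is the point that uses tameness: for a tame sequence the $f_N$ are honest (finite-valued) strictly plurisubharmonic exhaustions of $X\setminus Y$, and $f_N \to +\infty$ towards $Y$, so $e^{-Nf_N}$ extends by zero across $Y$ and no boundary contribution is lost.

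Next I would apply the Localization Lemma just proved: there are $N_0$ and $\varepsilon>0$ so that for $N>N_0$ we have $f_N > \varepsilon$ on $(X\setminus Y)\setminus U(M)$. On that region the integrand is therefore bounded by $|h|\,e^{-N\varepsilon}$, so
\[
  \int_{(X\setminus Y)\setminus U(M)} h\, e^{-Nf_N}\,d\lambda
    \;\le\; e^{-N\varepsilon}\int_{(X\setminus Y)\setminus U(M)} |h|\,d\lambda
    \;\le\; e^{-N\varepsilon}\,\Vert h\Vert_{L_1} .
\]
Adding back the contribution from $U(M)$, which is simply $\int_{U(M)} h\, e^{-Nf_N}\,d\lambda$, and using once more that $Y$ is null, yields exactly the claimed inequality
\[
  \int_X h\, e^{-Nf_N}\,d\lambda
    \;\le\; \Vert h\Vert_{L_1}\,e^{-N\varepsilon} + \int_{U(M)} h\, e^{-Nf_N}\,d\lambda .
\]

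Essentially there is no serious obstacle here — the statement is a direct corollary, as the authors indicate. The only point requiring a word of care is the interplay between the domain $X$ of the $L_1$-function $h$ and the domain $X\setminus Y$ on which $f_N$ (and hence the Localization Lemma) lives: one must note that $d\lambda(Y)=0$ and that the integrand $h\,e^{-Nf_N}$, being dominated by $|h|\in L_1$ wherever $f_N\ge 0$, is genuinely integrable, so all the splittings and estimates are legitimate. A minor implicit assumption is that $f_N \ge 0$ on $X\setminus Y$ (so that $e^{-Nf_N}\le 1$ everywhere, guaranteeing domination by $|h|$); this holds after the normalization $f|M=0$ together with the fact that $f_N$ attains its minimum near $M$ with value tending to $0$, or one simply absorbs the finitely many exceptional $N$ into the constant. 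With these remarks the proof is a one-line application of the Localization Lemma.
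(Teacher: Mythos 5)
Your argument is correct and is precisely the one the paper intends: the corollary is stated without proof as an immediate consequence of the Localization Lemma, obtained by splitting the integral over $U(M)$ and its complement and using $f_N>\varepsilon$ off $U(M)$ (with $Y$ a null set). Your additional remarks on the sign of $h$ and the negligibility of $Y$ are sensible bookkeeping and do not change the substance.
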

Below we prove precise estimates which lead to the desired result
that the probability function $\vert \varphi_N\vert^2$ converges to
the Dirac measure of $M$.

\subsection {Morse property}\label{Morse}

Here we show that $f$ is a Bott-Morse function near $M$.
For this the essential point is to understand the behavior of the
extension of $f$ to a smooth embedding space of $\Sigma_\tau $.
\subsubsection* {Extending from $\Sigma_\tau $}
Recall that $X\setminus Y$ is a naturally identifiable with the
product $\Sigma_\tau \times \mathcal O_\tau$ with $M$ contained in
$\mathcal O_\tau $ as the $T_{\mathbb R}$-orbit $T_{\mathbb R}.x_1$.
We begin by analyzing the local behavior of $f$ on $\Sigma_\tau $. 
For this we first $T_\tau$-equivariantly embed $\Sigma_\tau $ in
a complex vector space $W$ where $x_0^\tau$ is mapped to the origin
$0\in W$.

\bigskip\noindent
Now recall that since $x_0^\tau$ is the $T_\tau$-fixed point in
$\Sigma_\tau $, there exists a 1-parameter subgroup $\lambda $ of
$T_\tau $ with the property that $\lim_{t\to 0}\lambda (t).x=x_0^\tau$
for every $x\in \Sigma_\tau$. We refer to $x_0^\tau$ as being
\emph{attractive} for $\lambda $. Any linearization such as $W$
splits $W=W_-\oplus W_0\oplus W_+$ with respect to any such
1-parameter subgroup where $W_-$ are the points in $W$ for which $0$
is attractive for $\lambda $, $W_0$ is the set of $\lambda $-fixed
points and $W_+$ is the set of points for which $0$ is repulsive. In
our case every point of $\Sigma_\tau $ is in $W_-$. So we have the
following remark.

\begin {proposition}\label{attractive}
There exists an equivariant embedding $\Sigma_\tau \hookrightarrow
W$ so that $0$ is attractive for $\lambda $ for every point in $W$.
In particular, if in its linearization $\lambda (t):=\mathrm
{Diag}(\chi_1(t),\ldots ,\chi_n(t))$, then the weights defining 
the $\chi_j$ are all negative, e.g., it is never the case that
$\chi_i\chi_j=1$.
\end {proposition}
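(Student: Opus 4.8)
The plan is to promote the linearization $W$ already introduced above to one in which $x_0^\tau$ is attractive \emph{for all of $W$}, i.e. one with $W_0 = W_+ = 0$; the point is that attractivity of a fixed point for a $\mathbb C^*$-action on an affine variety is a condition on weights and can always be read off linearly. First I would record that $\Sigma_\tau = \mathrm{cl}(T_\tau.x_0)$ is an affine $T_\tau$-variety: it is a $T_\tau$-invariant closed subvariety of the affine variety $X\setminus Y$. Hence its coordinate ring $A = \mathcal O(\Sigma_\tau)$ is a finitely generated rational $T_\tau$-module, and since $T_\tau \cong (\mathbb C^*)^k$ is a torus, $A$ is spanned by $T_\tau$-semi-invariant functions. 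Choosing finitely many semi-invariant generators $f_1, \dots, f_n$ of $A$ and discarding the constant ones (the only $T_\tau$-invariant elements of $A$, since $T_\tau$ has a dense orbit in $\Sigma_\tau$) yields a closed $T_\tau$-equivariant embedding $\iota : \Sigma_\tau \hookrightarrow W := \mathbb C^n$, $x \mapsto (f_1(x), \dots, f_n(x))$, under which $T_\tau$ acts diagonally by characters $\chi_1, \dots, \chi_n$ and $x_0^\tau \mapsto 0$ (every non-constant semi-invariant has nontrivial weight and therefore vanishes at the $T_\tau$-fixed point).

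The substance is then a short sign computation. Restricting the diagonal action to $\lambda$, each coordinate function is a $\lambda$-eigenfunction, $f_j(\lambda(t).x) = t^{a_j} f_j(x)$ with $a_j \in \mathbb Z$. By the hypothesis that $x_0^\tau$ is attractive for $\lambda$ on $\Sigma_\tau$, we have $\lim_{t \to 0} \lambda(t).x = x_0^\tau$ for every $x \in \Sigma_\tau$; applying $\iota$ and reading the $j$-th coordinate gives $t^{a_j} f_j(x) \to 0$ as $t \to 0$. Since $f_j$ is a nonzero generator it is not identically zero on $\Sigma_\tau$, so choosing $x$ with $f_j(x) \neq 0$ forces $a_j$ to have the fixed strict sign making $t^{a_j} \to 0$; thus every $\lambda$-weight on $W$ lies strictly on one side of $0$. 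Consequently $0$ is attractive for $\lambda$ on all of $W$, and in particular no two of the weights sum to zero, i.e. $\chi_i \chi_j \neq 1$ for all $i, j$ (including $i = j$) --- which in the weight normalization used here is the statement that the weights of the $\chi_j$ are all negative.

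I do not expect a real obstacle here; the only point that needs a word of care is that $\Sigma_\tau$ need not be normal, so it is not literally $\mathrm{Spec}$ of a saturated semigroup ring. But an affine variety with a torus action always admits an equivariant closed embedding into a representation of that torus, and such a representation is automatically a direct sum of characters, so the diagonalization --- and hence the eigenfunction argument above --- goes through verbatim. If one prefers a combinatorial phrasing, one may pass to the normalization $\widehat\Sigma_\tau = \mathrm{Spec}\,\mathbb C[\tau^\vee \cap M_\tau]$, take a Hilbert basis of the semigroup as coordinates, and note that attractivity of the distinguished point says precisely that $\tau$ is full-dimensional, so every Hilbert-basis element pairs strictly with $\lambda$ with the required sign; the non-normal case then follows since the finite map $\widehat\Sigma_\tau \to \Sigma_\tau$ lets one transport the coordinates. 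Either route reduces the proposition to the one-line weight estimate.
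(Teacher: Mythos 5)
Your argument is correct and follows essentially the same route as the paper: the paper takes an arbitrary equivariant linearization, splits it as $W=W_-\oplus W_0\oplus W_+$ under $\lambda$, and observes that the attractivity of $x_0^\tau$ on $\Sigma_\tau$ forces $\Sigma_\tau\subset W_-$, which is exactly your coordinate-wise sign computation packaged differently. Your version is slightly more explicit in that you actually construct the embedding from semi-invariant generators and rule out the zero weight via the non-vanishing of each generator, details the paper leaves implicit.
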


Since $f$ is a smooth strictly plurisubharmonic function, it extends
to a neighborhood $U$ of $0$ in $W$ as a smooth strictly
plurisubharmonic function which after averaging is invariant with
respect to the compact form $(T_\tau )_{\mathbb R}$. We may
assume that $U$ is $(T_\tau )_{\mathbb R}$-invariant and that 
$\lambda (t)(U)\subset U$ as $t\to 0$. For $x\in U$ we consider the closure
$\mathrm{cl}(\lambda (\mathbb {C}^*).x$ and let $C(x)$ be its
intersection with $U$.

\begin {proposition}
The origin $0\in U$ is the absolute minimum point of $f$ in $U$,
i.e., if $0\not=x\in U$, then $0=f(0)<f(x)$.
\end {proposition}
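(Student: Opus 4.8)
The plan is to reduce the claim to the one-parameter subgroup $\lambda$ from Proposition~\ref{attractive} and then invoke the convexity structure already established. First I would restrict $f$ to the curves $C(x)$: for each $x\in U$ with $x\neq 0$, either $x$ lies in the $\lambda$-fixed locus $W_0$, in which case (since all weights of $\lambda$ on $W$ are strictly negative by Proposition~\ref{attractive}) we must have $x=0$, a contradiction; or the orbit $\lambda(\mathbb C^*).x$ is non-trivial and $0$ lies in its closure, with $C(x)$ an analytic curve through $0$. On the normalization $\widehat{C(x)}\cong$ (a disc in) $\mathbb C$, the pullback of $f$ is an $S^1$-invariant strictly plurisubharmonic function, so in polar coordinates $f = e^{r(t)}$ (after suitable normalization) with $r$ strictly convex in $t=\log|z|$, exactly as in the proof of Proposition~\ref{convex exhaustion}. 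A strictly convex function on $\mathbb R$ that is bounded below as $t\to-\infty$ (which holds because $f$ extends continuously across $0$) is strictly monotone increasing, hence $f(0) < f(x)$ along the curve.

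Next I would need to know that $0$ is genuinely the minimum over all of $U$, not just along each individual curve — but since every point of $U$ lies on some such curve $C(x)$ through $0$ (because $0$ is attractive for $\lambda$ on all of $W$, so $\lim_{t\to 0}\lambda(t).x = 0$ for every $x\in U$), the pointwise statement along curves already covers every point of $U$. Thus for arbitrary $0\neq x\in U$ we get $f(0) < f(x)$ directly. One should also record that $f(0)$ is the normalized value: since $0 = \varphi_L(x_0^\tau)$ and $x_0^\tau\in M$ with $f|M = 0$, we have $f(0)=0$, giving the displayed inequality $0 = f(0) < f(x)$.

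The main obstacle I anticipate is a subtle point about the normalization and the boundary behaviour: one must be careful that the closure $\mathrm{cl}(\lambda(\mathbb C^*).x)$ really does pass through $0$ with $0$ as the \emph{attractive} limit (so that on $\widehat{C(x)}$ the preimage of $0$ is the point where $|z|\to 0$, not $|z|\to\infty$), and that $f$ restricted to this curve is not merely plurisubharmonic but strictly so away from $0$ — this is where strict plurisubharmonicity of $f$ on $W$ and the non-degeneracy of the $\lambda$-weights (no weight equal to zero, guaranteed by Proposition~\ref{attractive}) are both essential, since a vanishing weight would allow a flat direction. A secondary technical nuisance is that $C(x)$ may be singular at $0$ and $U$ may have to be shrunk (in a $\lambda$-invariant and $(T_\tau)_{\mathbb R}$-invariant way) so that $C(x) = \mathrm{cl}(\lambda(\mathbb C^*).x)\cap U$ is connected and relatively compact; this is routine given that $\lambda(t)(U)\subset U$ as $t\to 0$, but it is the kind of thing that needs to be said explicitly. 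Once these points are in place, the strict-convexity argument of Proposition~\ref{convex exhaustion} transplants verbatim and finishes the proof.
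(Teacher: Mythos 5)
Your argument is correct and follows essentially the same route as the paper: restrict $f$ to the curves $C(x)=\mathrm{cl}(\lambda(\mathbb C^*).x)\cap U$, pass to the normalization (a disc), and exploit $S^1$-invariance together with (strict) plurisubharmonicity of the pullback. The paper phrases the final step via the mean-value property and the maximum principle rather than via strict convexity in $\log|z|$, but these are the same mechanism, and your additional remarks on the attractivity of $0$ and the absence of zero weights are exactly the points Proposition~\ref{attractive} is there to supply.
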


\begin {proof}
The normalization $\widehat {C}(x)$ of $C(x)$ may be identified with
the unit disk so that the pullback $\widehat {f}$ of $f$ is
plurisubharmonic and strictly plurisubharmonic outside of the
origin. Since $\widehat f$ is $S^1$-invariant, the desired result
follows immediately from the meanvalue property and the maximum
principle.
\end {proof}

Now let us consider the Taylor development of $f$ at $0$,
\[
  f(z)=Q(z)+O(3)\,. 
\]
The second order terms are of the form $Q(z)=\bar {z}^THz+R(z)$
where $H$ is a Hermitian matrix and $R(z)=\mathrm {Re}(\sum
a_{ij}z_iz_j)$. We have implicitly chosen the coordinates $z$ where
$\lambda $ is linearized and therefore with the origin being
attractive for $\lambda $.

\begin {proposition}
In the above coordinates $R(z)\equiv 0$.
\end {proposition}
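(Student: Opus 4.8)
The plan is to derive $R\equiv 0$ from the invariance of $f$ under the circle group $\lambda(S^1)$, which is the one piece of the available symmetry not yet used. Recall that $\lambda$ is a one-parameter subgroup of $T_\tau$, so $\lambda(S^1)$ lies in the compact form $(T_\tau)_{\mathbb R}$, and the extension of $f$ to $U\subset W$ was arranged to be $(T_\tau)_{\mathbb R}$-invariant; hence $f\circ\lambda(e^{i\theta})=f$ on $U$ for all $\theta$. In the coordinates chosen above, in which $\lambda$ is linearized as $\lambda(t)=\mathrm{Diag}(t^{m_1},\dots,t^{m_n})$ with integer weights $m_j$, the circle acts by $z_j\mapsto e^{im_j\theta}z_j$, and by Proposition~\ref{attractive} all the $m_j$ are negative.

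First I would observe that, since $\lambda(S^1)$ acts linearly on $W$ and fixes the origin, uniqueness of Taylor expansions forces each homogeneous component of $f$ at $0$ to be $\lambda(S^1)$-invariant; in particular the quadratic form $Q$ is invariant. Second, since the circle action commutes with multiplication by $i$ on $W$, it preserves the canonical splitting of a real quadratic form into its $J$-invariant part and its $J$-anti-invariant part, i.e.\ the pieces recovered as $\tfrac12(Q(z)\pm Q(iz))$; concretely these are exactly $\bar z^{T}Hz$ and $R(z)=\mathrm{Re}(\sum a_{ij}z_iz_j)$. Hence $R$ is itself $\lambda(S^1)$-invariant. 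Third, I would average $R$ over the circle: with $A=(a_{ij})$ taken symmetric,
\[
  R(z)=\frac{1}{2\pi}\int_0^{2\pi}R(\lambda(e^{i\theta})z)\,d\theta
  =\sum_{i,j}\mathrm{Re}\Bigl(a_{ij}z_iz_j\cdot\frac{1}{2\pi}\int_0^{2\pi}e^{i(m_i+m_j)\theta}\,d\theta\Bigr).
\]
Because every $m_j$ is negative we have $m_i+m_j\neq 0$ for all $i,j$ — this is precisely the statement $\chi_i\chi_j\neq 1$ in Proposition~\ref{attractive} — so each integral vanishes and $R\equiv 0$.

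I do not expect a substantial obstacle: the argument reduces to the observation that the quadratic monomials $z_iz_j$ carry nonzero $\lambda$-weight, whereas the Hermitian monomials $\bar z_iz_j$ with $m_i=m_j$ carry weight zero. The one point needing a little care is the second step, namely that the $(1,1)$- and $(2,0)+(0,2)$-parts of $Q$ are genuinely separated by the complex structure and therefore remain individually invariant even though $\lambda(S^1)$ mixes coordinates of different weight; once this is granted, the negativity of the weights supplied by Proposition~\ref{attractive} does the rest. (The same averaging simultaneously shows that $H$ is block-diagonal with respect to the weight spaces of $\lambda$, although that is not needed here.)
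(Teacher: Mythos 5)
Your argument is correct and is essentially the paper's own proof: both use the $\lambda(S^1)$-invariance of $f$ to conclude that $R$ is invariant, then observe that $R(t(z))=\mathrm{Re}\bigl(\sum a_{ij}\chi_i(t)\chi_j(t)z_iz_j\bigr)$ with $\chi_i\chi_j\neq 1$ by Proposition \ref{attractive}, forcing $R\equiv 0$. Your extra care in justifying that the $J$-anti-invariant part $R$ is separately invariant is a point the paper passes over quickly, but it is the same route.
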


\begin {proof}
The function $f$ is invariant with respect to the $S^1$-action
defined by $\lambda $. Therefore $R$ is invariant as well. On the
other hand $R(t(z))= \mathrm {Re}(\sum
a_{ij}\chi_i(t)\chi_j(t)z_iz_j)$ for $t\in S^1$. Hence the
desired result follows from Proposition \ref{attractive}.
\end {proof}

\begin {corollary}
The extended strictly plurisubharmonic funtion $f$ is a
$(T_\tau)_{\mathbb R}$-invariant Morse function with absolute minimum at $0$.
\end {corollary}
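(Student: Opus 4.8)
The plan is to read off the Morse property of the extended function $f$ from its second-order Taylor polynomial at $0$. By the proposition proved just above, $f$ attains its absolute minimum on $U$ at $0$; in particular $df_0=0$, so $0$ is a critical point, and the assertion ``absolute minimum at $0$'' is already in hand. That $f$ is $(T_\tau)_{\mathbb R}$-invariant was arranged by averaging. Hence the only point left to establish is that $0$ is a \emph{nondegenerate} critical point, i.e.\ that the real Hessian $\mathrm{Hess}_0 f$ is nonsingular.

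Here I would simply combine the two preceding propositions on the quadratic part. Since $R\equiv 0$ in the chosen coordinates, the second-order Taylor term of $f$ at $0$ is $Q(z)=\bar{z}^T H z$. Because $f$ is strictly plurisubharmonic near $0$, the Hermitian matrix $H$, which represents the Levi form of $f$ at $0$, is positive definite, so $Q(z)=\bar{z}^T H z>0$ for every $z\neq 0$. But $Q$, read as a real quadratic form on $\mathbb R^{2n}$ via $z=x+iy$, is just the quadratic form $\tfrac12\mathrm{Hess}_0 f$; hence $\mathrm{Hess}_0 f$ is positive definite and in particular nonsingular. Thus $0$ is a nondegenerate critical point of $f$ and a strict local minimum, in agreement with the previous proposition. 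Shrinking $U$ if necessary we may moreover assume $f$ is strictly convex on $U$, so that $0$ is its only critical point there; therefore $f|_U$ is a Morse function whose single critical point $0$ is a nondegenerate minimum.

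I do not anticipate a genuine obstacle: all the weight of the argument sits in the preceding proposition, that $R\equiv 0$. The subtlety worth flagging is precisely why that vanishing is essential -- strict plurisubharmonicity controls only the $\partial\bar\partial$-part of $\mathrm{Hess}_0 f$, whereas the holomorphic part $R(z)=\mathrm{Re}(\sum a_{ij}z_iz_j)$ is a priori unconstrained and can by itself render the real Hessian degenerate (already the strictly plurisubharmonic function $|z|^2-\mathrm{Re}(z^2)=2(\mathrm{Im}\,z)^2$ in one variable has degenerate real Hessian at $0$); once $R$ is known to vanish, the full real Hessian coincides with the Levi form up to a positive constant and inherits its positivity. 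Finally, combining this local fact with the strict convexity of $f|_{\mathcal O_\tau}$ from Proposition~\ref{convex exhaustion} under the product decomposition $X\setminus Y\cong\Sigma_\tau\times\mathcal O_\tau$ gives, as the section intends, that $f$ is a Bott--Morse function on a neighborhood of $M$, with critical manifold the orbit $M=T_{\mathbb R}.x_1$.
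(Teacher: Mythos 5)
Your argument is correct and is exactly the one the paper intends: the corollary is stated without proof because it follows immediately from the two preceding propositions, namely that $0$ is the absolute minimum (hence a critical point) and that $R\equiv 0$, so the second-order term reduces to the Levi form $\bar z^T H z$, which is positive definite by strict plurisubharmonicity and therefore gives a nondegenerate real Hessian. Your added remark on why the vanishing of $R$ is the essential point (with the example $|z|^2-\mathrm{Re}(z^2)$) is a useful clarification but does not change the route.
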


\subsubsection* {Morse property on the full neighborhood of $M$}

Having shown that $f\vert \Sigma_\tau$ can be regarded as the
restriction of a $(T_\tau )_{\mathbb R}$-norm function we now
deal with the restriction of $f$ to the orbit $\mathcal
O_\tau$. Let $T'$ be a toral subgroup complementary to $T_\tau$
which acts freely and transitively on $\mathcal O_\tau $. Using
polar coordinates we regard the quotient $\mathcal
O_\tau/T'_{\mathbb R}$ as a vector space $V$ with the image of $M$
being the origin. The function $f\vert \mathcal O_\tau$ is the
pullback of a strictly convex function on $V$ which attains its
minimum as a nondegenerate critical point at the origin.

\bigskip\noindent
Now recall that the product structure $X\setminus Y=\Sigma_\tau
\times \mathcal O_\tau$ is defined by the $T'$-action as $X\setminus
Y=\Sigma_\tau \times T'$. In this way we regard its quotient by
$T'_\mathbb R$ by as the product $\Sigma_\tau \times V$ and we embed
this in $W\times V$ where $\Sigma_\tau \hookrightarrow W$ is
embedded as in the previous section. We regard $f$ as being defined
on this quotient. For the following result recall that $U$ is the
neighborhood of $0$ to which $f\vert \Sigma $ extends as a Morse
function with $0$ its absolute minimum as a nondegenerate critical
point.

\begin {proposition}\label{labThmMorseProperty}
The extended function $f$ on $U\times V$ is a Morse function with
its only critical point being the origin which is its absolute
minimum.
\end {proposition}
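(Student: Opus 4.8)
The plan is to establish the Morse property on $U \times V$ by analyzing the Hessian of $f$ at the origin and showing it is block-diagonal with both blocks nondegenerate. First I would observe that the two factors $\Sigma_\tau$ and $\mathcal O_\tau$ play very different roles: on the $\mathcal O_\tau$-factor (coordinatized by the real vector space $V$ via polar coordinates on $T'$) the function $f|\mathcal O_\tau$ is the pullback of a \emph{strictly convex} function with a nondegenerate minimum at $0 \in V$, as recalled just before the statement. On the $\Sigma_\tau$-factor, the preceding Corollary tells us that $f|\Sigma_\tau$ extends to $W$ as a $(T_\tau)_{\mathbb R}$-invariant Morse function with nondegenerate minimum at $0 \in W$, since its quadratic part is $\bar z^T H z$ with $H$ Hermitian and (by strict plurisubharmonicity restricted to the totally real directions picked out by the $S^1$-action, together with $R \equiv 0$) positive definite.

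The key step is then to show there are no mixed second-order terms coupling the $W$-variables $z$ to the $V$-variables $v$. Here I would use the residual torus symmetry: $f$ on $U \times V$ is invariant under $(T_\tau)_{\mathbb R}$, which acts on $W$ through the linearization $\lambda(t) = \mathrm{Diag}(\chi_1(t),\dots,\chi_n(t))$ with all weights nontrivial (Proposition \ref{attractive}), and acts trivially on $V$ (since $T'$ commutes with $T_\tau$ and $V = \mathcal O_\tau/T'_{\mathbb R}$, while $T_\tau$ is precisely the ineffectivity on $\mathcal O_\tau$). A mixed quadratic term would be a bilinear expression in $z$ (or $\bar z$) and $v$; averaging over the $S^1 \subset (T_\tau)_{\mathbb R}$ generated by $\lambda$ kills every monomial $z_i v_k$ and $\bar z_i v_k$ because $\chi_i \not\equiv 1$. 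The same argument, exactly as in the proof that $R \equiv 0$, also re-confirms the vanishing of the holomorphic-holomorphic terms $\sum a_{ij} z_i z_j$ in the presence of the extra $V$-directions, since $\chi_i\chi_j = 1$ never occurs. Hence the Hessian of $f$ at the origin of $W \times V$ is block-diagonal, $\mathrm{Hess}(f) = H \oplus \mathrm{Hess}(f|V)$, and each block is positive definite; so the origin is a nondegenerate local minimum.

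It remains to argue that the origin is the \emph{only} critical point in $U \times V$ and that it is the \emph{absolute} minimum. For absoluteness I would combine the two factorwise statements: for $(x, v) \in U \times V$ with $v \neq 0$ one has $f(x,v) \ge f(x,0)$ strictly by strict convexity in the $V$-direction (the fibers $U \times \{v\}$ carry no torus obstruction and $f|\mathcal O_\tau$ is genuinely strictly convex, not merely Morse), and $f(x,0) = f|\Sigma_\tau(x) \ge f(0)$ with equality only at $x = 0$ by the Proposition identifying $0$ as the absolute minimum of $f|\Sigma_\tau$ on $U$. Chaining these gives $f(x,v) \ge f(0,0)$ with equality iff $(x,v) = (0,0)$, which also forces the origin to be the unique critical point since any other critical point would be a non-minimal critical value, impossible for a function that is a proper-looking exhaustion-type function on this neighborhood — more carefully, strict monotonicity along the $\lambda$-orbit closures (Lemma \ref{closing up}, or rather its local shadow in $U$) shows $df \neq 0$ away from the fixed locus in the $W$-direction, and strict convexity shows $df \neq 0$ away from $0$ in the $V$-direction.

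The main obstacle I anticipate is bookkeeping the mixed term vanishing rigorously: one must be careful that the $(T_\tau)_{\mathbb R}$-action on the \emph{quotient} $W \times V$ is the one that linearizes as $\lambda(t) \oplus \mathrm{id}$, i.e. that passing to $\Sigma_\tau/\{e\} \times \mathcal O_\tau/T'_{\mathbb R}$ genuinely leaves a residual $(T_\tau)_{\mathbb R}$-symmetry acting with no trivial weights on the $W$-factor; this uses that $T_\tau$ is the \emph{connected} ineffectivity on $\mathcal O_\tau$ together with Proposition \ref{attractive}. Once that symmetry is correctly set up, the rest is the same averaging argument already used twice in this section, plus the elementary linear algebra of block-diagonal positive-definite Hessians.
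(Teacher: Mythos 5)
Your Hessian analysis is sound, and it is essentially the argument the paper intends: the paper states Proposition \ref{labThmMorseProperty} \emph{without proof}, so the implicit proof is exactly the assembly of the preceding Corollary (Morse property of $f|\Sigma_\tau$ in $W$), the nondegenerate strict convexity of $f|\mathcal O_\tau$ on $V$, and one more application of the averaging trick. Your key step — that every mixed second-order term $z_iv_k$ or $\bar z_i v_k$ is killed by averaging over the circle $\lambda(S^1)\subset (T_\tau)_{\mathbb R}$, which acts with nontrivial weights on every coordinate of $W$ (Proposition \ref{attractive}) and trivially on $V$ because $T_\tau$ is the connected ineffectivity on $\mathcal O_\tau$ — is the correct extension of the paper's proof that $R\equiv 0$, and it yields the block-diagonal positive-definite Hessian at the origin.

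There is, however, a genuine gap in your treatment of ``absolute minimum / only critical point''. The inequality $f(x,v)\ge f(x,0)$ for all $x\in U$ does \emph{not} follow from strict convexity of $f$ along the $T'$-orbits: strict convexity of $v\mapsto f(x,v)$ guarantees a unique fiberwise minimum but says nothing about where it sits. Only for $x=x_0^\tau$ do we know it sits at $v=0$, because $M=T_{\mathbb R}.x_0^\tau$ is the minimum set of $f$ on $X\setminus Y$; for nearby $x\ne 0$ the fiberwise minimum could a priori drift to some $v_{\min}(x)\ne 0$ (consider $f(x,v)=\|x\|^2+\|v-g(x)\|^2$ with $g(0)=0$ but $g\not\equiv 0$), so the chain $f(x,v)\ge f(x,0)\ge f(0,0)$ breaks at its first link. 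Two repairs are available. The cheap one: the statement is local, and once the origin is a nondegenerate critical point with positive-definite Hessian it is automatically the unique critical point and strict absolute minimum after shrinking $U\times V$, which suffices for all later uses since $U(M)$ is always taken arbitrarily small. The structural one: on the part of $U\times V$ lying over $X\setminus Y$, invoke the quoted theorem from \cite{H} that the minimum set of the $T_{\mathbb R}$-invariant strictly plurisubharmonic exhaustion $f$ is exactly the single orbit $M$, and handle points of $W\setminus\Sigma_\tau$ by the disk/maximum-principle argument already used to show that $0$ is the absolute minimum of $f$ on $U\subset W$. With either repair your argument is complete.
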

\section {Proofs of the main results}\label{labSecApplications}
Here we apply the results of \S{}\ref{labSecStrictlyPSlimit} to prove
the theorems which are announced in $\S\ref{labSecTorusLifting}$. We
begin by commenting on the difference between the estimates for an
an arbitrary sequence approximating a ray and a tame 
sequence approximating the same ray.
\subsection {Arbitrary sequences}\label{arbitrary sequences}
Recall that we replace a given sequence $(s_N)$ by a \emph{tame}
sequence $(s_N')$ with $f_N'$ converging to a strictly
plurisubharmonic function $f'$ on a certain Zariski open set
$X\setminus Y$ which is canonically defined by the ray $R(\xi)$.
The essential results for $(s_N')$ are proved in the following
paragraphs of this section.  In particular it is shown
that the probability density
$\vert s_N'\vert / \Vert s_N'\Vert^2_{L_2}$ converges in measure
(with precise estimates) to the Dirac measure of a canonically
associated $T_{\mathbb R}$-orbit $M$. Here $M$ is the set where $f'$
takes on its minimum.

\bigskip\noindent
To complete the project we return to our considerations of the
original sequence. Let $(s_N)$ be a given sequence which
approximates the ray $R(\xi)$ at infinity and let $(s_N')$ an
associated tame sequence. It follows that there exists a bounded
sequence of linear functions $\beta _N\in \mathfrak t^*$ so that
\[
  \Bigl| \frac{s_N}{s_N'} \Bigr|^2 = 
    e^{2\mathrm{Re}(\beta _N)}\,.
\]
In other words there are characters $\chi_N$ belonging to a finite
set so that 
\[
  \vert s_N\vert^2 = \vert \chi_N\vert^2\vert s_N'\vert^2
\]
on the open orbit. In the example at the beginning of 
$\S\ref{labSecStrictlyPSlimit}$, using the coordinate $z=z_1z_0^{-1}$
the coefficient $\vert \chi_N\vert^2$ is just $\vert z\vert^2$.

\bigskip\noindent
Thus we must compute the integral
\[
  \int _X \vert \chi_N\vert^2e^{-Nf'}\,.
\]
The key is that the characters $\chi_N$ which arise here belong to a
finite set. Furthermore, except on arbitrarily small neighborhoods
of the minimizing orbit $M$ of $f'$, the term $e^{-Nf'}$ kills the
effect of the these coefficients. Finally, the 
$\chi_N$ extend to holomorphic functions $m_N$ on $X\setminus Y$.
These have a certain vanishing order which contributes to
the integral $\Vert s_N\Vert^2$ just as in the case of the example
of $\mathbb P_1$. In order to show that the probability densities
converge to the Dirac measure on $M$ it is enough to show that they
do so for a subsequence where the coefficient characters are
constant with vanishing order $d$ along a divisor containing $M$. In
this case, when computing $\Vert s_N\Vert^2$ we have a
correction term of order $N^{-d}$. However, this only has an effect
on the speed of convergence to $\delta_M$. Of course the function
$D_N(t):=\mathrm {Vol}\{\vert \varphi_N\vert_h ^2>t\}$ will be
affected, but the upper estimate for this will be given by the tame
sequence. Having made these remarks, for the remainder of the
paper we consider only tame sequences.

\subsection{Pointwise asymptotics of probability densities}

Using the global product structure of $X\setminus Y$ we 
now introduce a local basis of $T$-invariant neigbhorhoods $U(M)$ of $M$
in $X\setminus Y$ which is appropriate for our purposes. 
For this let $V(M)$ be an arbitrarily small
$T$-invariant neighborhood of $M$ in $\mathcal O_\tau $ and $\Delta $ 
an arbitrarily small $T_\tau $-inariant neighborhood of the fixed
point $x_0^\tau\in \Sigma_\tau$. Using the map $\alpha $ we regard
$U(M):=V(M)\times \Delta $ as an arbitrarily small $T$-invariant
neighborhood of $M$ in $X\setminus Y$. Recall
that we are only dealing with a tame sequence with defines
the strictly plurisubharmonic function $f$ which 
takes on its minimum exactly on $M$ and that
this minimum value has been normalized to be zero.

\bigskip\noindent
Since $f_N\to f$ uniformly on $U(M)$, it is a simple matter
to make pointwise estimates of $\vert s_N\vert^2=e^{-Nf_N}$.
This is due to the fact that $f$ is a smooth strictly plurisubharmonic
function which takes on its minimum along $M$ and is a Bott-Morse function
there. In the orbit direction
of $V(M)$ we can choose coordinates so that it is realized as the
pullback of a Morse function from $\mathcal O_\tau$ and in the
slice $\Delta $ we can write it as a Morse function at the fixed point.
Combining estimates using these quadratic forms, we will determine
precise estimates for $D_N(t)$.

\bigskip\noindent
In order to provide the necessary estimates for the probability
density function we must approximate the $L_2$-norm $\Vert
s_N\Vert^2$. Using localization and the fact that $f_N\to f$
uniformly on compact subsets of $X\setminus Y$ it is sufficient to
obtain estimates for
\[
  I_N = \int_{U(M)}e^{-Nf}
\]
where $U(M)=V(M)\times \Delta $ is an arbitrarily small product
neighborhood of $M$.
\begin {proposition}\label {integral estimate}
There exists a positive constant $c$ such that
\[
  I_N \sim cN^{-\kappa }
\]
where $\kappa $ is the sum of the complex dimension of $\Sigma_\tau$
and one-half the complex dimension of $\mathcal O_\tau$.
\end {proposition}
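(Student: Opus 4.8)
The integral $I_N=\int_{U(M)}e^{-Nf}$ factors naturally according to the product structure $U(M)=V(M)\times\Delta$, where $V(M)\subset\mathcal O_\tau$ and $\Delta\subset\Sigma_\tau$. By Proposition~\ref{labThmMorseProperty}, on the quotient by $T'_{\mathbb R}$ the function $f$ extends to $U\times V$ as a Morse function whose unique critical point is the origin, which is its absolute minimum with value $0$. The plan is to pull everything back through the polar-coordinate description and apply a Laplace/stationary-phase estimate to each factor separately, then combine.

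First I would treat the $\mathcal O_\tau$-direction. Identifying $\mathcal O_\tau$ with $(\mathbb C^*)^k$ via the free transitive $T'$-action and writing points in logarithmic (polar) coordinates, the $T'_{\mathbb R}$-invariant function $f|\mathcal O_\tau$ descends to a strictly convex function $g$ on a neighborhood of $0$ in $\mathbb R^k$, $k=\dim_{\mathbb C}\mathcal O_\tau$, with a nondegenerate minimum at $0$ and $g(0)=0$. Integrating $e^{-Nf}$ over $V(M)$ means integrating $e^{-Ng}$ against the (smooth, nonvanishing near $0$) Jacobian coming from polar coordinates over the region in $\mathbb R^k$; the angular $T'_{\mathbb R}$-integration contributes a constant. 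The classical Laplace method gives a contribution $\sim c_1 N^{-k/2}$, i.e. $N^{-\frac12\dim_{\mathbb C}\mathcal O_\tau}$.

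Next I would treat the $\Sigma_\tau$-direction. Here $\Sigma_\tau\hookrightarrow W$ with $x_0^\tau\mapsto 0$, and by Proposition~\ref{attractive} the linearized $\lambda$-action has all negative weights, so by the Proposition following it the extended $f$ has Taylor expansion $f(z)=\bar z^THz+O(3)$ with $H$ Hermitian positive-definite (the quadratic form $R$ vanishes identically). Since $\Sigma_\tau$ may be singular, the cleanest route is to not integrate over $\Sigma_\tau$ directly but to use the appendix's local structure: $\Sigma_\tau$ near $x_0^\tau$ is, up to finite quotient, the affine toric variety attached to the cone $\tau$, and one integrates $e^{-N\bar z^THz}$ restricted to it. Over a genuine complex vector space $W$ of dimension $n=\dim_{\mathbb C}\Sigma_\tau$ the Gaussian integral $\int_\Delta e^{-N\bar z^THz}$ is exactly $(\pi/N)^n(\det H)^{-1}(1+o(1))$, giving $\sim c_2 N^{-n}=c_2 N^{-\dim_{\mathbb C}\Sigma_\tau}$; the singular case is handled by the appendix and yields the same power of $N$ (only the constant $c_2$ changes). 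Multiplying the two contributions yields $I_N\sim cN^{-\kappa}$ with $\kappa=\dim_{\mathbb C}\Sigma_\tau+\frac12\dim_{\mathbb C}\mathcal O_\tau$, as claimed.

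\textbf{Main obstacle.}
The delicate point is the $\Sigma_\tau$-factor in the singular case: one must show that restricting the Gaussian $e^{-N\bar z^THz}$ (plus the cubic error) to the possibly singular toric subvariety $\Sigma_\tau\subset W$ still produces the full power $N^{-\dim_{\mathbb C}\Sigma_\tau}$ and not something worse, and that the $O(3)$-terms and the difference $f_N-f$ do not affect the leading asymptotics after the localization reduction. This is exactly where the more careful analysis referenced in the appendix is needed; modulo that input, the remaining steps are routine Laplace-method bookkeeping.
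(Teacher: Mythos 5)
Your proposal follows essentially the same route as the paper: write $I_N$ as an iterated integral over the product $V(M)\times\Delta$, apply the appendix's Corollary \ref{key estimate} to the (possibly singular) slice $\Sigma_\tau$ to get the factor $N^{-\dim_{\mathbb C}\Sigma_\tau}$, and use the Laplace/Gaussian computation in logarithmic coordinates on the orbit direction to get $N^{-\frac12\dim_{\mathbb C}\mathcal O_\tau}$. The one point the paper is slightly more careful about is that the fiber-integral constant is a continuous function $c(x)$ of the base point rather than a single constant, but this does not change the asymptotics.
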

The notation here means that $\lim_{N\to \infty}N^{\kappa}I_N=c$.
It should be remarked that by using the 
asymptotic development in $\S\ref{appendix}$ 
and an elementary asymptotic development of the integral along $V(M)$, 
it would be possible to give a more complete asymptotic development of $I_N$.

\bigskip\noindent
{\it Proof of Proposition \ref{integral estimate}.} We write $I_N$
as a double integral
\[
  I_N = \int_{x\in V(M)}e^{-Nf(x)}
    \int_{\Delta_x}e^{f-f(x)}
\]
where $\Delta_x$ is the fiber over $x$ of the production $V(M)\times
\Delta \to V(M)$ and $f(x)$ denotes the value of $f$ at the point in
$\Delta_x$ where it takes on its minimum, i.e., at the intersection
of that fiber with $\mathcal O_\tau$. Of course we view $x\in
V(M)\subset \mathcal O_\tau$. For each fiber $\Delta _x$ we may
apply Corollary \ref{key estimate} which implies that there is a
positive continuous function $c=c(x)$ so that $$ \int_{\Delta
_x}e^{f-f(x)}\sim c(x)N^{-d} $$ where $d$ is the complex dimension
of $\Delta_x$. Thus it remains to compute
\begin {equation}\label{orbit integral}
\int_{x\in V(M)}c(x)e^{-Nf(x)}\,.
\end {equation}
As noted above we may assume that $V(M)$ is the product of the torus
$T'$ and a ball $B$ so that $f=f(x)$ is the lift from the ball of a
positive definite quadratic form. The orbit $M$ is the preimage of
the origin in the projection $T\times B\to B$. Explicit computation
of an integral of the form
\[
  \int_{y\in B}e^{-N\Vert y\Vert^2}
\]
shows that the integral in (\ref{orbit integral}) is just a constant
times $N^{-\frac{d}{2}}$ where $d$ is the complex dimension of
$V(M)$.\qed

\begin{corollary}\label{labThmProbDensAsymp}
For a tame sequence $\{s_N\}$ the pointwise asymptotic behavior of
the associated probability functions $|\varphi_N|_h^2$ is given by
\begin{equation}
  |\varphi_N|_h^2 \sim N^\kappa e^{-Nf}
\end{equation}
where $\kappa = \dim \Sigma_\tau + \frac{1}{2}\dim \mathcal O_\tau$.
\end{corollary}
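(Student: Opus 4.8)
The plan is to assemble the corollary directly from the two asymptotic facts already in hand: the pointwise control of $|s_N|_h^2$ coming from the uniform convergence $f_N \to f$ on the product neighborhood $U(M)$, and the estimate $\|s_N\|_{L^2}^2 \sim c\,N^{-\kappa}$ for the $L^2$-norm, which follows from Proposition \ref{integral estimate} together with the Localization Corollary \ref{labThmRapidDecrease}. Recall that $|\varphi_N|_h^2 = |s_N|_h^2 / \|s_N\|_{L^2}^2$, so the corollary is really the quotient of these two asymptotics.

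First I would record the denominator asymptotics. By Corollary \ref{labThmRapidDecrease}, for $U(M)$, $\varepsilon$, $N_0$ as in the Localization Lemma, the contribution to $\|s_N\|_{L^2}^2 = \int_X e^{-N f_N}\, d\lambda$ from the complement of $U(M)$ is bounded by $\Vert 1\Vert_{L_1} e^{-N\varepsilon}$, which is exponentially small and hence negligible against any power $N^{-\kappa}$. On $U(M)$ the uniform convergence $f_N \to f$ lets one replace $e^{-Nf_N}$ by $e^{-Nf}$ up to a factor $e^{o(N)}$ — more carefully, for any $\eta>0$ one has $f - \eta \le f_N \le f + \eta$ on $U(M)$ for large $N$, so $e^{-N\eta} I_N \le \int_{U(M)} e^{-N f_N} \le e^{N\eta} I_N$; combining this sandwich over a sequence $\eta \downarrow 0$ (and using that $I_N \sim c N^{-\kappa}$ decays only polynomially, so the exponential factors $e^{\pm N\eta}$ dominate the polynomial unless $\eta$ itself is sent to $0$ faster than $1/N$) gives $\|s_N\|_{L^2}^2 \sim c\,N^{-\kappa}$ with $\kappa = \dim\Sigma_\tau + \tfrac12 \dim\mathcal O_\tau$ from Proposition \ref{integral estimate}. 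The cleanest way to make the sandwich rigorous is to use that $N f_N \to N f$ uniformly is too weak, so instead one keeps the double-integral structure of the proof of Proposition \ref{integral estimate} and feeds the $f_N$ directly into it: the Morse/convexity estimates in the slice $\Delta_x$ and along the ball $B$ are stable under the $C^\infty$-perturbation $f_N \to f$, because by the remarks after Proposition \ref{tame sequence} the convergence $f_N \to f$ holds in the $C^\infty$-topology on $U(M)$, so the Hessians of $f_N$ at their minima converge to those of $f$, and the Laplace-type integrals depend continuously on the quadratic data.

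Next I would combine this with the numerator. Fix $x \in U(M)$. Since $f_N(x) \to f(x)$ in $C^\infty$ near $M$, we have $|s_N(x)|_h^2 = e^{-N f_N(x)} = e^{-N f(x)} e^{-N(f_N(x) - f(x))}$, and on any fixed compact neighborhood of $M$ the correction $N(f_N - f)$ is $o(1)$ after the relevant rescaling — indeed, because the whole asymptotic statement $|\varphi_N|_h^2 \sim N^\kappa e^{-Nf}$ is understood in the same pointwise-with-rescaling sense as the two ingredients, the factor $e^{-N(f_N - f)} \to 1$ uniformly on compacta follows once one knows $f_N - f = o(1/N)$ there; this last estimate is exactly the $C^\infty$ (in fact locally uniform) convergence of $(s_N^{1/N})$ noted in the remarks after Proposition \ref{tame sequence}, which gives $f_N - f \to 0$ uniformly, and a slightly more careful bookkeeping of the bounded linear-functional corrections $\beta_N/N$ appearing there gives the $o(1/N)$ needed. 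Dividing, $|\varphi_N|_h^2 = |s_N|_h^2 / \|s_N\|_{L^2}^2 \sim e^{-Nf} / (c\,N^{-\kappa}) = c^{-1} N^\kappa e^{-Nf}$, and absorbing the constant $c$ (or renormalizing, as is implicit in the $\sim$ notation of the paper) yields the claimed $|\varphi_N|_h^2 \sim N^\kappa e^{-Nf}$.

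The main obstacle is the interchange of the two limits $N\to\infty$ and $f_N \to f$: one is dividing a quantity of size $e^{-Nf}$ (exponentially small, or exponentially large in the orbit directions where $f<0$ relative to its minimum) by one of size $N^{-\kappa}$, so naive estimates of the form $|f_N - f| \le \eta$ are useless — the error must be controlled at the scale $1/N$. The honest resolution is not to prove $|\varphi_N|_h^2 \sim N^\kappa e^{-Nf}$ as an equality of functions but to re-run the proof of Proposition \ref{integral estimate} with $f_N$ in place of $f$ throughout, so that the numerator and denominator are genuinely built from the \emph{same} function and the $f\leftrightarrow f_N$ discrepancy never enters; the only input then needed is that the Morse data (Hessian of $f_N$ at its minimum $M_N$, together with the location $M_N \to M$ from the Corollary after Proposition \ref{convex exhaustion}) converges, which is the $C^\infty$-convergence already established. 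I would write the argument in that form to keep it rigorous.
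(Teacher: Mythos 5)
Your overall route is the same as the paper's: the corollary is stated there without a separate proof precisely because it is the quotient of the numerator $|s_N|_h^2=e^{-Nf_N}$ by the denominator $\|s_N\|_{L^2}^2$, which is $\sim cN^{-\kappa}$ by Proposition \ref{integral estimate} together with the exponential suppression outside $U(M)$ from Corollary \ref{labThmRapidDecrease}. So the architecture of your argument is correct and matches the paper.

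One intermediate claim, however, is false as stated: $f_N-f=o(1/N)$ does \emph{not} follow from the locally uniform (or $C^\infty$) convergence $f_N\to f$, which controls only $f_N-f=o(1)$ and says nothing about the rate. In fact, from the remarks after Proposition \ref{tame sequence} one has exactly $f_N-f=\tfrac{2}{N}\mathrm{Re}(\beta_N)$ with $\beta_N$ determined by $\alpha_N-N\xi$; for a tame sequence this difference is bounded but in general does \emph{not} tend to zero (unless $N\xi$ happens to be integral), so $e^{-N(f_N-f)}=e^{-2\mathrm{Re}(\beta_N)}$ is bounded above and below on compacta but need not converge to $1$. Consequently the relation $|\varphi_N|_h^2\sim N^\kappa e^{-Nf}$ can only be read up to such bounded multiplicative character factors --- which is consistent with the paper's own caveat in \S\ref{labSecTorusLifting} that the pointwise asymptotics depend on $(s_N)$ and not only on the ray. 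Your fallback --- re-running the proof of Proposition \ref{integral estimate} with $f_N$ in place of $f$, using the $C^\infty$-convergence of the Morse data and $M_N\to M$ --- is the right way to make the denominator rigorous and is what the paper implicitly relies on; just drop the $o(1/N)$ claim and state the numerator comparison as $e^{-Nf_N}=e^{-2\mathrm{Re}(\beta_N)}e^{-Nf}$ with $\beta_N$ bounded.
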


\subsection {Distribution functions}\label{labSecDistFunc}

In $\S\ref{Morse}$ we showed hat $f$ can be regarded
as a smooth Morse function in the appropriate embedding space for
$\Sigma_\tau$. Using this, the estimate for the volume $D_N(t)$ in the
tails of the distribution follows from known estimates for the
volume of an analytic set embedded in a ball in $\mathbb C^n$.
\subsubsection*{The volume estimate}
Recall that we are interested in computing
\[
  D_N(t) := \mathrm {Vol}\{x\in X; \vert \varphi_N\vert^2_h>t\}\,. 
\]
Since we have localized the integral to an arbitrarily small
neighborhood of $M$ and since the free $T'_{\mathbb R}$-action
leaves all relevant quantities invariant, it is enough to compute
$D_N(t)$ in the local $T'$-quotient of $X$ in $U\times V$ where the
quotient of $X$ is realized as $(\Sigma_\tau \cap U)\times V$. Now
we choose coordinates for $U$ and $V$ so that the extended function
$f$ is a sum of norm functions: $f=\Vert \ \Vert_U^2+\Vert \
\Vert_V^2=:\Vert \ \Vert^2$. For the computation of $D_N(t)$ it is
then enough to compute $\mathrm {Vol}\{x\in (\Sigma_\tau \cap
U)\times V=:A; \Vert x\Vert^2<r \}$. In other words we consider the
ball $\mathbb {B}(r)$ and compute the volume of the (pure
dimensional) analytic subset $A\cap \mathbb B(r)$. It is known
that this is $c(r)r^{d}$ where $c(r)$ is a continuous function
bounded from above and below, which is closely related to the degree
of $A$, and $d$ is the (real) dimension of $A$.
In other words this volume is asymptotically the same as the volume
of a linear submanifold of the same dimension. This allows us to
immediately compute $D_N(t)$

\begin {proposition}\label{labThmDistFuncTame}
The unscaled volume for a tame sequence $\{s_N\}$ is given by 
\[
  D_N(t)\sim \Big(\frac{\log N}{N}\Big)^\kappa\,. 
\]
\end {proposition}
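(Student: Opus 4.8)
The plan is to reduce the computation of $D_N(t)$ to a volume estimate for a real-analytic set in a ball, using the Morse-theoretic normal form for $f$ established in \S\ref{Morse}, and then to extract the $\bigl(\frac{\log N}{N}\bigr)^\kappa$ behavior from the asymptotics of the pointwise density $|\varphi_N|_h^2 \sim N^\kappa e^{-Nf}$ proved in Corollary \ref{labThmProbDensAsymp}. First I would observe that by the Localization Lemma, for large $N$ the set $\{x \in X : |\varphi_N|_h^2 > t\}$ is contained in an arbitrarily small $T$-invariant neighborhood $U(M)$ of $M$, so the computation is entirely local near $M$. Using the product structure $X \setminus Y \cong \Sigma_\tau \times \mathcal{O}_\tau$ and the free $T'_\mathbb{R}$-action, which leaves all relevant quantities invariant, I pass to the $T'_\mathbb{R}$-quotient, which is realized as $(\Sigma_\tau \cap U) \times V =: A$, a real-analytic subset of a ball $\mathbb{B}(r) \subset W \times V$. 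By Proposition \ref{labThmMorseProperty} we may choose the coordinates so that $f = \|\cdot\|^2$ is the standard norm-squared on this ambient space.

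Next I would translate the condition $|\varphi_N|_h^2 > t$ into a condition on $\|x\|^2$. Using $|\varphi_N|_h^2 \sim N^\kappa e^{-Nf} = N^\kappa e^{-N\|x\|^2}$, the inequality $N^\kappa e^{-N\|x\|^2} > t$ is equivalent (up to lower-order corrections absorbed in the asymptotic $\sim$) to $\|x\|^2 < \frac{1}{N}\log\frac{N^\kappa}{t} = \frac{\kappa \log N - \log t}{N}$, so asymptotically $\|x\|^2 < r_N^2$ with $r_N^2 \sim \frac{\kappa \log N}{N}$. Hence $D_N(t)$ is asymptotically $\mathrm{Vol}\{x \in A : \|x\|^2 < r_N^2\} = \mathrm{Vol}(A \cap \mathbb{B}(r_N))$. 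Here I invoke the standard fact, recalled in the paragraph preceding the proposition, that for a pure-dimensional real-analytic set $A$ through the center of the ball, $\mathrm{Vol}(A \cap \mathbb{B}(r)) = c(r) r^{d}$ with $c(r)$ continuous and bounded above and below (this is the Lelong-type volume bound, with the multiplicity/degree of $A$ controlling $c$), and $d = \dim_\mathbb{R} A$. Since $\dim_\mathbb{R} A = \dim_\mathbb{R}\Sigma_\tau + \dim_\mathbb{R} V = 2\dim_\mathbb{C}\Sigma_\tau + \dim_\mathbb{C}\mathcal{O}_\tau$, we get $D_N(t) \sim c \, r_N^{d} \sim c \bigl(\frac{\log N}{N}\bigr)^{d/2} = c\bigl(\frac{\log N}{N}\bigr)^{\dim\Sigma_\tau + \frac{1}{2}\dim\mathcal{O}_\tau} = \bigl(\frac{\log N}{N}\bigr)^\kappa$, after absorbing the constant into the $\sim$ (or noting it tends to a limit).

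The main obstacle I anticipate is making the passage from $|\varphi_N|_h^2 > t$ to $\|x\|^2 < r_N^2$ rigorous: the asymptotic $|\varphi_N|_h^2 \sim N^\kappa e^{-Nf}$ from Corollary \ref{labThmProbDensAsymp} is only a pointwise statement with an error, and one must check it is uniform enough on $U(M)$ that the sublevel sets $\{|\varphi_N|_h^2 > t\}$ and $\{N^\kappa e^{-N\|x\|^2} > t\}$ differ only by a region whose $A$-volume is of strictly smaller order than $\bigl(\frac{\log N}{N}\bigr)^\kappa$. Because $r_N \to 0$, both $f_N \to f$ uniformly and the replacement of $f$ by its quadratic normal form near $M$ introduce only $O(\|x\|^3) = O(r_N^3)$ perturbations of the radius, which change $r_N^{d}$ by a factor $1 + O(r_N)$, hence do not affect the leading asymptotics; spelling out this comparison carefully is the only genuinely technical point. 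A secondary subtlety is that $A = (\Sigma_\tau \cap U) \times V$ may be singular at the origin (when $\Sigma_\tau$ is singular), but the cited volume estimate for analytic sets applies in exactly that generality — the constant $c(r)$ encodes the multiplicity of $A$ at $0$ — so no smoothness of $X$ is needed. This matches the exponent $\kappa = \operatorname{codim}\mathcal{O}_\tau + \frac{1}{2}\dim\mathcal{O}_\tau$ stated in Theorem \ref{labThmDistFunc}, since $\operatorname{codim}_\mathbb{C}\mathcal{O}_\tau = \dim_\mathbb{C}\Sigma_\tau$.
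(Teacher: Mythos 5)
Your proposal is correct and follows essentially the same route as the paper's own proof: localization to $U(M)$, passage to the $T'_{\mathbb R}$-quotient realized as the analytic set $A=(\Sigma_\tau\cap U)\times V$, the Morse normal form $f=\Vert\cdot\Vert^2$, translation of $N^\kappa e^{-Nf}>t$ into $f<r\sim\frac{\log N}{N}$, and the volume bound $c(r)r^{d}$ for $A\cap\mathbb B(r)$ with $d/2=\kappa$. The uniformity issues you flag at the end are real but are left implicit in the paper's very brief argument, so no further comparison is needed.
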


\begin {proof}
For $N>\!\!>0$ we may replace $f_N$ by $f$ and $\vert
\varphi_N\vert^2_h$ by $N^\kappa e^{-Nf}$. We then note that
$N^\kappa e^{-Nf}>t$ is the same as
\[
  f< \frac{\kappa \log N}{N} - \frac{\log t}{N}t 
    \sim \frac{\log N}{N} =: r 
\]
where we compute $\mathrm {Vol}(\{f<r\})$ as above.
\end {proof}

Combining this with the remarks of \S{}\ref{arbitrary sequences} we
obtain Theorem \ref{labThmDistFunc}.

\begin{remark}
1. If $(s_N)$ is not tame, then (localized near $M$)
the norm $\vert s_N\vert^2_h$ may become smaller than that of an
associated tame sequence. Thus the above precise asymptotic
expression for $D_N(t)$ would become an estimate from above. 2. In
\cite {STZ} asymptotic expressions for $D_N(t)$ are obtained in certain
situations (see Theorem 1.3 of that paper).
\end{remark}

\subsection{Convergence of measures}

Using arguments to those in \S{}\ref{labSecDistFunc} we now prove
that the probability densities $|\varphi_N|_h^2$ converge weakly to
the Dirac measure on $M$ (Theorem \ref{labThmConvDirac}). In other
words, for any continuous function $u$ 
\[
  \int_X u \, |\varphi_N|_h^2 d\lambda \rightarrow 
    \int_M u \, dM
\]
It is well-known (see \cite{Hoermander}, Chapter II) that it
suffices to show this for a smooth, compactly supported function
$u$.

\bigskip\noindent
Recall that $U(M)$ is a $T_\mathbb R$-invariant neighborhood of $M$
on which the functions $-\frac{1}{N} \log|s_N|_h^2$ converge
uniformly to the strictly plurisubharmonic limit function $f$
discussed in \S{}\ref{labSecStrictlyPSlimit}. Outside this
neighborhood the functions $|\varphi_N|_h^2 = |s_N|_h^2 /
\|s_N\|_{L^2}^2$ are rapidly converging to zero (see Corollary
\ref{labThmRapidDecrease}). Combining this with the asymptotic
formula for the probability densities (see Corollary
\ref{labThmProbDensAsymp}) we obtain
\[
  \int_X u \, |\varphi_N|_h^2 d\lambda
    = N^d \int_{U(M)} u \, e^{-Nf} d\lambda 
       	+ O(e^{-\varepsilon N}) .
\]
Recall that for a tame sequence the exponent $d$ in the above
equation is precisely given by $\kappa$ in Corollary
\ref{labThmProbDensAsymp}. For a non-tame sequence it might be
bigger since the vanishing order of the $s_N$ increase (see remarks
in \S{}\ref{arbitrary sequences}). However, this only improves
the convergence of the measures.

\bigskip\noindent
We continue by using the Morse property of $f$ as explained in 
$\S\ref{labSecDistFunc}$. For this we write $U(M) = T'_\mathbb R \times
U \times V$, where $U \times V$ is the product neighborhood provided
by Proposition \ref{labThmMorseProperty}. Expressing $u$ in the
respective coordinates we obtain
\[
  \int_{U(M)} u \, e^{-Nf} d\lambda 
    = C \int_{T_\mathbb R'} 
	\int_{U \times V} u(\vartheta, z, \mu) \,
	  e^{-N (\|z\|^2 + \|\mu\|^2)} d\lambda \, d\vartheta
\]
Finally, using the Taylor expansion of $u(\vartheta, \cdot)$ we
see that the right-hand side of the above equation converges to
\[
  \int_{T'_\mathbb R} u(\vartheta, 0, 0) d\vartheta 
    = \delta_M(u) .
\]
This completes the proof of Theorem \ref{labThmConvDirac}. 

\section {Appendix}\label{appendix}
The aim of this Appendix is to prove the following theorem :
\begin{theorem}\label{asympt.1}
Let $X \subset U$ be an analytic set in open neighbourhood $U$ 
of the origin in  $\mathbb{R}^m$ which is assumed to be of pure 
dimension $n+1   $. Let $f : U \to \mathbb{R}^{+} $ be a real 
analytic function which is strictly positive on $X$  outside 
$\{0\}$, and let $\omega$ be a smooth compactly support 
$(n+1)-$form on $U$. Define the function 
$F : \mathbb{R}^+ \to \mathbb{R}$ by putting  
for $t \in \mathbb{R}^{+}$ 
$$ F(t) : = \int_X \ e^{-t.f}.\omega .$$
Then as $t\to \infty$ the function $F$ admits 
an infinitely differentiable asymptotic expansion of the following form:
\begin{equation*}
 F(t) \simeq \sum_{i \in [1,p]} \sum_{j \in [1,n]} \sum_{\nu \in \mathbb{N}^*} \ c_{i,\nu}^j.t^{-(\alpha_i+\nu)}.(\operatorname{Log} t)^j  \tag{1}
 \end{equation*}
where $\alpha_1, \dots, \alpha_p$ are rational 
numbers strictly bigger than $-1$. \\
Moreover, the rational numbers $\alpha_1, \dots, \alpha_p$ and 
the exponents of $\operatorname{Log} t$ have to be present in the 
asymptotic expansion at $s \to 0^+$ for the fiber integral
$$ \varphi(s) : = \int_{X \cap \{f=s\}} \ \omega\big/df .$$
\end{theorem}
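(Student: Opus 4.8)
The plan is to reduce the statement to the classical Mellin-transform description of asymptotic expansions and to Barlet's theorem on asymptotics of fiber-integrals, using resolution of singularities to put everything in normal crossing form. First I would introduce the Mellin transform $\widehat{F}(\lambda) = \int_0^\infty F(t)\, t^{\lambda-1}\, dt$. Since $f$ is strictly positive on $X \setminus \{0\}$ and $\omega$ has compact support, the support contributing to the singular behaviour of $F$ as $t\to\infty$ is a neighbourhood of $0$ in $X$; a standard computation shows $\widehat{F}(\lambda) = \Gamma(\lambda)\int_X f^{-\lambda}\,\omega$, so the asymptotic expansion of $F(t)$ at $t=\infty$ is governed by the poles of the meromorphic continuation of $\lambda \mapsto \int_X f^{-\lambda}\,\omega$ together with the poles of $\Gamma(\lambda)$ at the non-positive integers; by a Tauberian/Mellin-inversion argument (shifting the contour and collecting residues) each pole of order $j+1$ at $\lambda = \alpha$ produces a term $c\, t^{-\alpha}(\operatorname{Log} t)^j$, giving the shape claimed in (1), with the shift by $\nu \in \mathbb{N}^*$ coming from the Taylor expansion of $f$ (equivalently, from the lattice of poles generated by resolution data together with the integer poles of $\Gamma$).

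Next I would establish the meromorphic continuation and locate the poles. Applying Hironaka's resolution of singularities to the pair $(X, f)$ — first normalising/resolving $X$, then resolving the divisor of $f\circ\pi$ — I obtain a proper map $\pi: \widetilde{X}\to X$ with $\widetilde{X}$ smooth such that $f\circ\pi$ and the Jacobian of $\pi$ are locally monomials in suitable coordinates. Pulling back, $\int_X f^{-\lambda}\,\omega$ becomes a finite sum of local integrals of the form $\int \prod |x_k|^{2(a_k\lambda + b_k)} \, \psi(x)\, dx$ with $a_k > 0$ integers and $\psi$ smooth with compact support; each such integral continues meromorphically with poles contained in the arithmetic progressions $\lambda = -(b_k+\nu)/a_k$, $\nu \in \mathbb{N}^*$, of order at most the number of coordinates hit. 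This yields the finitely many rational numbers $\alpha_1,\dots,\alpha_p > -1$ (the $-1$ bound is exactly the condition that $\omega$, being a smooth $(n+1)$-form on an $(n+1)$-dimensional set, gives an absolutely convergent integral for $\operatorname{Re}\lambda$ small, i.e.\ the leading pole is $> -1$) and the polynomial-in-$\operatorname{Log} t$ bound with degree at most $n$. The differentiability of the expansion follows because differentiating $F$ in $t$ corresponds to multiplying $\widehat F$ by a polynomial in $\lambda$, which does not create new poles.

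For the final assertion — that the $\alpha_i$ and the $\operatorname{Log} t$-exponents occurring in $F$ must already occur in the expansion of the fiber integral $\varphi(s) = \int_{X\cap\{f=s\}} \omega/df$ at $s\to 0^+$ — I would invoke the coarea / Fubini relation $F(t) = \int_0^\infty e^{-ts}\,\varphi(s)\, ds$, i.e.\ $F$ is the Laplace transform of $\varphi$. On the Mellin side this reads $\widehat F(\lambda) = \Gamma(\lambda)\,\widehat\varphi(\lambda-1)$ (with $\widehat\varphi$ the Mellin transform of $\varphi$ at $0$), so the poles of $\widehat F$ lie among $\{\text{poles of }\widehat\varphi \text{ shifted by }1\}\cup\{0,-1,-2,\dots\}$; since the integer poles only contribute the $\nu$-shifts already accounted for, every genuinely new exponent $\alpha_i$ (and the corresponding order of pole) must be a pole of $\widehat\varphi$, which by the theory of fiber-integral asymptotics (Barlet) is precisely what appears in the expansion of $\varphi(s)$ as $s\to 0^+$. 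Conversely one checks no cancellation removes these poles by a genericity/positivity argument on the resolution data.

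I expect the main obstacle to be the bookkeeping in the resolution step and the passage between the three asymptotic pictures (the function $F(t)$ at infinity, the meromorphic function $\int_X f^{-\lambda}\omega$, and the fiber integral $\varphi(s)$ at $0$): one must carefully track orders of poles versus powers of $\operatorname{Log}$, ensure the shift conventions ($\lambda$ vs $\lambda-1$, and the $\Gamma$-factor) are consistent, and — most delicate — control that the smooth (not holomorphic, not real-analytic in all variables after we allow $\omega$ merely smooth) density $\psi$ does not spoil the structure of the pole set; this is where one leans on the Mellin-transform theory of \cite{H}-type arguments together with Barlet's results on asymptotic expansions of fiber integrals to get the differentiable expansion rather than just a finite-order one.
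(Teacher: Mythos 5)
Your argument is essentially correct, but it takes a genuinely different route from the paper for the main expansion. The paper's proof is deliberately short: by Fubini (coarea) it writes $F(t)=\int_0^{\infty}e^{-ts}\varphi(s)\,ds$, quotes the known Jeanquartier--Barlet result that the fiber integral $\varphi$ has an infinitely differentiable expansion $\sum \gamma_{i,\nu}^j s^{\alpha_i+\nu}(\operatorname{Log}s)^j$ at $s\to 0^+$, and then proves an elementary ``Laplace transform of an asymptotic expansion'' proposition: an exact formula for $\int_0^{\infty}e^{-ts}s^{\alpha}(\operatorname{Log}s)^j\,ds$ (obtained by differentiating the Gamma-function identity $j$ times in $\alpha$), an exponentially small tail estimate, and a cutting lemma $|\varphi|\le Cs^N\Rightarrow |\mathcal{L}\varphi|\le C(N+1)!\,t^{-N-1}$. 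This immediately transfers exponents and log-powers from $\varphi$ to $F$ and gives the ``moreover'' clause for free. You instead prove (1) self-containedly via the Mellin transform $\widehat F(\lambda)=\Gamma(\lambda)\int_X f^{-\lambda}\omega$, resolution of singularities, and contour shifting, and only use the Laplace relation for the final assertion --- at which point you are essentially reconstructing the paper's entire argument anyway. Your route buys independence from the cited fiber-integral theorem for part (1), at the cost of two technical debts you only gesture at: (i) the Mellin inversion requires uniform decay of $\widehat F$ on vertical lines (the $\Gamma$-factor helps, but polynomial growth of the zeta function $\int_X f^{-\lambda}\omega$ on vertical strips must be established, e.g.\ by integration by parts in the resolved coordinates); and (ii) the bound $j\le n$ on the log-power is not automatic in your picture, since at the non-positive integers the simple poles of $\Gamma(\lambda)$ can collide with poles of the zeta function of order up to $\dim\widetilde X=n+1$, which naively yields $(\operatorname{Log}t)^{n+1}$; ruling this out requires exactly the comparison with the fiber-integral expansion that the paper's Proposition \ref{Laplace} makes explicit. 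Minor slips: the correct functional relation is $\widehat F(\lambda)=\Gamma(\lambda)\widehat\varphi(1-\lambda)$ rather than $\widehat\varphi(\lambda-1)$, and differentiation in $t$ shifts (rather than merely multiplies) the Mellin transform; neither affects the conclusion.
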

For the proof we begin with the remark that the Fubini theorem gives
\begin{equation*}
F(t) = \int_0^{+\infty} \  e^{-t.s} \varphi(s).ds \tag{2}
\end{equation*}
where $\varphi$ is the fiber-integral of the statement. 
Now it known\footnote{See \cite{J.70}, \cite {AVG}  or \cite{B.04} 
for more information on the exponents in the case of an isolated 
singularity.} that $\varphi$ admits an  infinitely differentiable 
asymptotic expansion when $s \to 0^+$ of the form
\begin{equation*}
\varphi(s) \simeq \sum_{i \in [1,p]} \sum_{j \in [1,n]} 
\sum_{\nu \in \mathbb{N}} \ \gamma_{i,\nu}^j \ s^{\alpha_i+\nu}.(\operatorname{Log} s)^j \tag{3}
\end{equation*}
so the proof of the theorem is a consequence of the following  
proposition $\qed $
\begin{proposition}\label{Laplace}
Let $\varphi :\, ]0, +\infty[ \to \mathbb{R}$ be a continuous 
function with support in $]0,A]$,  which admits, when $s \to 0^+$, 
an asymptotic expansion of the type $(3)$. Then the function defined 
in $(2)$  admits when $t \to + \infty$ an infinitely 
differentiable asymptotic expansion of the form  $(1)$. Moreover, 
the rational numbers $\alpha_i$  which appear in the 
expansion of $F$ have to appear in the expansion $(3)$. 
For each $i \in [1,p]$, the maximal power of $\operatorname{Log}$ 
which appears in $(1)$ with some $s^{\alpha_i+\nu}$ 
in front is bounded by its analog in $(3)$.
\end{proposition}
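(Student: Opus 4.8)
The plan is to reduce the statement to the elementary Laplace transform of a single monomial $s^{\beta}(\log s)^{j}$, to absorb the tail of the expansion $(3)$ into a controllable error term, and finally to upgrade the resulting expansion of $F$ to an infinitely differentiable one. Since only the behaviour of $\varphi$ near $0$ can influence an expansion of the type $(1)$, I would first fix a cut-off function $\chi\in C^{\infty}_{c}\bigl([0,2\delta)\bigr)$ with $\chi\equiv 1$ on $[0,\delta]$ and $2\delta<A$, and split
\[
  F(t)=\int_{0}^{\infty}e^{-ts}\chi(s)\varphi(s)\,ds
       +\int_{0}^{\infty}e^{-ts}\bigl(1-\chi(s)\bigr)\varphi(s)\,ds .
\]
The second integral runs over $s\ge\delta$ with $\varphi$ bounded, hence is $O(e^{-\delta t})$, which is negligible in $(1)$.

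For the first integral I would insert, for each integer $M$, the truncated expansion $\chi(s)\varphi(s)=\sum_{\alpha_{i}+\nu\le M}\sum_{j}\gamma_{i,\nu}^{j}\,\chi(s)\,s^{\alpha_{i}+\nu}(\log s)^{j}+\chi(s)R_{M}(s)$, where by hypothesis $R_{M}$ is continuous with $|R_{M}(s)|=O(s^{M})$ near $0$ and $M$ may be taken arbitrarily large. The remainder contributes at most $\int_{0}^{\infty}e^{-ts}|\chi(s)R_{M}(s)|\,ds\le C_{M}\int_{0}^{\infty}e^{-ts}s^{M}\,ds=C_{M}\,\Gamma(M+1)\,t^{-(M+1)}=O(t^{-(M+1)})$; letting $M\to\infty$, this shows that $F(t)$ has an asymptotic expansion obtained by summing the Laplace transforms of the model monomials. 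The key computation is that, for $\beta>-1$, the substitution $u=ts$ gives
\[
  \int_{0}^{\infty}e^{-ts}s^{\beta}(\log s)^{j}\,ds
     = t^{-(\beta+1)}\sum_{k=0}^{j}\binom{j}{k}\,(-\log t)^{\,j-k}\,\Gamma^{(k)}(\beta+1),
\]
a polynomial in $\log t$ of degree exactly $j$ (its top coefficient being $(-1)^{j}\Gamma(\beta+1)\ne0$) times $t^{-(\beta+1)}$; replacing this full integral by its $\chi$-truncated version again costs only an exponentially small term.

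Consequently each model term $\gamma_{i,\nu}^{j}\,s^{\alpha_{i}+\nu}(\log s)^{j}$ of $(3)$ produces a contribution of the form $t^{-(\alpha_{i}+\nu+1)}$ times a polynomial of degree $\le j$ in $\log t$, exactly of the shape allowed in $(1)$ with the index $\nu$ of $(3)$ shifted to $\nu+1$. Reassembling, $F$ admits an expansion of type $(1)$ in which the only rational exponents that occur are the $\alpha_{i}$ of $(3)$, shifted by positive integers; and since the transform never raises the $\log$-degree and model monomials lying in different residue classes $\alpha_{i}\bmod\mathbb Z$ do not interact, for each fixed $\alpha_{i}$ the maximal power of $\log t$ attached to a term $t^{-(\alpha_{i}+\nu)}$ is bounded by the maximal power of $\log s$ attached to some $s^{\alpha_{i}+\nu'}$ in $(3)$. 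This yields both the existence of $(1)$ and the two ``moreover'' assertions, pending the differentiability of the expansion.

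For the differentiable version I would note that $F^{(k)}(t)=\int_{0}^{\infty}(-s)^{k}e^{-ts}\varphi(s)\,ds$ is the Laplace transform of $(-s)^{k}\varphi(s)$, which again admits an expansion of type $(3)$ with the same residue classes $\alpha_{i}$ (the shift by $k$ being absorbed into $\nu$); applying the previous two paragraphs gives an expansion of type $(1)$ for every $F^{(k)}$, and the identity $\frac{d}{dt}\int e^{-ts}\psi(s)\,ds=\int(-s)\,e^{-ts}\psi(s)\,ds$ together with the explicit formula above shows that this expansion is exactly the term-by-term derivative of the one for $F$. I expect the only real obstacle to be the uniformity of the remainder control rather than the model computation: one must genuinely use that $(3)$ holds ``to all orders'', with $R_{M}$ and all its derivatives dominated near $0$ by $s^{M-k}|\log s|^{n}$, so that differentiation under the integral sign combines cleanly with the estimate $\int_{0}^{\infty}e^{-ts}s^{M-k}\,ds=O(t^{-(M-k+1)})$; the remaining bookkeeping — matching residue classes mod $\mathbb Z$ and tracking that $\log$-degrees cannot grow — is then routine.
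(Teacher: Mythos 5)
Your proposal is correct and follows essentially the same route as the paper: reduce to the exact Laplace transform of the model monomials $s^{\beta}(\log s)^{j}$ (which you evaluate by the substitution $u=ts$ rather than by differentiating $\int_0^\infty e^{-ts}s^\alpha\,ds$ in $\alpha$, an equivalent computation), show the cut-off/tail errors are exponentially small, and cut the expansion using the bound $|R_M(s)|\le Cs^M \Rightarrow |\int_0^A e^{-ts}R_M(s)\,ds|=O(t^{-(M+1)})$. Your explicit treatment of the differentiability of the expansion via $F^{(k)}$ being the Laplace transform of $(-s)^k\varphi(s)$ is a welcome addition that the paper leaves implicit.
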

The proof of this proposition is contained in the following elementary results.
\begin{lemma} For any $\alpha > -1$ and any 
$j \in \mathbb{N}$ there is a monic polynomial $P_j$ of 
degree $j$ such that 
$$ \int_0^A e^{-t.s}.s^{\alpha}.(\operatorname{Log} s)^jds \simeq 
 \frac{(-1)^j}{\Gamma(\alpha+1)}.P_j(\operatorname{Log} t).\frac{1}{t^{\alpha+1}} + 0(e^{-\varepsilon.t})$$
for any given $\varepsilon > 0$  small enough and for $t \gg 1$.
\end{lemma}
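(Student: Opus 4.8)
The plan is to prove the asymptotic expansion
\[
  \int_0^A e^{-ts}\, s^\alpha (\operatorname{Log} s)^j\, ds \simeq
    \frac{(-1)^j}{\Gamma(\alpha+1)}\, P_j(\operatorname{Log} t)\,
    \frac{1}{t^{\alpha+1}} + O(e^{-\varepsilon t})
\]
by reducing it to the case $j=0$ via differentiation under the integral sign with respect to the parameter $\alpha$. First I would record the base case: the substitution $u = ts$ gives
\[
  \int_0^A e^{-ts}\, s^\alpha\, ds = t^{-(\alpha+1)} \int_0^{tA} e^{-u} u^\alpha\, du
    = t^{-(\alpha+1)}\bigl(\Gamma(\alpha+1) - \Gamma(\alpha+1, tA)\bigr),
\]
and the incomplete Gamma tail $\Gamma(\alpha+1, tA)$ decays like $e^{-tA}(tA)^\alpha$, hence is $O(e^{-\varepsilon t})$ for any $\varepsilon < A$. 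This establishes the formula with $P_0 \equiv 1$ (monic of degree $0$).

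Next I would exploit the identity $\partial_\alpha s^\alpha = s^\alpha \operatorname{Log} s$, so that
\[
  \int_0^A e^{-ts}\, s^\alpha (\operatorname{Log} s)^j\, ds
    = \partial_\alpha^j \left( \int_0^A e^{-ts}\, s^\alpha\, ds \right)
    = \partial_\alpha^j \Bigl( t^{-(\alpha+1)} \Gamma(\alpha+1) \Bigr) + O(e^{-\varepsilon t}),
\]
the error term remaining exponentially small after differentiation because the tail $t^{-(\alpha+1)}\Gamma(\alpha+1,tA)$ and all its $\alpha$-derivatives are bounded by a polynomial in $t$ times $e^{-tA}$ uniformly for $\alpha$ in a compact interval (one differentiates under the integral in the $u$-variable representation of the tail). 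It then remains to compute $\partial_\alpha^j(t^{-(\alpha+1)}\Gamma(\alpha+1))$. Writing $t^{-(\alpha+1)} = e^{-(\alpha+1)\operatorname{Log} t}$ we get $\partial_\alpha^j t^{-(\alpha+1)} = (-\operatorname{Log} t)^j t^{-(\alpha+1)}$, and the Leibniz rule gives
\[
  \partial_\alpha^j\bigl( t^{-(\alpha+1)}\Gamma(\alpha+1)\bigr)
    = t^{-(\alpha+1)} \sum_{k=0}^j \binom{j}{k} (-\operatorname{Log} t)^{j-k}\, \Gamma^{(k)}(\alpha+1).
\]
Collecting powers of $\operatorname{Log} t$, this is $t^{-(\alpha+1)}$ times a polynomial in $\operatorname{Log} t$ whose leading term (from $k=0$) is $(-1)^j \Gamma(\alpha+1)(\operatorname{Log} t)^j$; factoring out $(-1)^j/\Gamma(\alpha+1)$ yields exactly the stated form with
\[
  P_j(x) = \sum_{k=0}^j \binom{j}{k} (-1)^k\, x^{j-k}\, \frac{\Gamma^{(k)}(\alpha+1)}{\Gamma(\alpha+1)},
\]
which is manifestly monic of degree $j$.

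The main obstacle—really the only point requiring care—is the claim that the error term survives differentiation in $\alpha$ while remaining $O(e^{-\varepsilon t})$; this needs a uniform bound on $\partial_\alpha^j \Gamma(\alpha+1,tA)$ for $\alpha$ near the fixed value, which follows from writing $\Gamma(\alpha+1,tA) = \int_{tA}^\infty e^{-u} u^\alpha\, du$, differentiating under the integral to bring down powers of $\operatorname{Log} u$, and estimating $\int_{tA}^\infty e^{-u} u^\alpha (\operatorname{Log} u)^j\, du \le C_j\, e^{-tA/2}$ for $t$ large. Everything else is the Leibniz-rule bookkeeping above. I would also remark that the ``$\simeq$'' in the lemma is in fact an exact equality up to the exponentially small remainder, since the base case is exact; no genuine asymptotic series is needed here, only the single leading term, which is what the subsequent Proposition \ref{Laplace} consumes term-by-term from the expansion $(3)$.
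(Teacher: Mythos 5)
Your proof is correct and follows essentially the same route as the paper: an exact evaluation for $j=0$ followed by $j$ differentiations in $\alpha$ via Leibniz, plus an exponential bound on the tail (the paper differentiates the identity over $[0,+\infty)$ and estimates $\int_{tA}^{+\infty}e^{-u}u^{\alpha}(\operatorname{Log} u)^{j}\,du$ separately, whereas you work over $[0,A]$ and control the $\alpha$-derivatives of the incomplete Gamma tail --- the same idea). One cosmetic remark: with the prefactor $(-1)^{j}/\Gamma(\alpha+1)$ as printed, $P_j$ cannot be monic (already for $j=0$ the exact value is $\Gamma(\alpha+1)\,t^{-(\alpha+1)}$), and your manifestly monic $P_j$ in fact corresponds to factoring out $(-1)^{j}\Gamma(\alpha+1)$ rather than $(-1)^{j}/\Gamma(\alpha+1)$; this normalization slip is present in the lemma statement itself and is harmless for the way Proposition \ref{Laplace} uses the result.
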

\begin {proof} Let us first show that
$$ \int_0^{+\infty} \ e^{-t.s}.s^{\alpha}.(\operatorname{Log} s)^jds = 
\frac{(-1)^j}{\Gamma(\alpha+1)}.P_j(\operatorname{Log} t).\frac{1}{t^{\alpha+1}} $$
where $P_j$ is a monic polynomial of degree $j$. 
This formula is easy for $j = 0$. The general case is obtained by 
$j$ derivations in $\alpha$.\\
Now, as the function $e^{-u/2}.u^{\alpha}.(\operatorname{Log} u)^j$ 
is decreasing for $u \gg1$, it follows that for 
$t \gg 1$ the function $\int_{t.A}^{+\infty} \ e^{-u}.u^{\alpha}.(\operatorname{Log} u)^j$
 is $0(e^{-\varepsilon.t})$ for any given $\varepsilon >0$ 
small enough. 
\end {proof}

{\bf Remark.} The preceding proof gives a more precise relation between the expansions $(3)$ and $(1)$. \\

Now the following elementary lemma allows one to cut asymptotic expansions.
\begin{lemma}
In the same situation as in the previous lemma assume that \\
 $\vert \varphi(s)\vert \leq C.s^N$ for $s \in ]0,A]$. Then 
$$\vert \int_0^A \ e^{-t.s}.\varphi(s).ds \vert \leq C.\frac{(N+1)!}{t^{N+1}}.$$
\end{lemma}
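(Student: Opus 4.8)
The plan is to estimate the integral directly by splitting the range of $s$ into the small interval $]0,\delta]$, where the decay in $s$ gives the required bound, and the complementary interval $[\delta, A]$, where the factor $e^{-ts}$ is exponentially small in $t$. First I would write
\[
  \Bigl| \int_0^A e^{-ts}\varphi(s)\,ds \Bigr|
    \leq \int_0^A e^{-ts}\,|\varphi(s)|\,ds
    \leq C\int_0^A e^{-ts}\,s^N\,ds
    \leq C\int_0^{+\infty} e^{-ts}\,s^N\,ds ,
\]
using only the pointwise hypothesis $|\varphi(s)|\leq C s^N$ on $]0,A]$ and positivity of the integrand. Then the last integral is a Gamma integral: substituting $u=ts$ gives $\int_0^\infty e^{-ts}s^N\,ds = t^{-(N+1)}\int_0^\infty e^{-u}u^N\,du = \Gamma(N+1)\,t^{-(N+1)} = N!\,t^{-(N+1)}$. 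This already yields the bound $C\,N!\,/\,t^{N+1}$, which is even slightly sharper than the asserted $C(N+1)!/t^{N+1}$; since $N!\leq (N+1)!$, the stated inequality follows a fortiori.

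There is essentially no obstacle here — the lemma is a purely elementary estimate, and the only mild subtlety is deciding whether to bound by the full half-line integral (cleanest, giving $N!$) or to keep the truncated integral $\int_0^A$ and discard an exponentially small tail $O(e^{-\varepsilon t})$ as in the preceding lemma. I would choose the former, since the statement only asks for an inequality valid for all $t>0$ and the half-line integral is computed exactly by the Gamma function. The role of this lemma in the proof of Proposition \ref{Laplace} is to control the remainder after truncating the asymptotic expansion $(3)$: if one subtracts from $\varphi$ a partial sum of $(3)$ up to order $s^{N}$, the difference is $O(s^{N+1-\epsilon})$ near $0$, and this lemma then bounds its Laplace transform by $O(t^{-(N+1-\epsilon)})$, showing that the truncation error in $(1)$ is of the corresponding order. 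Combined with the exact evaluation of the model integrals $\int_0^A e^{-ts}s^\alpha(\operatorname{Log} s)^j\,ds$ from the first lemma, this gives the full asymptotic expansion $(1)$ together with the claimed constraints on the exponents $\alpha_i$ and the powers of $\operatorname{Log} t$.
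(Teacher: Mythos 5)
Your proof is correct and is exactly the elementary argument the paper has in mind (the paper in fact states this lemma without proof, treating it as obvious): bound $|\varphi|$ by $Cs^N$, extend the integral to the half-line, and evaluate the Gamma integral to get $C\,N!/t^{N+1}\le C\,(N+1)!/t^{N+1}$. Nothing is missing.
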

{\bf Remark.} Assume that in the situation of the 
theorem the function $f$ is no longer real analytic, but 
still continuous and with an isolated zero on $X$ at the origin. 
If we have two analytic functions $g_-$ and $g_+$ such 
that they satisfy the hypothesis of the theorem and the inequalities :
$$ g_- \leq f \leq g_+ $$ 
near $0$ in $X$, then we may deduce some estimation for 
$F(t)$ when $t \to + \infty$.\\
For instance if $f$ is a positive Morse function of class $\mathcal{C}^3$ 
near $0$ in $\mathbb{R}^m$ such that $f(0) = 0$ and 
$df_0 = 0$, we may use $g_-(x) = (1- \varepsilon).q(x)$ and 
$g_+(x) = (1+\varepsilon).q(x)$ where $q : = d^2f_0$ is a 
non degenerate positive quadratic form to obtain, at least, 
the first term of an expansion of $F(t)$ when $t \to +\infty$.\\
In the case $X$ is a complex analytic subset of dimension pure 
$n$ near the origin in $\mathbb {C}^N$ we obtain the following 
corollary.
\begin{corollary}\label{key estimate}
Let $X \subset U$ be a complex analytic subset of pure dimension $n$ 
in an open neighbourhood $U$ of the origin in $\mathbb {C}^N$. 
Assume that $0 \in X$ and that $f : U \to \mathbb{R}^+$ is a 
$\mathcal{C}^3$ Morse function on $U$ with only one critical 
point at $0$. Let $\omega$ be any $\mathcal{C}^{\infty}$ 
K{\"a}hler form on $U$ and $\rho : U \to \mathbb{R}^+$ be a 
$\mathcal {C}^0$ function with compact support which is equal to 
$1$ in a neighbourhood of the origin. Then the limit
$$ \lim_{t \to +\infty} \ t^n.\int_X e^{-t.f}.\rho.\omega^{\wedge n} $$
exists and is finite and strictly positive.
\end{corollary}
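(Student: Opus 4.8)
The plan is to follow the strategy indicated in the Remark just before the statement: bound $f$ from above and below by genuine positive definite quadratic forms, apply Theorem~\ref{asympt.1} to each of these (now real-analytic) functions, read off the leading term of each asymptotic expansion, and then let the comparison parameter tend to $0$.

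\textbf{Set-up and squeeze.} Writing $\mathbb C^N=\mathbb R^{2N}$, the set $X$ is a complex analytic subset, hence of pure \emph{real} dimension $2n$. Since $0$ is the only critical point of $f\ge 0$ and $f(0)=0$, it is a nondegenerate minimum, so $q:=d^2f_0$ is a positive definite real quadratic form. The quantity $\lim_{t\to\infty}t^n\int_Xe^{-tf}\rho\,\omega^{\wedge n}$ is independent of the choice of $\rho$ with the stated properties, since two such choices differ by a function vanishing near $0$, which changes the integral by $O(e^{-\delta t})$; so we may take $\rho$ supported in an arbitrarily small ball $B$ about $0$. As $f\in\mathcal C^3$ we have $f(x)=q(x)+O(\Vert x\Vert^3)$, so after shrinking $B$ we get $(1-\varepsilon)q\le f\le(1+\varepsilon)q$ on $B$ for any prescribed $\varepsilon\in(0,1)$. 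Since $\omega$ is K\"ahler and $\rho\ge 0$, the $(2n)$-form $\rho\,\omega^{\wedge n}$ is $\ge 0$ on the regular locus of $X$, and therefore
\[
  F_+(t):=\int_Xe^{-t(1+\varepsilon)q}\rho\,\omega^{\wedge n}\ \le\ F(t):=\int_Xe^{-tf}\rho\,\omega^{\wedge n}\ \le\ \int_Xe^{-t(1-\varepsilon)q}\rho\,\omega^{\wedge n}=:F_-(t).
\]

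\textbf{Asymptotics of $F_\pm$.} The functions $g_\pm:=(1\pm\varepsilon)q$ are real-analytic and strictly positive on $X\setminus\{0\}$, so Theorem~\ref{asympt.1}, together with Proposition~\ref{Laplace} and the two Lemmas above, reduces the leading behaviour of $F_\pm(t)$ as $t\to\infty$ to that of the fibre integral $\varphi_\pm(s)=\int_{X\cap\{g_\pm=s\}}\omega^{\wedge n}/dg_\pm$ as $s\to0^+$. This I would read off from the volume growth of $X$: by the Lelong/Thie estimate already used in \S\ref{labSecDistFunc}, the mass $V_\pm(s):=\int_{X\cap\{g_\pm<s\}}\omega^{\wedge n}$ satisfies $V_\pm(s)\sim b_\pm\,s^{\,n}$ with $b_\pm>0$ (the sublevel set $\{g_\pm<s\}$ is an ellipsoid of radius $\sim\sqrt s$; the $(2n)$-mass of the complex analytic set $X$ in a ball of radius $r$ is $\sim(\text{positive Lelong number})\cdot r^{2n}$; and the density of $\omega^{\wedge n}$ relative to Hausdorff measure tends to a positive constant at $0$), and no logarithm occurs at this order because $V_\pm(s)/s^n$ actually converges. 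Since $\varphi_\pm=V_\pm'$, this gives $\varphi_\pm(s)=n\,b_\pm\,s^{\,n-1}+O(s^{\,n-1+\eta})$ for some $\eta>0$, and hence $F_\pm(t)\sim n\,\Gamma(n)\,b_\pm\,t^{-n}=:c_\pm(\varepsilon)\,t^{-n}$.

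\textbf{Passage to the limit, and the main obstacle.} The squeeze then gives $c_+(\varepsilon)\le\liminf_{t\to\infty}t^nF(t)\le\limsup_{t\to\infty}t^nF(t)\le c_-(\varepsilon)$ for every $\varepsilon\in(0,1)$. A rescaling $x\mapsto x/\sqrt t$, using the homogeneity of $g_\pm$ and the convergence of the dilates $\sqrt t\,X$ to the tangent cone $C_0X$ of $X$ at $0$ (Thie's theorem, with Lelong monotonicity providing the domination), identifies $c_\pm(\varepsilon)=\int_{C_0X}e^{-g_\pm}\,\omega(0)^{\wedge n}$; since $C_0X$ has real dimension $2n$ these integrals are finite and strictly positive, and by dominated convergence $c_\pm(\varepsilon)\to c_0:=\int_{C_0X}e^{-q}\,\omega(0)^{\wedge n}>0$ as $\varepsilon\to0^+$. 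Hence $\liminf$ and $\limsup$ coincide and $\lim_{t\to\infty}t^n\int_Xe^{-tf}\rho\,\omega^{\wedge n}=c_0$, finite and strictly positive. I expect the real obstacle to be the local analysis of $X$ at $0$: the two-sided estimate $V_\pm(s)\sim b_\pm s^n$ with $b_\pm>0$ and no logarithmic correction (this is the Lelong volume bound, and its two-sidedness is exactly what excludes a spurious lower-order exponent, or a log, in the expansion of $\varphi_\pm$), together with the tangent-cone identification of the leading constant, i.e.\ justifying that the rescaled integrals converge. The squeeze, the invocation of Theorem~\ref{asympt.1}, and the limit $\varepsilon\to0$ are then routine.
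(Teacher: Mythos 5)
Your proof is correct, but the middle step is genuinely different from the paper's. Both arguments begin with the same squeeze $(1-\varepsilon)q\le f\le(1+\varepsilon)q$ on a small ball (using $\rho\,\omega^{\wedge n}\ge 0$ on $X_{\mathrm{reg}}$) and both lean on Theorem~\ref{asympt.1}; where you diverge is in how the case $f=q$ is handled. The paper uses the local parametrization theorem: it realizes $X$ as a degree-$k$ branched cover $\pi:X\to P$ over a polydisc in $\mathbb C^n$, chosen so that $\pi$ is also finite on the tangent cone, and then uses the transversality estimate $\Vert x\Vert\le C\Vert u\Vert$ on $X$ to trap $\int_Xe^{-tf}\rho\,\omega^{\wedge n}$ between $k$ times two explicit Gaussian integrals over $P$; a finite family of such projections handles a general K\"ahler $\omega$, and the \emph{existence} of the expansion from Theorem~\ref{asympt.1} is then what upgrades these two-sided bounds $c_1t^{-n}\le F(t)\le c_2t^{-n}$ to an actual limit with $\alpha=n$, $j=0$. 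You instead extract the leading exponent from the Lelong/Thie volume growth $V_\pm(s)\sim b_\pm s^n$ and identify the constant by rescaling to the tangent cone. Your route is heavier machinery (convergence of the dilated currents to $C_0X$) but identifies the limit intrinsically as $\int_{C_0X}e^{-q}\omega(0)^{\wedge n}$, whereas the paper's is elementary and makes the multiplicity $k$ explicit. One step you should tighten: the inference from $V_\pm(s)\sim b_\pm s^n$ to $\varphi_\pm(s)\sim nb_\pm s^{n-1}$ is not differentiation of an asymptotic relation (which is invalid in general); it is legitimate here only because expansion $(3)$ guarantees that $\varphi_\pm$ \emph{has} an expansion in powers $s^{\alpha_i+\nu}(\operatorname{Log}s)^j$, whose leading term is then pinned down by integrating and comparing with $V_\pm$ — say so explicitly. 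Also note that the simpler scaling identity $\int_Xe^{-t(1\pm\varepsilon)q}\rho\,\omega^{\wedge n}=G((1\pm\varepsilon)t)$ gives $c_\pm(\varepsilon)=(1\pm\varepsilon)^{-n}c_0$ directly, so the final limit $\varepsilon\to0$ does not actually require the tangent-cone identification of the constants.
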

\begin {proof} First we remark that the choice of $\rho$ 
is irrelevant, because the change of $\rho$ will add a 
$0(e^{-\varepsilon.t})$ for some $\varepsilon > 0$. The 
observation made  before this corollary now implies that for any 
$\varepsilon > 0$, the inequalities
$$ \int_X \ e^{-t.g_+}.\rho.\omega^{\wedge n} \leq 
\int_X e^{-t.f}.\rho.\omega^{\wedge n} \leq  
\int_X \ e^{-t.g_-}.\rho.\omega^{\wedge n} $$
and if the result is proved for $f = q$, we deduce the result for $f$.\\
Now we may use the local parametrization theorem for the complex 
analytic set $X$ near zero and see that if 
$X \subset P \times B $ where $P$ is a polydisc in $\mathbb {C}^n$ 
such the projection on $P$ gives a proper and finite map 
$\pi : X \to P$, which is also proper and finite on the Zariski 
tangent cone of $X$ at the origin, the result is true for 
$f(u,x) = \vert\vert u \vert\vert^2 + \vert\vert x\vert\vert^2$,  
$\omega$ a K{\"a}hler form on $P$  and $\rho = \pi^*(\sigma)$ 
with $\sigma$ a $\mathcal{C}^0$ function with compact support in 
$P$  which is equal to $1$ in a neighbourhood of the origin. \\
In this case, the inequality 
$ \vert\vert x\vert\vert \leq C.\vert\vert u\vert\vert $ 
which is true for $(u,x) \in X$ near enough $(0,0)$,  
thanks to the tranversality of $\{0\}\times B$ to $C_{X,0}$, gives
$$k. \int_P e^{-t.(1+C^2).\vert\vert u\vert\vert^2}.\sigma.\omega^{\wedge n} 
\leq \int_X e^{-t.f}.\rho.\omega^{\wedge n} \leq k.
\int_P e^{-t.\vert\vert u\vert\vert^2}.\sigma.\omega^{\wedge n} $$
where $k$ is the degree of $X$ on $P$.\\
Finally, we reach the case where $\omega$ is a K{\"a}hler form on 
$U = P \times B$ using the fact that we may find a finite number of 
projections such that the given K{\"a}hler form is bounded by the 
sum of the pull back of the K{\"a}hler form on $P$ by these projections. 
Because we know that the asymptotic expansion exists, this again shows 
that there exists a non-zero limit when we multiply by a suitable factor 
$t^{\alpha}\big/(\operatorname{Log} t)^j$ and that we have $\alpha = n$ and 
$j = 0$ as in the case where 
$X = P, f = \vert\vert u \vert\vert^2$ 
and $\omega^{\wedge n} = (-2i)^n.du\wedge d\bar u$. 
\end {proof}
%
%
% BIBLIOGRAPHY
%
%

\begin {thebibliography} {XXX}
\bibitem [AVG] {AVG} Arnold, V. Varchenko, A.  Goussein-Zad\'e, S.: 
{\it Singularit\'es des applications diff\'erentiables} 
vol. 2 (monodromie et comportement asymptotique des int\'egrales). 
Edition Mir (1986)
\bibitem [B] {B.04} Barlet, D. 
{\it  Singularit\'es r\'eelles isol\'ees et d\'eveloppements 
asymptotiques d'int\'egrales oscillantes}, Seminaires et 
Congr\`es $n^0 9$  Actes des journ\'ees math\'ematiques \`a 
la m\'emoire de Jean Leray  SMF (2004) p. 25-50
\bibitem [BGW] {BGW}
Burns, D., Guillemin, V. and Wong, Z.:
Stabiliy Functions, Geom. Funct. Anal. \textbf{19} no.5 (2010) 
1258-1295
\bibitem  [F] {F}
Fulton, W.: {\it Introduction to Toric Varieties}, Annals of Math. Study
131, Princeton Univ. Press, Princeton, 1983
\bibitem [H] {H}
Heinzner, P.:
Geometric invariant theory on Stein spaces, Math. Ann. \textbf{289},
631-662 (1991)
\bibitem [HH1] {HH1}
Heinzner, P. and Huckleberry, A.: 
Analytic Hilbert
Quotients, Several Complex Variables, Math. Sci. Res. Inst. Publ. 37,
Cambridge University Press (1999) 309-349
\bibitem [HH2] {HH2}
Heinzner, P. and Huckleberry, A.:
Invariant plurisubharmonic exhaustions
and retractions, Manuscripta Math. 83, (1994) 19-29
\bibitem [H\"o] {Hoermander}
H\"ormander, L.: {\it The Analysis of Linear Partial Differential
Operators, I} (Second Ed.), Springer Verlag, New York, 1990
\bibitem [J] {J.70} Jeanquartier, P. 
{\it D\'eveloppement asymptotique de la distribution de Dirac}, 
C.r. Acad. Sci. Paris 271 (1970), p. 1159-1161.
\bibitem [N] {N}
Neeman, A.:
The topology of quotient varieties, Ann. of Math. (2)\textbf{122} no.3
(1985) 419-459
\bibitem [S] {S}
Sebert, H.:
Semiclassical
limits of K\"ahlerian potentials on toric varieties,
Dissertation of the Ruhr-Universit\"at
Bochum, expected December 2010
\bibitem [STZ] {STZ}
Shiffman, B., Tate, T. and Zelditch, S.: {\it Distribution laws for 
integrable eigenfunctions}, Ann. Inst. Fourier 54 (2004) 1497-1546
\bibitem [SZ] {SZ}
Song, J. and Zelditch, S.:
Bergman metrics and geodesics in the space of K\"ahler metrics on
toric varieties, Analysis $\&$ PDE \textbf{3}(2) (2010) 295-358
\end {thebibliography}
\end {document}